\newcommand{\red}{\textcolor{black}}
\newcommand{\blue}{\textcolor{black}}
\newcommand{\rset}{\mathbb{R}}
\newcommand{\Hf}{H_{f,U_{i_k}}}
\newcommand{\Hfi}{H_{f,U_{i}}}
\newcommand{\HF}{H_{F_{k}}}
\newcommand{\HFi}{H_{F_{i}}}
\newcommand{\HfU}{H_{f,U_{i}}}
\newcommand{\HfW}{H_{f,W}}
\newcommand{\HfV}{H_{f,V}}
\newcommand{\Hfmax}{H_{f,\max}}
\newcommand{\HFmax}{H_{F,\max}}
\newcommand{\HFmaxc}{\bar{H}_{F,\max}}
\newcommand{\Hpsimaxb}{\bar{H}_{\psi,\max}}
\newcommand{\Hp}{H_{\psi}}
\newcommand{\inas}{\overset{\text{a.s.}}{\to}}
\newtheorem{assumption}[theorem]{Assumption}
\title{Random coordinate descent  methods for nonseparable composite  optimization %\thanks{\blue{This work was first developed for the random case. The cyclic case was added during the review process}}
}
\author{Flavia Chorobura\thanks{Automatic Control and Systems
	Engineering Department, University Politehnica Bucharest, 060042
	Bucharest, Romania, \email{flavia.chorobura@stud.acs.upb.ro.}}
	\and Ion Necoara\thanks{Automatic Control and Systems
	Engineering Department, University Politehnica Bucharest, 060042
	Bucharest, Romania  and  Gheorghe Mihoc-Caius Iacob  Institute of Mathematical Statistics and Applied Mathematics of the Romanian Academy, 050711 Bucharest, Romania,  \email{ion.necoara@upb.ro.}}}
\begin{document}

\maketitle

% REQUIRED
\begin{abstract}
In this paper we consider large-scale composite  optimization problems having  the  objective function formed as a sum of two  terms (possibly  nonconvex), one has \red{(block) coordinate-wise} Lipschitz continuous gradient and the other is  differentiable but nonseparable.  Under these general settings we derive and analyze two new coordinate descent methods.  The first algorithm, referred to as coordinate proximal gradient method, considers the composite form of the objective function, while the other algorithm disregards the composite form of the objective  and uses the partial gradient of the full objective, yielding a coordinate gradient descent scheme with novel adaptive stepsize rules. We prove that these new stepsize rules make the coordinate gradient  scheme a descent method, provided that additional assumptions hold for the second term in the objective function.  We  present a complete worst-case complexity analysis for these two new methods in both, convex and nonconvex settings,  provided that the (block) coordinates are chosen random or cyclic. Preliminary numerical results also  confirm the  efficiency of our  two algorithms on practical problems.   
\end{abstract}

% REQUIRED
\begin{keywords}
 Composite minimization, nonseparable objective function,  random coordinate descent, adaptive stepsize, convergence rates.  
\end{keywords}

% REQUIRED
\begin{AMS}
90C25, 90C15, 65K05.
\end{AMS}

%%%%%%%%%%%%%%%%%%%%%%%%%%%%%%%%%%%%%%%%

\section{Introduction}
\noindent In this paper we consider solving  large-scale  composite optimization problems of the form: 
\vspace*{-0.05cm}
\begin{equation}
\label{eq:prob}
F^* = \min_{x \in \mathbb{R}^n} F(x)  \quad \left(:= f(x) + \psi(x)\right),
\vspace{-0.05cm}
\end{equation}
where the function $f: \mathbb{R}^n  \to \mathbb{R}$  has block coordinate-wise Lipschitz gradient   and $\psi : \mathbb{R}^n  \to {\mathbb{R}}$ is twice  continously  differentiable function (both terms are possibly \textit{nonseparable} and  \textit{nonconvex}).  Optimization problems having this composite structure permit to handle general coupling functions $\psi$ (e.g., $\psi(x) = \|Ax\|^p$, with A linear operator and $p \geq 2$) and arise in many applications such as  distributed control, signal  processing,  machine learning,  network flow problems and other areas \cite{Ber:99,Mit:97,NecCli:13}. Despite  the bad properties of the sum (nonsmoothness), such problems, both in convex and nonconvex cases, can be solved by full gradient or Newton methods with the efficiency typical for the good (smooth) part  of the objective  \cite{NesPol:06}.  However, for large-scale problems,  the usual methods based on full gradient and Hessian  computations are prohibitive. In this case, it appears that a reasonable approach to solve such problems   is to use  (block) coordinate descent methods. 

\medskip 

\noindent \textit{State of the art}.  Coordinate (proximal) gradient  descent   methods   \cite{BecTet:13, BolSab:14,FerRic:15,LuXia:14,Nes:10,NecCli:16,NecTak:20,NecCho:21,NesSti:17, RicTak:11,TseYun:09}, see also the recent survey \cite{Wri:15}, gained attention in optimization in the last years due to their fast convergence and small computational cost per iteration.  The main differences in all variants of coordinate descent algorithms consist in the way we define the local approximation
function over which we optimize  and the criterion of choosing at each iteration the coordinate over which we minimize this local approximation.  For updating one (block) variable, while keeping the other variables fixed,  two basic choices for the local approximation   are usually considered: (i) exact approximation function, leading to \textit{coordinate minimization methods} \cite{Bec:14, GriSci:00} and (ii)  quadratic approximation function, leading to \textit{coordinate gradient descent methods} \cite{Nes:10,TseYun:09, Wri:15}.  Furthermore, three criteria for choosing the coordinate search  used often in these algorithms are the greedy, the cyclic and the random coordinate search, respectively. For cyclic coordinate search convergence rates have been given recently in  \cite{Bec:14, BecTet:13}. Convergence rates for coordinate descent methods based on the Gauss-Southwell rule  were derived  in \cite{TseYun:09}. Another interesting approach is based on random coordinate descent, where the coordinate search is random. Complexity results on random coordinate descent methods were obtained  in \cite{Nes:10}  for smooth convex functions. The extension to composite objective functions  were given in \cite{FerRic:15, NesSti:17, NecCli:16,  RicTak:11}. \red{However, these  papers studied optimization models where the second term, usually assumed nonsmooth, is separable,  i.e., $\psi(x) =\sum_{i=1}^n \psi_i(x_i)$, with $x_i$ is the $i$th  component of $x$.  In the sequel, we discuss papers that consider the case $\psi$  nonseparable and explain the main \blue{differences} with our present work. }

\medskip 

\noindent \blue{\textit{Previous work}}.  From our knowledge there exist  few studies  on  coordinate descent methods when the second term in the objective function is nonseparable. For example,  \cite{Nec:13,NecTak:20,TseYun:09} considers  the composite optimization problem \eqref{eq:prob}  with $\psi$ convex and  separable (possibly nonsmooth) and the additional  nonseparable constraints $Ax=b$.  Hence,   nonseparability comes from the linear constraints.  In these settings, \cite{Nec:13,NecTak:20,TseYun:09}  proposed coordinate proximal gradient descent methods that require solving at each iteration  a subproblem over a  subspace generated by the matrix $U \in \mathbb{R}^{n \times p}$ using  a part of the gradient of $f$ at the current feasible  point $x$, $\nabla f(x)$, i.e:   
	\vspace*{-0.2cm}
\begin{equation*}
\min_{d \in \mathbb{R}^p} f(x) + \langle  U^T \nabla f(x),d \rangle + \frac{1}{2} d^T H_U d + \psi(x) \quad \text{s.t.} \quad AUd=0,
	\vspace*{-0.2cm}
\end{equation*}
where $H_U$ is an appropriate positive definite  matrix  and then update $x^+ = x + Ud$.  The matrix $U$ is chosen according to some greedy rule or random. For these algorithms  sublinear rates are derived in the (non)convex case and linear convergence is obtained for  strongly convex objective. Further, for  problem    \eqref{eq:prob}, with $\psi$ possibly nonseparable and nonsmooth,  \cite{GriIut:21,HanKon:18, LatThe:21} consider proximal coordinate descent  methods  of the form:
	\vspace*{-0.2cm} 
\begin{equation}
\bar{x}^{+} \in  \text{prox}_{\alpha \psi } (\mathcal{C}(x - \alpha \nabla f(x))), 
	\vspace*{-0.2cm}
\end{equation}  
\noindent where $\mathcal{C}( \cdot)$   is a correction map  corresponding to the chosen random subspace at the current iteration in \cite{GriIut:21,HanKon:18} and is the identity map in \cite{LatThe:21}.  Moreover,  \cite{GriIut:21,HanKon:18} assume $\psi$ convex and update $x^{+} = \bar{x}^{+}$, while in \cite{LatThe:21} $\psi$ is possibly nonconvex and updates   $x_{i}^{+} = \bar{x}_{i}$  for all $i \in \mathcal{I}  \subseteq [n]$ and keeps the rest of the components  unchanged.  Note that  \cite{GriIut:21,HanKon:18}   use only  a sketch of  the gradient   $\nabla f(x)$ on the selected subspace, while in  \cite{LatThe:21}  $f$ is assumed separable.  
%\blue{ Other work for $\psi$ nonseparable is  \cite{FerBia:19}, where  structured problems are considered of the form: 
%		\vspace*{-0.2cm}
%\begin{equation*}
%	\min_{x \in \mathbb{R}^n} f(x) + \psi(x) + h(Ax),
%		\vspace*{-0.2cm}
%\end{equation*}
%\noindent with $f$,$\psi$ and $h$ convexs functions, such that 
%$\psi$ and $h$ are possibly nonsmooth and  $f$ is smooth. For solving this problem, a randomized coordinate descent primal-dual algorithm is proposed where at each iteration  a block of components of the prox of the function    $\psi$,  $\text{prox}_{\alpha \psi }$, must be computed.  }
\red{Since in these papers \cite{GriIut:21,HanKon:18,LatThe:21},  one needs to compute at each iteration a block of components of the full prox of the nonseparable function $\psi$,  $\text{prox}_{\alpha \psi }$,  this can be can done efficiently  when this prox  can be evaluated easily based on the previously computed prox  and providec that only a block of coordinates are modified in the prior iteration.}
For the  algorithms in \cite{GriIut:21,HanKon:18}  linear convergence is derived, provided that the objective function  is  strongly convex. Linear convergence results were also obtained in \cite{LatThe:21}  when the objective function satisfies the Kurdyka-Lojasiewicz  condition.  Recently, \cite{AbeBec:21} considers  problem \eqref{eq:prob},  where the function  $f$ is assumed quadratic and convex,  while $\psi$ convex   function (possibly nonseparable and nonsmooth).  Under these settings,  \cite{AbeBec:21}  combines  the forward-backward envelope (to  smooth the original problem)  with an accelerated coordinate gradient descent method and derives sublinear rates for the proposed scheme. \red{ This method also makes sense when the full prox can be computed efficiently under coordinate descent updates. The main difference between  our work and  \cite{GriIut:21,HanKon:18,LatThe:21,AbeBec:21} is that in our first algorithm we consider a prox along coordinates,  while in the other papers  one needs to compute a block of components of the full prox. Moreover, in the second algorithm our search direction is based on the partial gradient of the full objective function. }
	
\medskip 

\noindent The paper most related to  the first algorithm  is  \cite{NecCho:21}.  More precisely,  in  \cite{NecCho:21} at each iteration one needs to sketch  the gradient   $\nabla f$  and compute the  prox of $\psi$  along some subspace  generated by the random matrix $U \in \mathbb{R}^{n\times p}$, that is:
	\vspace*{-0.1cm}
\begin{equation}  
\label{eq:fullprox1}	
 x^{+} = \text{prox}_{ H_{f,U}^{-1}  \phi}  \left( - H_{f,U}^{-1} \;  U^{T} \nabla f(x) \right),  	
 	\vspace*{-0.2cm}
\end{equation} 
where $\phi(d)  =  \psi(x + Ud)$.  Assuming that $\psi$ is twice differentiable,  (sub)linear convergence rates are derived in \cite{NecCho:21} for both convex and nonconvex settings. However, depending on the properties of the random matrix $U$, in each iteration we need to update a block of components of  $x$, whose dimension $p$, in some cases, \blue{may depend on $n$.}
%can be even of dimension  $p=\mathcal{O}(\sqrt{n})$. 
In this paper we also design for the  composite problem \eqref{eq:prob}  a random coordinate proximal gradient method of the form \eqref{eq:fullprox1}	 that  uses a block of components of the gradient   $\nabla f$ and requires  the computation of the  prox of $\psi$  along these coordinates. However, in this algorithm we do not have restrictions on the subspace dimension, in the extreme case we can update only one component of $x$.  In this paper, we also propose a  second algorithm,  which contrary to the usual approach from literature, disregards the composite form of the objective function and makes an update based on  the partial gradient of the full objective function:
	\vspace*{-0.1cm}
\begin{equation}
	\label{eq:fullprox2}	
x^{+} = x - H_{F,U}^{-1}   U^T \nabla F(x).
	\vspace*{-0.1cm}
\end{equation}
\noindent We propose several new adaptive stepsize rules, $H_{F,U}$, based on some additional assumptions on the second term $\psi$. 

\medskip 

\noindent \textit{Contributions}. This paper deals with large-scale composite  optimization problems of the form  \eqref{eq:prob}. We present  two coordinate descent methods,  \eqref{eq:fullprox1}	and 	\eqref{eq:fullprox2},  and derive convergence rates when the (block) coordinates are chosen  random or   cyclic.  More precisely, our  contributions are:\\
(i) We introduce a coordinate proximal gradient method,  \eqref{eq:fullprox1}, which takes into account the nonseparable composite form of the objective function. In each iteration,  one needs to compute a block of components of the gradient $\nabla f$, followed by the prox of $\psi$ along this block of coordinates. Note that typically, the prox restricted to some subspace leads to much less computations than the full prox. \\
(ii) We also present a coordinate gradient method, 	\eqref{eq:fullprox2}, which requires at each iteration the computation of a block of components of the gradient of the full objective function, i.e., $U^{T}\nabla F$. We propose new stepsizes strategies for this method, which guarantees descent and convergence under certain assumptions on $\psi$.  In particular, three of these stepsize rules are \emph{adaptive} and require computation of  a positive root of a polynomial, while the last one can be chosen constant.   \\
(iii)  We derive sufficient conditions for the iterates of our algorithms to be bounded. We also prove that our algorithms are descent methods and derive  sublinear convergence rates,  provided that the (block) coordinates are chosen  random or cyclic, in the convex and nonconvex settings.  Improved rates are given under Kurdyka-Lojasiewicz (KL) property, i.e., sublinear or linear depending on the KL parameter. The convergence rates obtained in this paper are summarized in Table \ref{table:rates}.  Since  uniform convex  functions satisfy  KL property, our rates also cover this~case. 

\medskip 
	
\noindent 	Note that in this paper we perform a full convergence  analysis for a  random coordinate descent algorithm  for solving general  (non)convex composite problems and most of our variants of coordinate descent schemes  were never explicitly considered in the literature before. \red{Although our algorithms belong to the class of coordinate gradient descent methods, our  convergence results are also of  interest when $f \equiv 0$ and $\psi$ nonseparable (in this case our first algorithm can be viewed as a proximal regularization of a multi-block Gauss-Seidel method)}. In particular, this is the first work where  convergence bounds are presented for an exact coordinate minimization (Gauss-Seidel) method, i.e., when  $f \equiv 0$,   and for a   coordinate gradient descent method, i.e., when the full function $F$ doesn't have coordinate-wise Lipschitz gradient, in both convex and nonconvex~settings. Recall  that 
if $\psi$ is nonseparable, coordinate descent methods may not converge (see e.g., the counterexamples in \cite{FriHasHofTib} for  nonseparable nondifferentiable convex problems   and in \cite{Ber:99,Pow:73} for nonseparable nonconvex problems, even in the differentiable case).  These results motivate us to consider $\psi$ twice differentiable.
  \vspace*{-0.2cm}
 
\begin{footnotesize}
	\begin{table}[h!]
		\begin{center}
		\begin{tabular}{|p{1.7cm}  |p{4.4 cm}| p{0.8cm}| p{3.1cm}| p{0.8cm}|}
			\hline
			\multicolumn{5}{|c|}{Random} \\
			\hline
			%& &  & &  \\
		   \multirow{1}{*}{Nonconvex} & \begin{footnotesize}
		   	 \red{$\min\limits_{i=0:k-1} \! \mathbb{E} \left[ \|\nabla F(x_{i})\|\right]  \!\leq\! \mathcal{O} \! \left(\!Nk^{-\frac{1}{2}} \!\right)$} \end{footnotesize}  &  \small{Rem.} \ref{rem:gradR}  &  \multirow{2}{3.1cm}{\begin{center} \vspace*{-0.3cm} \begin{small} $\forall \delta>0$,  with prob. $1-\delta$: $\mathbb{E} \left[ F(x_k) \right] \to F_*$  sublinearly or linearly  \end{small}\end{center}} & \multirow{4}{0.8cm}  {\begin{center}
		   		 	\vspace{-0.3cm}  
		   		 	\small{Thm.} \ref{theo:KL1}
		   		 \end{center}
		   }  \\
		\cline{1-3}
			Convex  & \red{ \(\displaystyle \mathbb{E} \left[ F(x_k)\right]  - F^* \!\leq\! \mathcal{O}\left(Nk^{-1}\right) \) }  & \small{Thm.} \ref{the:conv}   & & \\
			\hline
			\multicolumn{5}{|c|}{Cyclic} \\
			\hline
			%& &  & &  \\
			\multirow{1}{*}{Nonconvex} & \begin{small}  \red{$\min\limits_{i=0:k-1} \!   \|\nabla F(x_{i})\|  \!\leq\! \mathcal{O} \! \left(\! N^2k^{-\frac{1}{2}} \!\right)$} \end{small}  &  \small{Rem.} \ref{rem:gradC}  &  \multirow{4}{3.1cm}{\begin{center} \vspace*{-0.2cm} \begin{small}  $F(x_{kN}) \to F_*$  sublinearly, linearly \red{or superlinearly} \end{small}\end{center}} & \multirow{4}{0.8cm}  {\begin{center}
					\vspace{-0.3cm} \small{Thm.}  \ref{the:KLC}
				\end{center}
			}\\
			\cline{1-3}
			Convex  &  \red{ \(\displaystyle  F(x_{kN}) - F^* \!\leq\! \mathcal{O}\left(Nk^{-1}\right) \) }  & \small{Thm.}  \ref{the:conC}   & & \\
			\hline
		\end{tabular}
	\end{center}
	\caption{Convergence rates derived in this paper for the algorithms \eqref{eq:fullprox1}	and 	\eqref{eq:fullprox2}.}
\label{table:rates}
\end{table}
\end{footnotesize}

\vspace{-0.5cm}

\noindent \textit{Content}. The paper is organized as follows. In Section 2 we present some  preliminary results. We derive in Section 3 the  coordinate proximal gradient algorithm, while in Section 4  the  coordinate gradient  algorithm. In Section 5 we present sufficient conditions for the iterates of our algorithms  to be bounded. The convergence rates  in the random and cyclic cases are derived in Section 6 for the nonconvex case and in Section 7 for the convex case. Finally, in Section 8 we provide detailed numerical~simulations.

%%%%%%%%%%%%%%%%%%%%%%%%%%%

\section{Preliminaries}
\noindent In this section we present some definitions,  some preliminary results  and our basic assumptions for the composite problem  \eqref{eq:prob}.
\subsection{Assumptions/setup}  
\noindent We consider the following problem settings.  Let $U \in \mathbb{R}^{n\times n}$ be  a column permutation of the  identity matrix $I_n$ and further let $U = [U_{1},...,U_{N}]$ be a decomposition of $U$ into $N$ submatrices, with $U_{i} \in \mathbb{R}^{n \times n_{i}}$ and  $\sum_{i = 1}^{N}n_{i} = n$.  Hence, any  $x \in \rset^n$ can be written  as $x = \sum_{i = 1}^{N} U_{i} x^{(i)}$, where $x^{(i)} = U_{i}^{T}x \in \mathbb{R}^{n_{i}}$. 
Throughout the paper the following assumptions hold:
\begin{assumption}
	\label{ass1}
	For composite optimization problem \eqref{eq:prob} the following hold: \\
%	\begin{itemize} \item []
A.1:  Gradient of $f$ is block coordinate-wise Lipschitz continuous with constants $L_i$:
		\begin{align}
		\label{lip1}
		\|U_{i}^{T}(\nabla f(x + U_{i} h) - \nabla f(x)) \| \leq L_{i} \|h\| \quad  \forall h \in \mathbb{R}^{n_{i}},  x \in \mathbb{R}^{n}, i = 1:N.
		\end{align}
%		\item []
A.2:  $\psi$ is twice continuously differentiable (possibly nonseparable and nonconvex). \\
%		\item[]
A.3: A solution exists for \eqref{eq:prob}  (hence, the optimal value $F^* >- \infty$). 
%	\end{itemize}
\end{assumption}	

\noindent If  Assumption \ref{ass1}[A1] holds, then we have the relation \cite{Nes:10}:
\vspace*{-0.01cm}
\begin{equation}
\label{lip2}
|f(x+U_{i} h) - f(x) - \langle U_{i}^{T} \nabla f(x), h \rangle| \leq \frac{L_{i}}{2} \|h\|^2 \quad \forall h \in  \mathbb{R}^{n_{i}}, \quad i = 1,\cdots,N.
\vspace*{-0.1cm}
\end{equation}

\noindent The basic idea of our algorithms consist of choosing $i \in \{1,...,N\}$ uniformly at random or cyclic and update $x\in\mathbb{R}^{n}$ as follows: 
$
x^+ = x+ U_{i}d. 
$ We consider two choices for the directions $d$. In Coordinate Proximal Gradient (CPG) algorithm, the direction $d$ is computed by a proximal operator of $\psi$ restricted to the subspace $U_{i}$. In Coordinate Gradient Descent (CGD) algorithm, $d$ is given by a multiple of a block of components of the gradient $\nabla F(x_{k})$.

\begin{definition}
\label{definition:3}
For any fixed $x \in \mathbb{R}^{n}$ and $i =1:N$ denote $\phi^{x}_{i}:\mathbb{R}^{n_{i}}\to \mathbb{R}$ as:
\vspace*{-0.1cm}
\begin{equation}
\phi^{x}_{i}(d) = \psi(x + U_{i}d). \label{eq:phi}
\vspace*{-0.1cm}
\end{equation}
 We say that the function $\psi:\mathbb{R}^{n}\to \mathbb{R}$ is convex along coordinates if the partial functions $\phi^{x}_{i}:\mathbb{R}^{n_{i}}\to \mathbb{R}$ are convex for all $x \in \mathbb{R}^{n}$ and $i=1:N$.
\end{definition}

\noindent  One can easily notice that there are nonconvex functions $\psi$ which are convex along coordinates.	 Note that if $\psi$ is twice differentiable, then it is convex along coordinates if $U_{i}^{T}\nabla^2 \psi(x) U_{i}$ is positive semidefinite matrix for any $x$ and $U_{i} \in \mathbb{R}^{n\times n_{i}}$, with $i=1:N$.
  
\noindent Below, we  use the following   mean value inequality (see Appendix for a proof). 
\begin{lemma}
	\label{lemma:MVI}
	  Let $G:  \mathbb{R}^{n} \to \mathbb{R}^{m}$ be a continuously differentiable function and $J:  \mathbb{R}^{n} \to \mathbb{R}^{m\times n}$ be its Jacobian. Consider $U\in\mathbb{R}^{n\times r}$ a fixed matrix  and $x, x+Ud \in \mathbb{R}^{n}$, with $d \in \mathbb{R}^{r}$. Then,  there exists $y  \in  [x,x+Ud]$ such that:
	  \vspace*{-0.2cm}
	\begin{equation*}
		\|G(x+Ud) - G(x)\| \leq \|J(y)U\| \|d\|.
	\end{equation*}
\end{lemma}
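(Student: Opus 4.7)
The plan is to reduce the vector-valued statement to a one-dimensional integral along the segment and then invoke continuity on the compact interval to replace an integral average by a pointwise maximum.

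First, I would parametrize the segment by defining $g:[0,1]\to\mathbb{R}^m$ via $g(t)=G(x+tUd)$. Since $G$ is continuously differentiable, the chain rule gives $g'(t)=J(x+tUd)Ud$, and the fundamental theorem of calculus (applied componentwise) yields
\[
G(x+Ud)-G(x)=\int_0^1 J(x+tUd)\,Ud\;dt.
\]
Taking norms and using the standard inequality for the norm of a vector-valued integral, followed by submultiplicativity of the operator norm, I obtain
\[
\|G(x+Ud)-G(x)\|\;\leq\;\int_0^1 \|J(x+tUd)\,Ud\|\,dt\;\leq\;\|d\|\int_0^1 \|J(x+tUd)\,U\|\,dt.
\]

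Next, because $G$ is $C^1$, the map $t\mapsto\|J(x+tUd)U\|$ is continuous on $[0,1]$ (composition of the continuous map $J$ with the operator norm and right multiplication by $U$). As $[0,1]$ is compact, this function attains its maximum at some $t^\star\in[0,1]$. Setting $y=x+t^\star Ud$, which belongs to the segment $[x,x+Ud]$, the integral is bounded above by $\|J(y)U\|$, so
\[
\|G(x+Ud)-G(x)\|\;\leq\;\|J(y)U\|\,\|d\|,
\]
which is the claimed inequality.

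I do not expect any serious obstacle here: the main ingredients are the integral form of the fundamental theorem for $C^1$ vector-valued maps along a line, the triangle inequality for integrals, and the extreme value theorem on the compact interval $[0,1]$. The only minor point to be careful about is that one must factor out $\|d\|$ via the operator norm (rather than try to identify a point where $\|J(y)Ud\|$ itself attains the bound), so that the statement matches the form $\|J(y)U\|\|d\|$ required in the lemma.
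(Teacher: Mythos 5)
Your proof is correct, but it takes a different route from the paper's. The paper projects onto the unit vector $u = (G(x+Ud)-G(x))/\|G(x+Ud)-G(x)\|$, applies the scalar mean value theorem to $t \mapsto \langle G(x+tUd), u\rangle$ to produce a single point $\bar t$ with $\langle G(x+Ud)-G(x), u\rangle = \langle J(x+\bar t Ud)Ud, u\rangle$, and then uses Cauchy--Schwarz; the intermediate point $y$ is thus the one delivered by the mean value theorem. You instead write the increment as $\int_0^1 J(x+tUd)\,Ud\,dt$, bound the norm of the integral by the integral of the norm, and then invoke the extreme value theorem to dominate the average of $t\mapsto\|J(x+tUd)U\|$ by its maximum at some $t^\star$, taking $y = x+t^\star Ud$. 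Both arguments are valid under the stated $C^1$ hypothesis. Your version avoids the (harmless but unstated) degenerate case $G(x+Ud)=G(x)$, where the paper's normalizing vector $u$ is undefined and the inequality must be checked trivially; on the other hand, the paper's argument needs only differentiability of $G$ along the segment rather than continuity of $J$, whereas your use of the integral representation and of the extreme value theorem genuinely relies on $J$ being continuous. Since the lemma assumes $G$ is continuously differentiable, this costs nothing here, and your remark about factoring out $\|d\|$ through the operator norm before locating $y$ is exactly the right point of care.
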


\subsection{KL property}
Let us recall the definition of the Kurdyka-Lojasiewicz (KL) property for a function, see e.g.,  \cite{BolDan:07}. Note that the KL property is defined for general functions (possibly nondifferentiable).  Below, we adapt this definition to the differentiable case, since in this paper we consider  only  differentiable objective  functions.

\begin{definition}
	\label{def:kl}
	\noindent A differentiable  function $F$ satisfies  KL property on a compact set $\Omega$ on which $F$ takes a constant value $F_*$ if there exist $\gamma, \epsilon >0$ such that   one has:
	\vspace*{-0.2cm}
	\begin{equation*}
	\kappa' (F(x) - F_*)\| \nabla F(x) \|  \geq 1  \quad   \forall x\!:  \text{dist}(x, \Omega) \leq \gamma, \;  F_* < F(x) < F_* + \epsilon,  
	\vspace*{-0.1cm}
	\end{equation*}
	where $\kappa: [0,\epsilon] \to \mathbb{R}$ is a concave differentiable function satisfying $\kappa(0) = 0$ and  $\kappa'>0$.  
\end{definition}

\noindent The KL property  holds for semi-algebraic functions (e.g., real polynomial functions), vector or matrix (semi)norms (e.g., $\|\cdot\|_p$ with $p \geq 0$ rational number), logarithm functions,  exponential functions and  uniformly convex functions,  see \cite{BolDan:07} for a comprehensive list.

%%%%%%%%%%%%%%%%%%%%%%%%%%%%%%%%%
%%%%%%%%%%%%%%%%%%%%%%%%%%%%%%%

\section{A coordinate proximal gradient algorithm}
\noindent  In this section we assume that the function $\psi$ is simple, i.e.,  $\psi$ restricted to  any subspace generated by $U_{i} \in \mathbb{R}^{n\times n_{i}}$ is proximal easy.    For minimizing the composite  problem \eqref{eq:prob},   where $f$ and $\psi$ are  possibly nonseparable and nonconvex, we propose    a pure  coordinate proximal gradient algorithm that requires some block of components of the gradient $\nabla f(x)$ and computes the  prox of $\psi$ also along these block of coordinates.  Hence, our Coordinate Proximal Gradient (CPG) algorithm is as follows:

\begin{center}
	\noindent\fbox{%
		\parbox{12cm}{%
			\textbf{Algorithm 1 (CPG)}:\\
			Given a starting point $x_{0} \in \rset^n$.  For $k \geq 0$ do: \\
			1. \; Choose $i_{k} \in \{1,...,N\}$ uniformly at random or cyclic and $\eta_{i_{k}}>0$. Set:
			\begin{equation}
			\label{eq:H}
			\Hf = \left\lbrace\begin{array}{ll} \dfrac{L_{i_{k}} + \eta_{i_{k}}}{2} \; 
			\text{ if } \psi \; \text{convex along  coordinates} \\
			L_{i_{k}} + \eta_{i_{k}} \; \text{ otherwise}  
			\end{array}\right.   
			\end{equation}			
			2. \;  Find $d_k$ solving the following subproblem:
			\vspace*{-0.3cm}
			\begin{equation}
			\label{eq:subproblem}
			\;\; d_{k} \in  \arg \min_{d \in \mathbb{R}^{n_{i_{k}}}} f(x_{k}) +  \langle U^{T}_{i_k} \nabla f(x_{k}), d \rangle + \dfrac{\Hf}{2} \|d\|^2  + \psi(x_{k} + U_{i_k}d)	
			\vspace*{-0.1cm}
			\end{equation}
			3. \; 	Update $x_{k+1} = x_{k} + U_{i_k}d_{k}$.
	}}
\end{center}

\medskip
\noindent Note that for $U_{i_k} = I_{n}$, CPG recovers the \textit{full} proximal gradient method, algorithm (46) in \cite{Nes:19}, while for $f \equiv 0$ we get a Gauss-Seidel type algorithm similar to \cite{GriSci:00}.   However,  \cite{Nes:19} derives rates only in the convex settings, while  there are very few results ensuring that the iterates of a Gauss-Seidel method converges to a global minimizer, even for strictly convex functions, e.g., \cite{GriSci:00} presents only assymptotic convergence results. In this paper we derive convergence rates for the general algorithm CPG in both convex and nonconvex settings. An important fact concerning our approach is that the convergence of  CPG  works for any  $\eta_{i_k}$ greater than a fixed positive parameter which can be chosen arbitrarily small. In particular, in  CPG  we can choose a larger stepsize when $\psi$ is convex along coordinates (see Definition \ref{definition:3}), since $\Hf$ must  satisfy in this case  $\Hf> L_{i_{k}}/2$. \red{ When $\psi$ is $\rho$-weakly convex along coordinates,  the subproblem \eqref{eq:subproblem} is convex, provided that $\HfU \geq \rho$}. Our algorithm requires computation of the  proximal operator only of the \textit{partial} function $ \phi_{i_{k}}^{x_{k}}$ (defined in \eqref{eq:phi}) at $U^{T}_{i_k} \nabla f(x_{k})$:
\vspace*{-0.1cm}
\begin{equation}
	\label{prox3.2}  
	d_k\in  \text{prox}_{ \Hf^{-1}  \phi_{i_{k}}^{x_{k}}}  \left( - \Hf^{-1} \;  U^{T}_{i_k} \nabla f(x_{k}) \right). 
	\vspace*{-0.1cm} 	  
\end{equation} 

\noindent  Regardless of the properties of the two functions $f$ and $\psi$, the subproblem  \eqref{eq:subproblem} in CPG is convex provided that $\psi$ is (weakly) convex along coordinates and then the  prox operator 	\eqref{prox3.2}   is well-defined (and unique) in this case, while for general nonconvex $\psi$, the prox 	\eqref{prox3.2}  has to be interpreted as a point-to-set mapping. The proximal mapping is available in closed form for many useful functions, e.g., for  norm power $p$ regularizers. Note that the prox restricted to some subspace  (as required in CPG) is much less expensive computationally than the full prox (as required in the literature  \cite{GriIut:21,HanKon:18, LatThe:21,Nes:19}). \noindent More precisely,  if $\psi$ is differentiable,  then solving the subproblem e.g., in the full proximal gradient method (algorithm (46) in \cite{Nes:19}), is equivalent to finding a full vector $d_{k} \in \mathbb{R}^{n}$ satisfying the system of $n$ nonlinear equations:
	\vspace*{-0.1cm}
	\begin{equation}
		\nabla f(x_{k}) + \nabla \psi(x_{k} + d_k) +H_f d_{k}  = 0. \label{systEq}
		\vspace*{-0.1cm}
	\end{equation}
	
	\noindent  On other hand, when $N=n$ and $U_{i} = e_{i}$, where $e_{i}$ is the $i$th vector of the canonical basis of $\mathbb{R}^{n}$,   at each iteration of  our algorithm CPG,  solving the subproblem \eqref{eq:subproblem} is \red{equivalent} to finding a scalar $d_{k} \in \mathbb{R}$ satisfying the scalar nonlinear equation:
	\vspace*{-0.1cm}
	\begin{equation}
		e^{T}_{i_k} (\nabla f(x_{k}) + \nabla \psi(x_{k} + d_k e_{i_k})) +\Hf d_{k}  = 0.   \label{oneEq}
		\vspace*{-0.1cm}
	\end{equation}
	\noindent  Clearly, there are very efficient methods for finding the root of a scalar equation  \eqref{oneEq},  while it can be  more difficult  to solve the  system of nonlinear equations \eqref{systEq}.

\medskip 

\noindent Next,  we prove that algorithm CPG is a descent method provided that the smooth function $f$ is nonconvex and nonseparable,  and $\psi$ is simple, but possibly nonseparable, nonconvex and twice differentiable.  
Let us denote:
\vspace*{-0.1cm}
\begin{equation}
%\vspace*{-0.1cm}
\eta_{\min} = \min_{i_k=1:N } \eta_{i_{k}} \quad \text{and} \quad \Hfmax  =   \max_{i_k=1:N} \Hf. \label{eq:etaH}
\end{equation}

\begin{lemma}
	\label{lem1}
	If  Assumption \ref{ass1} holds, then   iterates of CPG satisfy the descent:
	\vspace*{-0.1cm}
	\begin{align}
	\label{descent}
	F(x_{k+1}) \leq F(x_{k}) - \dfrac{\eta_{\min}}{2} \|d_{k}\|^2  \quad \forall k \geq 0. 
	\end{align}
\end{lemma}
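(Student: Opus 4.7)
The plan is to combine the block coordinate‐wise Lipschitz descent lemma for $f$ with the minimality of $d_k$ in the subproblem \eqref{eq:subproblem}. Since $x_{k+1}=x_k+U_{i_k}d_k$, relation \eqref{lip2} gives
\begin{equation*}
f(x_{k+1}) \leq f(x_k) + \langle U_{i_k}^{T}\nabla f(x_k), d_k\rangle + \frac{L_{i_k}}{2}\|d_k\|^2,
\end{equation*}
and adding $\psi(x_{k+1}) = \psi(x_k+U_{i_k}d_k)$ on both sides yields an upper bound for $F(x_{k+1})$ in terms of the objective of the subproblem \eqref{eq:subproblem} evaluated at $d_k$ (with the quadratic coefficient replaced by $L_{i_k}/2$ instead of $\Hf/2$).

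Next I would use the choice of $d_k$ as a minimizer of \eqref{eq:subproblem}, compared against the trivial point $d=0$. I would split into the two regimes given by the definition \eqref{eq:H}. In the general (possibly nonconvex) case the simple inequality ``subproblem value at $d_k$ $\le$ subproblem value at $0$'' gives
\begin{equation*}
\langle U_{i_k}^{T}\nabla f(x_k), d_k\rangle + \psi(x_k+U_{i_k}d_k) \leq \psi(x_k) - \frac{\Hf}{2}\|d_k\|^2.
\end{equation*}
Plugging this into the previous display and using $\Hf = L_{i_k}+\eta_{i_k}$ produces
\begin{equation*}
F(x_{k+1}) \leq F(x_k) - \frac{\Hf - L_{i_k}}{2}\|d_k\|^2 = F(x_k) - \frac{\eta_{i_k}}{2}\|d_k\|^2.
\end{equation*}
In the second regime, where $\psi$ is convex along coordinates, the subproblem objective is $\Hf$-strongly convex (its $L_{i_k}$-smooth part is already quadratic with constant $\Hf$, and the composite part $d\mapsto\psi(x_k+U_{i_k}d)$ is convex). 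Strong convexity together with the optimality of $d_k$ upgrades the previous bound to
\begin{equation*}
\langle U_{i_k}^{T}\nabla f(x_k), d_k\rangle + \psi(x_k+U_{i_k}d_k) \leq \psi(x_k) - \Hf\|d_k\|^2,
\end{equation*}
so that
\begin{equation*}
F(x_{k+1}) \leq F(x_k) - \Bigl(\Hf - \tfrac{L_{i_k}}{2}\Bigr)\|d_k\|^2 = F(x_k) - \frac{\eta_{i_k}}{2}\|d_k\|^2,
\end{equation*}
using now $\Hf = (L_{i_k}+\eta_{i_k})/2$.

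In both cases the per-iteration decrease is at least $(\eta_{i_k}/2)\|d_k\|^2$, and bounding $\eta_{i_k}$ below by $\eta_{\min}$ as defined in \eqref{eq:etaH} gives the claimed inequality \eqref{descent}. The only delicate point is matching the coefficient $\Hf$ to the correct regime in \eqref{eq:H}: the nonconvex‑$\psi$ setting needs the extra $L_{i_k}/2$ to absorb the Lipschitz term (hence the larger $\Hf=L_{i_k}+\eta_{i_k}$), whereas the convex‑along‑coordinates setting relies on strong convexity of the subproblem, which effectively doubles the available quadratic term and therefore permits the smaller choice $\Hf=(L_{i_k}+\eta_{i_k})/2$. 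No further structural assumption on $\psi$ or on the random/cyclic selection rule is needed for this pointwise descent.
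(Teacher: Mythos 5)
Your proof is correct and follows essentially the same route as the paper: the coordinate-wise Lipschitz bound \eqref{lip2} for $f$, followed by comparing the subproblem objective at $d_k$ against $d=0$, with the two regimes of \eqref{eq:H} handled exactly as in the paper's argument. The only cosmetic difference is that in the convex-along-coordinates case you invoke $\Hf$-strong convexity of the subproblem at its minimizer, while the paper uses the first-order variational inequality evaluated at $d=0$; both yield the identical intermediate bound $\langle U_{i_k}^{T}\nabla f(x_k), d_k\rangle + \psi(x_{k+1}) \leq \psi(x_k) - \Hf\|d_k\|^2$.
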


	\vspace{-0.2cm}
	
\begin{proof}
	Using Assumption \ref{ass1} and inequality \eqref{lip2}, we obtain:
	\vspace*{-0.2cm}
	\begin{align}
	f(x_{k+1}) + \psi(x_{k+1}) \leq f(x_{k}) + \langle \nabla f(x_{k}), U_{i_k}d_{k} \rangle + \frac{L_{i_{k}}}{2} \|d_{k}\|^2 + \psi(x_{k+1}). \label{eq:68}
	\end{align}
	
	\noindent First, consider $\psi$ convex along coordinates. From optimality condition for  \eqref{eq:subproblem}:
	\begin{align*}
	%\label{oc1}
	\langle U^{T}_{i_k} \nabla f(x_{k}) + \Hf d_{k}, d - d_{k} \rangle + \psi (x_{k} + U_{i_k}d)  \geq \psi (x_{k} + U_{i_k}d_{k}) \quad \forall d \in \mathbb{R}^{n_{i}}.
	\end{align*}
	%
	%Choosing $d = 0$, we further obtain:
	%
	%\begin{align*}
	%- \langle U^{T}_{i_k} \nabla f(x_{k}) + \Hf d_{k}, d_{k} \rangle + \psi(x_{k}) \geq \psi(x_{k} + U_{i_k}d_{k}) = \psi(x_{k+1}).
	%\end{align*}
	
	\noindent Combining the inequality above for $d = 0$ with  \eqref{eq:68}, using \eqref{eq:H} and \eqref{eq:etaH}, we get:
	\vspace*{-0.1cm}
	\begin{align}
	F(x_{k+1}) 
	&\leq F(x_{k}) +\langle \nabla f(x_{k}), U_{i_k}d_{k} \rangle+ \frac{L_{i_{k}}}{2} \|d_{k}\|^2 - \langle U^{T}_{i_k} \nabla f(x_{k}), d_{k} \rangle  - \Hf \|d_{k}\|^2  \nonumber \\
	&= F(x_{k}) - \left(\Hf - \dfrac{L_{i_{k}}}{2}\right)  \|d_{k}\|^2 \leq  F(x_{k}) - \dfrac{\eta_{\min}}{2}  \|d_{k}\|^2. \label{eq:desc1}
	\end{align}
	
	\noindent Second, consider $\psi$ general function.   Since $d_{k}$ is the optimal solution for \eqref{eq:subproblem}, choosing $d=0$, we have:
	\vspace*{-0.1cm}
	\begin{equation}
	\langle U^{T}_{i_k} \nabla f(x_{k}), d_{k} \rangle + \dfrac{\Hf}{2} \|d_{k}\|^2  + \psi(x_{k} + U_{i_k}d_{k}) \leq  \psi(x_{k}). \label{eq:121}
	\end{equation} 
	
	\noindent From inequalities  \eqref{eq:68} and \eqref{eq:121}, using \eqref{eq:H} and \eqref{eq:etaH}, we also get:
	\vspace*{-0.1cm}
	\begin{eqnarray}
	F(x_{k+1}) &\leq& f(x_{k}) + \langle \nabla f(x_{k}), U_{i_k}d_{k} \rangle + \frac{L_{i_{k}}}{2} \|d_{k}\|^2 + \psi(x_{k+1}). \nonumber\\ 
	&\leq& F(x_{k}) - \dfrac{1}{2}\left(\Hf-L_{i_{k}}\right) \|d_{k}\|^2 \leq  F(x_{k}) - \dfrac{\eta_{\min}}{2}  \|d_{k}\|^2 . \label{eq:desc2}
	\end{eqnarray}	

\end{proof}

\noindent 	Note that the previous lemma  is valid independently on how the index $i_{k}$ is choosen. Moreover,  when $i_{k}$ is choosen uniformly at random  the iterates $x_k$ are random vectors, the function  values $F(x_k)$ are random variables and  $x_{k+1} $ depends on $x_k$ and $i_k$. In the sequel, we assume that the sequence $(x_{k})_{k\geq 0}$ generated by  algorithm CPG is bounded. In Section \ref{Sec:SufCond} we will present sufficient conditions when this holds.  
\red{ 
 Next, we will prove some descent w.r.t. the norm of the gradient. Let us first introduce some notations  that  will be used in the sequel: }
 \vspace*{-0.2cm}
 \begin{equation}
 \red{	\bar{\nabla}^2\Psi(z_{1},\cdots,z_{n})} = \begin{bmatrix}
 		\nabla_{1}^2\psi(z_{1})  \\
 		\vdots \\		
 		\nabla_{n}^2\psi(z_{n}) \label{hess}
 	\end{bmatrix},
 \vspace*{-0.2cm} 
 \end{equation}	
 \noindent with $\nabla_{i}^2\psi(z_{i})$ being the $i$th row of the hessian of $\psi$ at the point $z_{i} \in \mathbb{R}^{n}$,
 \begin{equation}
 	\HFi= \max_{z_{1},\cdots,z_{n} \in \overline{\text{conv}}\{(x_{k})_{k\geq 0}\}} \|U_{i}^{T}\bar{\nabla}^2\Psi(z_{1},\cdots,z_{n})U_{i} + \Hfi I_{n_{i}\times n_{i}} \| \label{Hpsi},
 \end{equation}
\noindent and $\HFmax = \max_{i=1:N} \HFi$.  Note that  CPG algorithm does not require the knowledge of  the coordinate-wise Lipschitz constants of the whole function  $F$ (or of the term  $\psi$). The constant $\HFmax$   only appears in the convergence rates. In \red{some} applications  the second term, $\psi$, although differentiable, might have expensive  gradient evaluation or  the corresponding  coordinate-wise Lipschitz constants over a bounded set might be difficult to estimate; on the other hand,  if the computation of the prox for the second term $\psi$  along a block of coordinates is easy, then  algorithm CPG can be used. \red{ One example is the function $\psi(x) = \|Ax\|^p$, with $p\geq 2$, which has an expensive gradient evaluation when the dimension  of matrix $A$ is very large, since we have to compute a matrix-vector product, and its gradient is Lipschitz on any bounded subset, but the coordinate-wise Lipschitz constants are not easy to estimate.  On the other hand, if we update only one coordinate at each iteration, solving the subproblem \eqref{eq:subproblem} is equivalent to finding a root of a scalar equation. More examples are given in Section 8.}
 For deterministic CPG (i.e.,  cyclic coordinate choice), let us also define $L$: 
 \begin{equation}
 \|U_{i}^{T} \left( \nabla f(x) - 	 \nabla f(y) \right) \| \leq L \| x-y\| \quad \forall  i=1:N \quad \text{and} \quad x,y \in \mathbb{R}^{n},  \label{Lip}
 \end{equation}
 and the constant 
 \begin{equation}
 \Hpsimaxb= \max_{ i=1:N, x \in  \overline{\text{conv}}\{(x_{k})_{k\geq 0}\}} \|U^{T}_{i}\nabla^2 \psi (x)\|. 
 \label{Hpsimax:cyc}  
  \vspace*{-0.2cm}
  \end{equation}
 Since $f$  has coordinate-wise Lipschitz gradient, then there exists  $L>0$ satisfying \eqref{Lip}, see \cite{Nes:10}. 
\begin{lemma} 
	\label{lem2}
	If  Assumption \ref{ass1} holds and the sequence $(x_{k})_{k\geq 0}$ generated by algorithm CPG is bounded, then we have the following descents: \\
	i) If $i_{k}$ is choosen uniformly at random, we have:
	 \vspace*{-0.2cm}
	\begin{align}
		\label{eq:23}
		\mathbb{E}[F(x_{k+1}) \ | \ x_{k}] \leq F(x_{k}) - \dfrac{\eta_{\min}}{2N\HFmax^2} \|\nabla F(x_{k})\|^2 . 
		 \vspace*{-0.2cm}
	\end{align}
	\noindent ii) If $i_{k}$ is choosen cyclic, we have:
	 \vspace*{-0.2cm}
	\begin{align}
		\label{eq:129}
		F(x_{k+N})  \leq F(x_{k}) - \dfrac{\eta_{\min}}{
			4N\Hpsimaxb^2+ 8(N-1)L^2+8\Hfmax^2  } \|\nabla F(x_{k})\|^2 . 
		 \vspace*{-0.2cm}
	\end{align}
\end{lemma}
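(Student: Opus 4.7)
My plan is to combine the per-iteration descent from Lemma~\ref{lem1} with an optimality-based bound on $\|U_{i_k}^T \nabla F(x_k)\|$ in terms of $\|d_k\|$. First, I would write the first-order optimality condition for subproblem \eqref{eq:subproblem}, namely $U_{i_k}^T \nabla f(x_k) + \Hf d_k + U_{i_k}^T \nabla \psi(x_{k+1}) = 0$, and add/subtract $U_{i_k}^T \nabla \psi(x_k)$ to rearrange as
\begin{equation*}
U_{i_k}^T \nabla F(x_k) = -\Hf d_k - U_{i_k}^T \bigl[\nabla \psi(x_{k+1}) - \nabla \psi(x_k)\bigr].
\end{equation*}
Since $\psi \in C^2$, I would apply the componentwise mean value theorem to each scalar entry of $\nabla\psi$ along the segment $[x_k, x_{k+1}]$, producing points $z_1,\dots,z_n$ on this segment satisfying $\nabla\psi(x_{k+1}) - \nabla\psi(x_k) = \bar{\nabla}^2\Psi(z_1,\dots,z_n)\, U_{i_k} d_k$ in the notation \eqref{hess}. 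Since the $z_j$ lie in the convex hull of the (bounded) iterate sequence, the definition \eqref{Hpsi} immediately delivers $\|U_{i_k}^T \nabla F(x_k)\| \leq \HFmax \|d_k\|$.

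For part (i), I would plug this inequality into Lemma~\ref{lem1} to obtain $F(x_{k+1}) \leq F(x_k) - \frac{\eta_{\min}}{2\HFmax^2}\|U_{i_k}^T\nabla F(x_k)\|^2$, take conditional expectation $\mathbb{E}[\,\cdot\, | \, x_k]$ with $i_k$ uniform on $\{1,\dots,N\}$, and invoke the identity $\sum_{i=1}^N\|U_i^T\nabla F(x_k)\|^2 = \|\nabla F(x_k)\|^2$, which holds because the $U_i$ form a column-permutation of $I_n$.

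For part (ii), I would iterate Lemma~\ref{lem1} across one cycle to get $F(x_k) - F(x_{k+N}) \geq \frac{\eta_{\min}}{2}\sum_{l=0}^{N-1}\|d_{k+l}\|^2$, and then establish an upper bound of the form $\|\nabla F(x_k)\|^2 \leq C\sum_{l=0}^{N-1}\|d_{k+l}\|^2$ matching the claimed constant. Assuming without loss of generality that the cycle visits blocks $1,2,\dots,N$ in order, so block $i$ is updated at iteration $k+i-1$, the optimality condition there rearranges to
\begin{equation*}
U_i^T\nabla F(x_k) = U_i^T[\nabla f(x_k) - \nabla f(x_{k+i-1})] + U_i^T[\nabla\psi(x_k) - \nabla\psi(x_{k+i})] - H_{f,U_i}d_{k+i-1}.
\end{equation*}
I would bound the first summand via \eqref{Lip} by $L\sum_{l=0}^{i-2}\|d_{k+l}\|$; telescope $\nabla\psi(x_{k+i}) - \nabla\psi(x_k)$ into one-step increments and apply Lemma~\ref{lemma:MVI} to $G(\cdot) = U_i^T\nabla\psi(\cdot)$ together with \eqref{Hpsimax:cyc} to bound the second summand by $\Hpsimaxb\sum_{l=0}^{i-1}\|d_{k+l}\|$. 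Squaring via $(a+b+c)^2 \leq 3(a^2+b^2+c^2)$, bounding partial sums by Cauchy--Schwarz, and summing $\sum_{i=1}^N\|U_i^T\nabla F(x_k)\|^2 = \|\nabla F(x_k)\|^2$ then produces the desired estimate on $\|\nabla F(x_k)\|^2$ in terms of $\sum_l\|d_{k+l}\|^2$.

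The main obstacle will be the cyclic bookkeeping: the specific denominator $4N\Hpsimaxb^2 + 8(N-1)L^2 + 8\Hfmax^2$ emerges only after a careful regrouping of the squared partial sums across the $N$ blocks, so that the contribution of each $\|d_{k+l}\|^2$ is tallied cleanly and no cross terms inflate the constants. By comparison, the random case is almost immediate once the optimality-based inequality $\|U_{i_k}^T\nabla F(x_k)\| \leq \HFmax\|d_k\|$ is in place.
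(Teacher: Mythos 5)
Your treatment of part (i) is essentially the paper's argument: the optimality condition \eqref{oc3}, the componentwise mean value theorem yielding $\|U_{i_k}^{T}\nabla F(x_{k})\| \leq \HFmax \|d_{k}\|$ via \eqref{Hpsi}, and the identity $\mathbb{E}[\|U_{i_k}^{T}\nabla F(x_k)\|^2 \mid x_k] = \tfrac{1}{N}\|\nabla F(x_k)\|^2$ combined with Lemma \ref{lem1} give \eqref{eq:23} exactly as in the paper (the paper merely takes the expectation before, rather than after, the pointwise bound).

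In part (ii), however, your plan for bounding the two increment terms does not deliver the stated constant. You propose to telescope $\nabla\psi(x_{k+i})-\nabla\psi(x_k)$ (and likewise the $f$-increment) into one-step pieces, bound each piece by $\Hpsimaxb\|d_{k+l}\|$, and then control $\bigl(\sum_{l=0}^{i-1}\|d_{k+l}\|\bigr)^2$ by Cauchy--Schwarz. That last step costs a factor of $i$, and summing over $i=1{:}N$ gives each $\|d_{k+l}\|^2$ a coefficient of order $\sum_{i>l} i = \mathcal{O}(N^2)$, so the resulting denominator is $\mathcal{O}(N^2)\Hpsimaxb^2 + \mathcal{O}(N^2)L^2 + \mathcal{O}(1)\Hfmax^2$ rather than the claimed $4N\Hpsimaxb^2 + 8(N-1)L^2 + 8\Hfmax^2$. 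No regrouping of the squared partial sums repairs this: the loss is intrinsic to telescoping followed by Cauchy--Schwarz. The paper avoids it by \emph{not} telescoping. It applies Lemma \ref{lemma:MVI} directly to the whole displacement, $\|U_{i}^{T}(\nabla\psi(x_{k})-\nabla\psi(x_{k+i}))\| \leq \|U_{i}^{T}\nabla^2\psi(z_{i})\|\,\|x_{k+i}-x_{k}\|$ with a single intermediate point $z_{i}\in[x_k,x_{k+i}]$, and likewise uses \eqref{Lip} with $\|x_k - x_{k+i-1}\|$ as a whole; then, because the cyclic updates touch disjoint blocks, $\|x_{k+j}-x_k\|^2 = \sum_{l<j}\|d_{k+l}\|^2 \leq \|x_{k+N}-x_k\|^2$ holds \emph{exactly}, with no Cauchy--Schwarz loss. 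Each of the $N$ blocks then contributes one multiple of $\|x_{k+N}-x_k\|^2$, which is where the linear-in-$N$ coefficients come from (the split $\|a+b+c\|^2 \leq 4\|a\|^2+2\|b\|^2+4\|c\|^2$, rather than your factor $3$, accounts for the remaining discrepancy). Your argument as written proves a valid but strictly weaker descent inequality, not \eqref{eq:129}.
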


\begin{proof} 
	\red{i)} First,  we consider $i_{k}$  choosen uniformly at random.  Since $f$, $\psi$ are differentiable, from the  optimality condition for $d_{k}$, we get: 
	\begin{equation}
		\label{oc3}
		U^{T}_{i_k} (\nabla f(x_{k}) + \nabla \psi(x_{k+1})) +\Hf d_{k}  = 0. 
	\end{equation}
	\noindent Moreover,   $\nabla F(x_{k}) = \nabla f(x_{k}) + \nabla \psi(x_{k})$ and 
	\vspace*{-0.1cm}
	\begin{equation*}
		\mathbb{E}[\| U^{T}_{i_{k}} \nabla F(x_{k})\|^2 \ | \ x_{k} ]  = \dfrac{1}{N} \|\nabla F(x_{k})\|^2. 
	\end{equation*}
	\noindent Combining the equality above with \eqref{oc3}, we get:
	\vspace*{-0.1cm}
	\begin{eqnarray}
		\dfrac{1}{N}\|\nabla F(x_{k})\|^{2} &=& \mathbb{E}[\|U^{T}_{i_k}(\nabla f(x_{k}) + \nabla \psi(x_{k})) \|^{2} \ | \ x_{k} ] \nonumber \\
		&=& \mathbb{E}[\|U^{T}_{i_k}(\nabla \psi(x_{k}) - \nabla \psi(x_{k+1})) - \Hf d_{k} \|^{2} \ | \ x_{k} ]. \nonumber  
	\end{eqnarray}
	
\noindent Now, considering the particular form of the matrix $U_{i_k}$, using the mean value theorem and the definition of $H_{F_{i_k}}$, we further get: 
\vspace*{-0.1cm}
\begin{align*}	
& \|U^{T}_{i_k}(\nabla \psi(x_{k}) \!- \nabla \psi(x_{k+1})) -\! \Hf d_{k} \|^{2}   \!=\! \sum_{j \in I_k} |  \nabla_j \psi(x_{k}) -\! \nabla_j \psi(x_{k+1}) - \! \Hf d_{k,j}   |^2 \\
&=   \sum_{j \in I_k} |  \nabla_j^2 \psi(z_{k,j})  U_{i_k} d_k  + \Hf d_{k,j}   |^2   = \|   U_{i_k}^T   	\bar{\nabla}^2\Psi(z_{k,1},\cdots,z_{k,n})  U_{i_k} d_k +  \Hf d_{k}  \|^2 \\
& \leq  \|   U_{i_k}^T   	\bar{\nabla}^2\Psi(z_{k,1},\cdots,z_{k,n})  U_{i_k}  +  \Hf  I_{n_{i_k}}\|^2  \| d_{k}  \|^2  \leq  H_{F_{i_k}} ^2  \| d_{k}  \|^2 \leq  \HFmax^2  \| d_{k}  \|^2,
\end{align*}	

\vspace{-0.0cm}

\noindent where $I_k$ is the set of indexes chosen at  $k$ and $z_{k,j} \in [x_{k}, x_{k+1}] $ for all $j \in I_k$. Hence, we get:
	\begin{equation}
		\|\nabla F(x_{k})\|^{2} \leq N \HFmax^2 \cdot \mathbb{E} \left[ \|d_{k}\|^2 \ | \ x_{k}\right]. \label{eq:128}
	\end{equation}
	\noindent Finally, taking the conditional expectation of both sides of the inequality \eqref{descent} w.r.t. $x_{k}$ and combining it with \eqref{eq:128}, \red{we get \eqref{eq:23}. }  \\
	\red{ii) }If $i_{k}$ is chosen cyclic, then, with some abuse of notation, let us  consider that at the $k$th iteration the first block of coordinates is updated and  at the  $(k+i_{k}-1)$th iteration, we update the $i_{k}$th block of coordinates. Hence, using the optimality condition \eqref{oc3}, we obtain:
	\vspace*{-0.3cm}
	\begin{align}
	&\|\nabla F(x_{k})\|^{2} = \sum_{i_{k} = 1}^{N} \| U_{i_{k}}^{T}\nabla F(x_{k})\|^2 = \sum_{i_{k} = 1}^{N} \| U_{i_{k}}^{T}\left( \nabla f(x_{k})  + \nabla \psi(x_{k})\right)  \|^2 \nonumber \\
	& =   \sum_{i_{k} = 1}^{N} \| U_{i_{k}}^{T}\left( \nabla f(x_{k}) - \nabla f(x_{k+i_{k}-1}) + \nabla \psi(x_{k}) - \nabla \psi(x_{k+i_{k}})\right)  - \Hf d_{k+i_{k} -1} \|^2 \nonumber \\ 
	&\leq \sum_{i_{k} = 1}^{N}\left(  4\| U_{i_{k}}^{T}\left( \nabla f(x_{k}) - \nabla f(x_{k+i_{k}-1}) \right)\|^2  + 4\Hf \|d_{k+i_{k} -1} \|^2\right)  \nonumber \\
	 &+ \sum_{i_{k} = 1}^{N}  2\| U_{i_{k}}^{T}\left( \nabla \psi(x_{k}) - \nabla \psi(x_{k+i_{k}})\right) \|^2. \nonumber
	\end{align}

\vspace*{-0.3cm}
	
 \noindent Note that $\|x_{k+N} - x_{k}\|^2 = \displaystyle \sum_{i_{k}=1}^{N} \|d_{k+i_{k}-1}\|^2$.
	Using  \eqref{eq:etaH}, \eqref{Lip} and the mean value inequality (see Lemma \ref{lemma:MVI})  with $z_{i_{k}} \in [x_{k}, x_{k+i_{k}}]$, we get:
	\vspace*{-0.2cm}
	\begin{align}
		\|\nabla F(x_{k})\|^{2} &\leq 4\Hfmax^2 \|x_{k+N} - x_{k} \|^2 + \sum_{i_{k} = 1}^{N} 4 L^2 \| x_{k} - x_{k+i_{k}-1}\|^2   \nonumber\\
		& +  \sum_{i_{k} = 1}^{N} 2\|	U_{i_{k}}^{T}\nabla^2 \psi(z_{i_{k}})\|^2 \|x_{k+i_{k}} - x_{k} \| ^2.  \nonumber
	\end{align}
	
	\vspace{-0.2cm} 
	
	\noindent \red{ Note that, since one block of coordinates is updated at each iteration, we have $\|x_{k+i_{k}} - x_{k} \|  \leq \|x_{k+N} -x_{k}\|$ for all $i_{k} =1:N-1$.  
	 Hence, from  \eqref{Hpsimax:cyc}, we  obtain:}
	\begin{equation}
		\|\nabla F(x_{k})\|^{2} \leq   \left( 2N\Hpsimaxb^2+  4\Hfmax^2 + 4(N-1)L^2\right)   \|x_{k+N} -x_{k} \|^2. \label{eq:cyc2}
	\end{equation}

\vspace{-0.3cm}

\noindent 	Using  $\|x_{k+N} - x_{k}\|^2=\displaystyle \sum_{i_{k}=1}^{N} \|d_{k+i_{k}-1}\|^2$, from inequality
	\eqref{descent}, we have: 
	\vspace*{-0.5cm}
	\begin{equation}
		F(x_{k+N}) \leq F(x_{k}) - \dfrac{\eta_{\min}}{2} \sum_{i_{k} = 1}^{N} \|d_{k+i_{k} -1}\|^2. \label{eq:cyc3}
		\vspace*{-0.2cm}
	\end{equation}
	\noindent  Finally, combining \eqref{eq:cyc2} and \eqref{eq:cyc3}, we obtain \eqref{eq:129}.   
\end{proof}

\vspace{-0.1cm}

\begin{remark}
	Recall that the convergence analysis  in \cite{BecTet:13}  for cyclic coordinate descent  contains a term $N\bar{L}^2$, with $\bar{L}$ the global  Lipschitz constant of the gradient of $f$, and $\psi=0$. Note that our  $L$ defined in \eqref{Lip} is usually smaller than $\bar{L}$, hence our estimate is usually   better.  
\end{remark}

\section{A coordinate gradient descent algorithm}
\label{sec:RCGD}
In this section, we present a Coordinate Gradient Descent (CGD)  algorithm for solving problem \eqref{eq:prob},  with $f$ and $\psi$ possibly nonseparable and nonconvex. In each iteration, $d_{k}$ is given by some (block) components of the full gradient $\nabla F(x_{k})$.  

\begin{center}
	\noindent\fbox{%
		\parbox{12cm}{%
			\textbf{Algorithm 2 (CGD)}:\\
			Given a starting point $x_{0} \in \rset^n$. \\
			For $k \geq 0$ do: \\
			1. \; Choose $i_{k} \in \{1,...,N\}$ uniformly at random or cyclic and compute $\HF>0$ as defined in one of the following equations: \eqref{eq:HF1}, \eqref{eq:HF2}, \eqref{eq:HF3} or \eqref{eq:HF4}. \\
			2. \; Solve the following subproblem:
			\vspace*{-0.1cm}
			\begin{equation}
			\label{eq:subproblem2}
			\;\; d_{k} = \arg \min_{d \in \mathbb{R}^{n_{i}}} F(x_{k}) +  \langle U^{T}_{i_k} \nabla F(x_{k}), d \rangle + \dfrac{\HF}{2} \|d\|^2.
			\vspace*{-0.1cm}	
			\end{equation} \\
			3. \; 	Update $x_{k+1} = x_{k} + U_{i_k}d_{k}$.
	}}
\end{center}
\medskip
\noindent From the optimality conditions for the subproblem \eqref{eq:subproblem2}, we have:
\vspace*{-0.1cm}
\begin{equation}
d_{k} = - \dfrac{1}{\HF} U^{T}_{i_k} \nabla F(x_{k}) = - \dfrac{1}{\HF} U^{T}_{i_k} \left( \nabla f(x_{k}) + \nabla \psi(x_{k})\right) . \label{eq:d}
\end{equation}
The main difficulty with  algorithm CGD is that we need to find an appropriate stepsize $\HF$ which ensures descent, although the full objective  function $F$ doesn't have a coordinate-wise Lipschitz gradient.  In the sequel we derive  novel stepsize rules which combined with additional properties on $\psi$  yield descent. Let us denote: 
\vspace*{-0.1cm}
\begin{equation}
\Hf = \dfrac{L_{i_{k}} + \eta_{i_{k}}}{2}.  \label{eq:75}
\end{equation}

%\subsection{{\color{red} Holder-Type and Lipschitz Hessian}}
\noindent Consider one of  the following additional properties on the function $\psi$.
\begin{assumption}
	\label{ass2} \red{ Assume either:}
	\item []A.4: Given function $\psi$,  there exist $\Hp > 0$ and integer $p  \geq 1$ such that:
	\vspace*{-0.1cm}
	\begin{equation*}
	\| U_{i}^{T} \nabla^2 \psi (y) U_{i}\| \leq \Hp \|y\|^{p}  \quad \forall y \in \mathbb{R}^{n}, \quad i = 1:N.  
	\end{equation*}
	%{\color{blue}holder-continuous gradient}
	\item []A.5: Hessian of $\psi$ is Lipschitz, i.e., there exists $L_{\psi} > 0$ such that:
	\vspace*{-0.1cm}
	\begin{equation*}
	\|\nabla^{2}\psi(y) - \nabla^2\psi(x)\| \leq L_{\psi} \|y-x\| \quad \forall x,y \in \mathbb{R}^{n}.
	\end{equation*}
	\item [] A.6: Function $\psi$ is differentiable and  concave along coordinates, i.e.: 
	 \vspace*{-0.2cm}
	\begin{equation*}
	\psi(x + U_{i}d) \leq \psi(x) + \langle U_{i}^{T}\nabla \psi(x), d\rangle \quad \forall d \in \mathbb{R}^{n_{i}}, x\in\mathbb{R}^{n}, i =1:N.
	\end{equation*}
	
\end{assumption}

\noindent See  Section \ref{Simulations} for concrete examples of functions satisfying Assumption \ref{ass2} [A.4-A.5].   
%If  Assumption \ref{ass2}[A.5] holds, then we have: 
%\vspace*{-0.1cm} 
%\begin{equation*}
%|\psi(y) - \psi(x) - \langle \nabla \psi(x),y-x\rangle - \dfrac{1}{2}\langle \nabla^2 \psi(x)(y-x),y-x\rangle| \leq \dfrac{L_{\psi}}{6} \|y-x\|^3 \quad \forall x,y \in \mathbb{R}^{n}.
%\end{equation*}

\noindent For simplicity of the exposition, in Table \ref{table:stepsizes} we present four stepsize rules  and the corresponding assumptions  on $\psi$ which allows us to prove   descent for algorithm CGD. Note that, in order to run  algorithm CGD, we need to know  $\Hp$ or $L_{\psi}$, respectively, and  the third stepsize strategy requires computation of $\nabla^2 \psi$ only in $x_{0}$.
	
\footnotesize{
%\begin{table}
%\begin{center}
\begin{longtable}{| c| c | c|}
	\hline
	 &  Stepsize choice& Ass. on $\psi$ \\
	\hline
	 & \multirow{11}{9.5cm}{ 1.   Choose $\Hf > \dfrac{L_{i_{k}}}{2}$ and compute $\alpha_{k}\geq0$ as root of second order equation in $\alpha$: 
	\begin{equation}
	\dfrac{L_{\psi}}{6} \alpha^{2} + \left(  \dfrac{\Hp}{2} \|x_{k}\|^{p} + \Hf\right) \alpha - \|U_{i_{k}}^{T} \nabla F(x_{k})\| = 0. \label{eq:103}
	\end{equation}
	2. Define
	\vspace*{-0.1cm}
	\begin{equation}
	\HF = \frac{\Hp}{2}\|x_{k}\|^p + \frac{L_{\psi}}{6} \alpha_{k} + \Hf. \label{eq:HF1}
	\vspace*{-0.2cm}
	\end{equation}} & \\ 
	& & \\
	& & \\
	& & \\
	& & A.4  \\
	1) & &  and\\
	& & A.5 \\
	& & \\
	& & \\
	& & \\\hline
	& \multirow{10}{9.5cm}{ 1. Choose $\Hf > \dfrac{L_{i_{k}}}{2}$ and compute $\alpha_{k}\geq0$ as root of the following polynomial equation in $\alpha$:
		\begin{equation}
		2^{p-1} \Hp \alpha^{p+1} + (2^{p-1}  \Hp \|x_{k}\|^{p} + \Hf)\alpha 
		= \|U_{i_{k}}^{T} \nabla F(x_{k})\|. \label{eq:100}
		\end{equation}
		2.  Define:
		\vspace*{-0.1cm}
		\begin{equation}
		\HF = 2^{p-1} \Hp\|x_{k}\|^p + 2^{p-1} \Hp \alpha_{k}^p + \Hf. \label{eq:HF2}
		\vspace*{-0.3cm}
		\end{equation}
} & \\ 
	& & \\
	& & \\
	& & \\
	& & \\
	2) & &  A.4 \\
	& & \\
	& & \\
	& & \\  \hline
	& \multirow{12}{9.5cm}{ 1. \;  Choose $\Hf > \dfrac{L_{i_{k}}}{2}$ and compute $\alpha_{k}\geq0$ as root of the second order equation:
		\begin{eqnarray}
		\dfrac{L_{\psi}}{6} \alpha^{2} + \left(  \dfrac{L_{\psi}}{2} \|x_{k}-x_{0}\| + \dfrac{1}{2} \|\nabla^2 \psi(x_{0})\| + \Hf\right) \alpha \nonumber \\
		 = \|U_{i_{k}}^{T} \nabla F(x_{k})\|. \label{eq:105}
		\end{eqnarray}
		2. \; Update 
		\begin{equation}
		\vspace*{-0.1cm}
		\HF = \frac{L_{\psi}}{2}\|x_{k}-x_{0}\| + \frac{1}{2}\|\nabla^2 \psi(x_{0})\| + \frac{L_{\psi}}{6} \alpha_{k} + \Hf \label{eq:HF3}
		\vspace*{-0.3cm}
		\end{equation}} & \\ 
	& & \\
	& & \\
	& & \\
	& & \\
	& & \\
	3) & &  A.5 \\
	& & \\
	& & \\
	& & \\
	& & \\
	& & \\ \hline
	& \multirow{5.5}{8cm}{1. \;  Choose $\Hf > \dfrac{L_{i_{k}}}{2}$ and update 
	\vspace*{-0.1cm}
	\begin{equation}
	\HF = \Hf. \label{eq:HF4}
	\vspace*{-0.1cm}
	\end{equation}} & \\
	4)& & A.6\\
	& &  \\
	& & \\ \hline
	\caption{Proposed stepsize rules for the algorithm CGD. }
	\label{table:stepsizes}
\end{longtable}
%\end{center}
%\end{table}
}
\normalsize
\vspace*{-0.5cm}
\noindent Note that, the first three stepsize rules are adaptive and require at each iteration computation of  a nonnegative root of  some polynomial,  while the last one is chosen constant. Moreover,  the case 4) covers difference of convex (DC) programming problems and our algorithm CGD is new in this context.  One can easily see that all the equations \eqref{eq:103}, \eqref{eq:100} and  \eqref{eq:105} admit a nonnegative root $\alpha_{k} \geq 0$ and thus $\HF$ is well-defined. Indeed, let us check for the second stepsize choice. Consider:
\vspace*{-0.01cm} 
\begin{equation}
	h(\alpha) = 2^{p-1} \Hp \alpha^{p+1} + (2^{p-1}  \Hp \|x_{k}\|^{p} + \Hf)\alpha - \|U_{i_{k}}^{T} \nabla F(x_{k})\| \label{eq:h}
	\vspace*{-0.1cm}
\end{equation}

\noindent and $w_{k} = \dfrac{1}{\Hf} \|U_{i_{k}}^{T} \nabla F(x_{k})\|$. If $\|U_{i_{k}}^{T} \nabla F(x_{k})\| \neq 0$, then we have $h(w_{k}) > 0$ and $h(0) < 0$.  Since $h$ is continuous on $[0,w_{k}]$,  there exists $\alpha_{k} \in (0,w_{k})$ such that $h(\alpha_{k}) = 0$. Moreover, since  $h'(\alpha) > 0$ for all $\alpha \in (0,+\infty)$, then $h$ is strictly increasing on $(0,+\infty)$. Hence, there exists exactly one $\alpha_{k} > 0$ satisfying \eqref{eq:100}. Otherwise, if $\|U_{i_{k}}^{T} \nabla F(x_{k})\| = 0$, we have $\alpha_{k} = 0$.   One can see that the first three stepsizes  satisfy:
\vspace*{-0.1cm}
\begin{equation}
	\|d_{k}\| = \dfrac{1}{\HF}\|U_{i_{k}}^{T} \nabla F(x_{k})\|  = \alpha_{k}. \label{eq:104}
	\vspace*{-0.1cm}
\end{equation}

\begin{lemma}
	\label{lem:desRCGD2}
	Let   Assumptions \ref{ass1} and  \ref{ass2} hold such that $\HF$ is updated according to Table \ref{table:stepsizes}.  Then,  the iterates of algorithm CGD satisfy the descent:
		\vspace*{-0.1cm}
	\begin{align}
	\label{descent2}	
     F(x_{k+1})  \leq F(x_{k}) - \dfrac{\eta_{\min}}{2} \|d_{k}\|^2 . 
     \vspace*{-0.1cm}
	\end{align}
\end{lemma}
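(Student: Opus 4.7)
The plan is to reduce the problem, for each of the four stepsize rules, to a single inequality of the form
$$F(x_{k+1}) \leq F(x_k) + \bigl\langle U_{i_k}^T \nabla F(x_k), d_k\bigr\rangle + R_k \,\|d_k\|^2,$$
where $R_k$ is an upper bound on the ``effective'' coordinate-wise curvature of $F$ at step $k$; then use $d_k = -H_{F_k}^{-1} U_{i_k}^T \nabla F(x_k)$ so that the inner product equals $-H_{F_k}\|d_k\|^2$, leaving a coefficient $R_k - H_{F_k}$. Showing $H_{F_k} - R_k \geq \eta_{\min}/2$ for each of the four choices in Table \ref{table:stepsizes} gives \eqref{descent2}.

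The starting estimate for $f$ is identical in all four cases: by \eqref{lip2},
$$f(x_{k+1}) \leq f(x_k) + \bigl\langle U_{i_k}^T\nabla f(x_k), d_k\bigr\rangle + \tfrac{L_{i_k}}{2}\|d_k\|^2.$$
Adding and subtracting $\langle U_{i_k}^T\nabla\psi(x_k), d_k\rangle$ rewrites this as $F$ plus the partial gradient of the full objective plus a $\psi$-remainder. For the $\psi$-remainder I would use different Taylor-type expansions along the segment $[x_k, x_{k+1}]$ depending on which assumption from \ref{ass2} is available. In case (4) (A.6), the concave-along-coordinates property gives the one-line bound $\psi(x_{k+1}) - \psi(x_k) \leq \langle U_{i_k}^T\nabla\psi(x_k), d_k\rangle$ directly, so $R_k = L_{i_k}/2$ and the constant choice $H_{F_k} = (L_{i_k}+\eta_{i_k})/2$ immediately yields $H_{F_k} - R_k = \eta_{i_k}/2$. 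In case (2) (A.4 only), I would use the integral-remainder form of Taylor's theorem to the second order, bound the Hessian along the segment by $H_\psi(\|x_k\|+\|d_k\|)^p$, apply the standard inequality $(a+b)^p \leq 2^{p-1}(a^p+b^p)$, and identify $\alpha_k$ with $\|d_k\|$ via \eqref{eq:104}; the polynomial equation \eqref{eq:100} is precisely the relation that makes $H_{F_k} - R_k = \eta_{i_k}/2$.

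Cases (1) and (3) require Lipschitz Hessian (A.5). Here the natural tool is the third-order bound
$$\psi(x_{k+1}) \leq \psi(x_k) + \langle \nabla\psi(x_k), U_{i_k} d_k\rangle + \tfrac{1}{2}\bigl\langle U_{i_k}^T\nabla^2\psi(x_k) U_{i_k} d_k, d_k\bigr\rangle + \tfrac{L_\psi}{6}\|d_k\|^3,$$
which follows from A.5 by integrating twice (this is the standard cubic upper model). In case (1) I would then bound $\|U_{i_k}^T\nabla^2\psi(x_k)U_{i_k}\|$ by $H_\psi\|x_k\|^p$ via A.4; the quadratic equation \eqref{eq:103} determines $\alpha_k = \|d_k\|$, and the stepsize \eqref{eq:HF1} is designed so that $H_{F_k} - L_{i_k}/2 - \tfrac{H_\psi}{2}\|x_k\|^p - \tfrac{L_\psi}{6}\|d_k\| = \eta_{i_k}/2$. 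In case (3) I would instead use only A.5 and dominate $\|\nabla^2\psi(x_k)\|$ by $\|\nabla^2\psi(x_0)\| + L_\psi\|x_k - x_0\|$; the quadratic \eqref{eq:105} is the analogous identity.

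The main obstacle I expect is bookkeeping: one must check that \eqref{eq:104} genuinely forces $\|d_k\| = \alpha_k$ (so that the cubic/polynomial residuals which depend on $\|d_k\|$ exactly match the terms absorbed into $H_{F_k}$), and that the Hessian-on-segment bound is estimated at the right anchor point ($x_k$ when using A.5 third-order model, along the segment when using A.4 alone) so that the constants $2^{p-1}$, $1/2$, $1/6$ match those appearing in Table \ref{table:stepsizes}. Once these alignments are verified for each case, combining with $\eta_{i_k}\geq \eta_{\min}$ delivers \eqref{descent2}.
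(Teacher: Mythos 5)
Your proposal is correct and follows essentially the same route as the paper's proof: the coordinate-wise Lipschitz bound \eqref{lip2} for $f$, a case-by-case Taylor-type bound on the $\psi$-remainder (cubic upper model under A.5, mean-value/integral-remainder plus $(a+b)^p\le 2^{p-1}(a^p+b^p)$ under A.4 alone, the one-line concavity inequality under A.6), followed by cancellation via $d_k=-H_{F_k}^{-1}U_{i_k}^T\nabla F(x_k)$ and the identity $\alpha_k=\|d_k\|$ from \eqref{eq:104}. The only cosmetic difference is that in case 2 the paper applies the mean value theorem twice (via Lemma \ref{lemma:MVI}) rather than an integral remainder, which yields the same bound.
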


\begin{proof}
\vspace*{-0.1cm}
Consider first case	\textit{1)}, i.e.,  conditions  A.4 and A.5 of Assumption \ref{ass2}  hold.  From Assumption \ref{ass2}[A.5], we have:
		\vspace*{-0.2cm}
	\begin{equation*}
	\psi(x_{k+1}) \leq \psi(x_{k}) + \langle \nabla \psi(x_{k}),U_{i_{k}}d_{k}\rangle + \dfrac{1}{2}\langle \nabla^2 \psi(x_{k})U_{i_{k}}d_{k},U_{i_{k}}d_{k}\rangle + \dfrac{L_{\psi}}{6} \|d_{k}\|^3.
	\end{equation*} 
	
	\noindent Combining the previous  inequality  with \eqref{lip2}, we obtain:
	\vspace*{-0.1cm}
	\begin{eqnarray*}
		&&f(x_{k+1}) + \psi(x_{k+1}) \leq f(x_{k}) + \langle \nabla f(x_{k}), U_{i_k}d_{k} \rangle + \frac{L_{i_{k}}}{2} \|d_{k}\|^2 + \psi(x_{k+1}) \\  
		&&\leq f(x_{k}) + \langle U_{i_{k}}^{T}\nabla F(x_{k}), d_{k} \rangle + \frac{L_{i_{k}}}{2} \|d_{k}\|^2 + \psi(x_{k})  + \dfrac{1}{2}\langle U_{i_{k}}^{T}\nabla^2 \psi(x_{k})U_{i_{k}}d_{k},d_{k}\rangle + \dfrac{L_{\psi}}{6} \|d_{k}\|^3. 
	\end{eqnarray*}	
	
	\noindent Further, from \eqref{eq:d}, we have:
		\vspace*{-0.1cm}
	\begin{align}
		\label{eq:107}  
	& F(x_{k+1}) \\
	& \leq F(x_{k}) - \HF\|d_{k}\|^{2} + \frac{L_{i_{k}}}{2} \|d_{k}\|^2  + \dfrac{1}{2}\langle U_{i_{k}}^{T}\nabla^2 \psi(x_{k})U_{i_{k}}d_{k},d_{k}\rangle + \dfrac{L_{\psi}}{6} \|d_{k}\|^3   \nonumber\\
	& \leq F(x_{k}) - \HF\|d_{k}\|^{2} + \frac{L_{i_{k}}}{2} \|d_{k}\|^2  + \dfrac{1}{2} \|U_{i_{k}}^{T}\nabla^2 \psi(x_{k})U_{i_{k}}\| \|d_{k}\|^2 + \dfrac{L_{\psi}}{6} \|d_{k}\|^3. \nonumber
	\end{align}
	
	\noindent From  Assumption \ref{ass2}[A.4], we obtain:
		\vspace*{-0.1cm}
	\begin{equation*}
	F(x_{k+1}) \leq F(x_{k}) - \HF \|d_{k}\|^{2} + \frac{L_{i_{k}}}{2} \|d_{k}\|^2  + \dfrac{\Hp}{2} \| x_{k}\|^p \|d_{k}\|^2 + \dfrac{L_{\psi}}{6} \|d_{k}\|^3.
	\vspace*{-0.1cm}
	\end{equation*}  
	
	\noindent From \eqref{eq:HF1} and \eqref{eq:104}, we have $\HF = \frac{L_{\psi}}{6} \|d_{k}\| + \frac{\Hp}{2} \|x_{k}\|^{p} + \Hf$. Then, from \eqref{eq:75} and \eqref{eq:etaH}, we get the descent:
		\vspace*{-0.2cm}
	\begin{equation}
	F(x_{k+1}) \leq F(x_{k}) - \left( \Hf - \frac{L_{i_{k}}}{2}\right)  \|d_{k}\|^2 \leq F(x_{k}) - \frac{\eta_{\min}}{2} \|d_{k}\|^2. \label{eq:122}
	 \vspace*{-0.1cm}
	\end{equation}
	
\noindent Consider now case	\textit{2)}, i.e.,  A.4 of Assumption \ref{ass2} holds. Since $\psi$ is differentiable, from the mean value theorem there exists $y_k \in [x_{k},x_{k} + U_{i_k}d_{k}]$ such that
	$\psi(x_{k+1}) - \psi(x_{k}) = \langle \nabla \psi(y_{k}), U_{i_{k}}d_{k}\rangle$.
	Combining the last equality with \eqref{lip2}, we obtain:
	\vspace*{-0.2cm}
	\begin{eqnarray*}
		f(x_{k+1}) + \psi(x_{k+1}) &\leq& f(x_{k}) + \langle \nabla f(x_{k}), U_{i_k}d_{k} \rangle + \frac{L_{i_{k}}}{2} \|d_{k}\|^2 + \psi(x_{k+1}) \\  
		&=& f(x_{k}) + \psi(x_{k}) + \langle \nabla \psi(y_{k}) + \nabla f(x_{k}), U_{i_{k}}d_{k}\rangle + \frac{L_{i_{k}}}{2} \|d_{k}\|^2. 
		\vspace*{-0.1cm}
	\end{eqnarray*}	
	
	\noindent Using \eqref{eq:d}, we further have:
	\vspace*{-0.1cm}
	\begin{equation}
		F(x_{k+1}) \leq F(x_{k}) + \langle U_{i_{k}}^{T}\left(  \nabla \psi(y_{k}) - \nabla \psi(x_{k})\right) , d_{k}\rangle - \HF \|d_{k}\|^2 + \frac{L_{i_{k}}}{2} \|d_{k}\|^2. \label{eq:101}
	\end{equation}  
	
	\noindent Since $y_k \in [x_{k},x_{k}+U_{i_k}d_{k}]$, then $y_{k} = (1-\tau)x_{k} + \tau\left( x_{k} + U_{i_k}d_{k}\right)$ for some $\tau \in [0,1]$. Moreover,  from Lemma \ref{lemma:MVI} there exists $\bar{x}_k \in [x_{k},y_{k}]$ such that: 
	$$\|U^{T}_{i_k}(\nabla \psi(y_{k}) - \nabla \psi(x_{k}))\|  \leq \|U^{T}_{i_k}\nabla^2 \psi (\bar{x}_k) U_{i_{k}} \|   \|\tau d_{k}\| \leq \|U^{T}_{i_k}\nabla^2 \psi (\bar{x}_k) U_{i_{k}} \|   \| d_{k}\|.$$
	Note that $\bar{x}_k =  (1-\mu) x_{k} + \mu y_{k} $ for some $\mu \in [0,1]$. From  Assumption \ref{ass2}[A.4] and the last inequality, we obtain:
	\vspace*{-0.1cm}
	\begin{eqnarray*}
	\langle U_{i_{k}}^{T}\left(  \nabla \psi(y_{k}) - \nabla \psi(x_{k})\right) , d_{k}\rangle &\leq&      
	\|U_{i_{k}}^{T}\left(  \nabla \psi(y_{k}) - \nabla \psi(x_{k})\right) \| \|d_{k}\| \\ 
	\leq \|U^{T}_{i_k}\nabla^2 \psi (\bar{x}_k) U_{i_k}\|  \|d_{k}\|^2 
	&\leq& \Hp \|(1-\mu) x_{k} + \mu y_{k}\|^{p}  \|d_{k}\|^2.
\end{eqnarray*}	

\noindent From convexity of $\|\cdot\|^{p}$, for $p \geq 1$,  and the fact that $y_{k} = (1-\tau)x_{k} + \tau\left( x_{k} + U_{i_k}d_{k}\right)$ for some $\tau \in [0,1]$, we get:
	\vspace*{-0.1cm}
	\begin{eqnarray*}
		&& \langle U_{i_{k}}^{T}\left(\nabla \psi(y_{k}) - \nabla \psi(x_{k})\right) , d_{k}\rangle  \leq \Hp \left( (1-\mu) \|x_{k}\|^p + \mu\|y_{k}\|^{p}\right) \|d_{k}\|^2 \\
		&& = \Hp \left( (1-\mu) \|x_{k}\|^p + \mu\|(1-\tau)x_{k} + \tau\left( x_{k} + U_{i_k}d_{k}\right)\|^{p}\right) \|d_{k}\|^2 \\
		&& \leq \Hp \left( (1-\mu) \|x_{k}\|^p + \mu(1-\tau)\|x_{k}\|^p + \mu \tau \| x_{k} + U_{i_k}d_{k}\|^{p}\right) \|d_{k}\|^2.
	\end{eqnarray*}
	
	\noindent Since $\mu,\tau \in[0,1]$ and $\|a + b\|^p \leq 2^{p-1} \|a\|^p + 2^{p-1} \|b\|^p$ for $p \geq 1$, we get:
	\begin{eqnarray*}
		\langle U_{i_{k}}^{T}\left(\nabla \psi(y_{k}) - \nabla \psi(x_{k})\right) , d_{k}\rangle &\leq& \Hp \left( (1+(2^{p-1} -1)\mu \tau) \|x_{k}\|^p + 2^{p-1} \mu \tau \|d_{k}\|^{p}\right) \|d_{k}\|^2 
		\\ &\leq& 2^{p-1} \Hp \|x_{k}\|^{p}\|d_{k}\|^2 + 2^{p-1} \Hp \|d_{k}\|^{p+2}.
	\end{eqnarray*}
	
	\noindent Combining the inequality above with \eqref{eq:101}, we obtain:
	\vspace*{-0.2cm}
	\begin{equation*}
		F(x_{k+1}) \leq F(x_{k}) + 2^{p-1} \Hp \|x_{k}\|^{p}\|d_{k}\|^2 + 2^{p-1} \Hp \|d_{k}\|^{p+2} - \HF \|d_{k}\|^2 + \frac{L_{i_{k}}}{2} \|d_{k}\|^2. 
		\vspace*{-0.0cm}
	\end{equation*}
	
	\noindent From \eqref{eq:HF2} and \eqref{eq:104}, we have $\HF = 2^{p-1} \Hp \|d_{k}\|^{p} + 2^{p-1} \Hp\|x_{k}\|^{p} + \Hf$. Hence,  from \eqref{eq:75} and \eqref{eq:etaH}, we get the descent \eqref{eq:122}.
%	\begin{equation}
%		F(x_{k+1}) \leq F(x_{k}) - \left( \Hf - \frac{L_{i_{k}}}{2}\right)  \|d_{k}\|^2 \leq F(x_{k}) - \frac{\eta_{\min}}{2} \|d_{k}\|^2. \label{eq:124}
%	\end{equation}
	
\noindent Consider now case	\textit{3)}, i.e.,  A.5 of Assumption \ref{ass2} holds.  From Assumption \ref{ass1}, Assumption \ref{ass2}[A.5] and  \eqref{eq:107}, we have:
	\vspace*{-0.1cm}
	\begin{eqnarray*}
		F(x_{k+1}) &\leq& F(x_{k}) - \HF \|d_{k}\|^{2} + \frac{L_{i_{k}}}{2} \|d_{k}\|^2  + \dfrac{1}{2}\langle \nabla^2 \psi(x_{k})U_{i_{k}}d_{k},U_{i_{k}}d_{k}\rangle + \dfrac{L_{\psi}}{6} \|d_{k}\|^3 \\
		&\leq& F(x_{k}) - \HF \|d_{k}\|^{2} + \frac{L_{i_{k}}}{2} \|d_{k}\|^2  + \dfrac{1}{2}\|\nabla^2 \psi(x_{k})\| \|d_{k}\|^2 + \dfrac{L_{\psi}}{6} \|d_{k}\|^3 \\
		&\leq& F(x_{k}) - \HF \|d_{k}\|^{2} + \frac{L_{i_{k}}}{2} \|d_{k}\|^2 + \dfrac{L_{\psi}}{6} \|d_{k}\|^3 \\
		&& + \dfrac{1}{2}\left(\|\nabla^2 \psi(x_{k})-\nabla^2 \psi(x_{0})\| + \|\nabla^2 \psi(x_{0})\|\right)  \|d_{k}\|^2 \\
		&\leq& F(x_{k}) - \HF\|d_{k}\|^{2} + \frac{L_{i_{k}}}{2} \|d_{k}\|^2 + \dfrac{L_{\psi}}{6} \|d_{k}\|^3 \\
		&& + \dfrac{1}{2}\left(L_{\psi}\|x_{k}-x_{0}\| + \|\nabla^2 \psi(x_{0})\|\right)  \|d_{k}\|^2.
	\end{eqnarray*}
	
	\noindent From \eqref{eq:HF3} and \eqref{eq:104}, we have $\HF = \frac{L_{\psi}}{6} \|d_{k}\| + \frac{L_{\psi}}{2} \|x_{k}-x_{0}\| + \frac{1}{2} \|\nabla^2 \psi(x_{0})\| + \Hf$. Hence, by \eqref{eq:75} and \eqref{eq:etaH} we get \eqref{eq:122}.
%	\vspace*{-0.2cm}
%	\begin{equation}
%		F(x_{k+1}) \leq F(x_{k}) - \left( \Hf - \frac{L_{i_{k}}}{2}\right)  \|d_{k}\|^2 \leq F(x_{k}) - \frac{\eta_{\min}}{2} \|d_{k}\|^2. \label{eq:125}
%	\end{equation}

	\noindent Finally, consider the case \textit{4)}, i.e., A.6 of Assumption \ref{ass2} holds. Since $\psi$ is concave along coordinates, we have:
	\vspace*{-0.3cm}
	\begin{equation*}
		\psi(x_{k+1}) \leq \psi(x_{k}) + \langle U_{i_{k}}^{T}\nabla \psi(x_{k}), d_{k}\rangle.
	\end{equation*}
	
	\noindent Combining the inequality above with \eqref{lip2}, we obtain:
	\vspace*{-0.1cm}
	\begin{eqnarray}
		&& f(x_{k+1}) + \psi(x_{k+1}) \nonumber \\ 
		&& \leq f(x_{k}) + \langle \nabla f(x_{k}), U_{i_k}d_{k} \rangle + \frac{L_{i_{k}}}{2} \|d_{k}\|^2 + \psi(x_{k}) + \langle U_{i_{k}}^{T}\nabla \psi(x_{k}), d_{k}\rangle \nonumber \\  
		&& \leq F(x_{k}) + \langle U_{i_{k}}^{T}\nabla F(x_{k}), d_{k} \rangle + \frac{L_{i_{k}}}{2} \|d_{k}\|^2.  \label{eq:120}
	\end{eqnarray}	
	
	\noindent From \eqref{eq:d} and \eqref{eq:HF4}, we have $\Hf d_{k} = -U_{i_{k}}^{T}\nabla F(x_{k})$. Hence we obtain \eqref{eq:122}.	
\end{proof}

\noindent 	Note that previous lemma is valid independently of the way $i_{k}$ is choosen.  Further, from \eqref{descent2}, we have: 
\vspace*{-0.2cm}
\begin{equation*}
\frac{\eta_{\min}}{2} \sum_{j = 0}^{k} \|d_{j}\|^2
\leq \sum_{j = 0}^{k}  F(x_{j}) - F(x_{j+1}) \leq  F(x_{0}) - F^{*} < \infty.
\vspace*{-0.1cm}
\end{equation*}

\noindent with $F^{*}$ defined in \eqref{eq:prob}. This implies that $\|d_{j}\| \to 0$ as $j\to \infty$ in all four cases. Hence, there exists $B_{1} > 0$ such that:
\vspace*{-0.05cm}
\begin{equation}
\|d_{k}\| \leq B_{1} \quad \forall k \geq 0.  \label{eq:110}
\vspace*{-0.05cm}
\end{equation}

\noindent In order to prove  next lemma, we assume that the sequence $(x_{k})_{k\geq 0}$ generated by RCGD algorithm is bounded, i.e., there exists $B_{2} > 0$ such that:
\vspace*{-0.05cm}
\begin{equation}
\|x_{k}\| \leq B_{2} \quad  \forall k \geq 0.  \label{eq:111}
\vspace*{-0.05cm}
\end{equation}

\noindent In Section \ref{Sec:SufCond} we derive sufficient conditions for \eqref{eq:111} to hold. Let us define: 
\begin{equation}
	\HFmaxc= \max_{ \{i=1:N, x \in \overline{\text{conv}}\{(x_{k})_{k\geq 0}\}} \|U^{T}_{i}\nabla^2 F (x)\| < \infty, \label{HFmax:cyc}  
\end{equation}

\noindent that is bounded since we assume $(x_{k})_{k\geq 0}$ bounded. For simplicity, consider for the random variant:
\begin{equation*}
	\footnotesize{
	C_1 = \left\lbrace\begin{array}{ll} \dfrac{\eta_{\min}}{2N (\frac{\Hp}{2}B_2^p + \frac{L_{\psi}}{6} B_1 + \Hfmax)^2}, 
		\text{ if A.4 and A.5 hold} \\
		\dfrac{\eta_{\min}}{2N (2^{p-1}\Hp B_1^{p} + 2^{p-1}\Hp B_2^{p} + \Hfmax)^2}, \text{ if  A.4 holds} \\
		\dfrac{\eta_{\min}}{2N \left(L_{\psi}\left( \frac{B_1}{6} + \frac{B_2}{2} + \frac{1}{2}\|x_{0}\| \right)  + \frac{1}{2}\|\nabla^2 \psi(x_{0})\| + \Hfmax\right)^2 }, \text{if A.5 holds } \\
		\dfrac{\eta_{\min}}{2N\Hfmax^2}, \text{if A.6 holds}
	\end{array}\right.  }
\end{equation*}
and for the cyclic variant: 
\begin{equation*}
	\footnotesize{
	C_2 = \left\lbrace\begin{array}{ll} \dfrac{\eta_{\min}}{4 (\frac{\Hp}{2}B_2^p + \frac{L_{\psi}}{6} B_1 + \Hfmax)^2 + 4N\HFmaxc^2}, 
		\text{ if A.4 and A.5 hold} \\
		\dfrac{\eta_{\min}}{4 (2^{p-1}\Hp B_1^{p} + 2^{p-1}\Hp B_2^{p} + \Hfmax)^2 + 4(N-1)\HFmaxc^2}, \text{ if  A.4 holds} \\
		\dfrac{\eta_{\min}}{4 \left(L_{\psi}\left( \frac{B_1}{6} + \frac{B_2}{2} + \frac{1}{2}\|x_{0}\| \right)  + \frac{1}{2}\|\nabla^2 \psi(x_{0})\| + \Hfmax\right)^2 + 4(N-1)\HFmaxc^2}, \text{if A.5 holds} \\
		\dfrac{\eta_{\min}}{4\Hfmax^2 + 4(N-1)\HFmaxc^2} \text{if A.6 holds}.
	\end{array}\right.  }
\end{equation*}

\begin{lemma}
\label{lem:RCGD2}
Let  assumptions of Lemma \ref{lem:desRCGD2} hold. Additionally, let  the sequence $(x_{k})_{k\geq 0}$ generated by algorithm CGD be bounded. Then, the following descents hold: \\
\vspace*{-0.1cm}
i) If $i_{k}$ is choosen uniformly at random, we have:
\begin{equation}
	\mathbb{E}[F(x_{k+1}) \ | \ x_{k}] \leq F(x_{k}) - C_{1} \|\nabla F(x_{k})\|^2 . \label{eq:126}
\end{equation}

\noindent ii) If $i_{k}$ is choosen cyclic, we have:
\begin{equation}
	F(x_{k+N})  \leq F(x_{k}) - C_{2} \|\nabla F(x_{k})\|^2.  \label{eq:131}
\end{equation}
\end{lemma}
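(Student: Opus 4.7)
The statement is the CGD analogue of Lemma~\ref{lem2} for CPG, so the proof will follow the same template but with two essential changes: (a) instead of using the optimality condition \eqref{oc3}, we exploit the closed-form expression \eqref{eq:d} which already reads $U^{T}_{i_k}\nabla F(x_k) = -H_{F_k} d_k$; (b) all mean-value arguments involving $\nabla^2\psi$ are replaced by arguments that use the uniform upper bound on the adaptive stepsize $H_{F_k}$.

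\textbf{Step 1 (uniform bound on $H_{F_k}$).} For each of the four stepsize rules in Table~\ref{table:stepsizes}, I would use \eqref{eq:110}, \eqref{eq:111}, the identity $\|d_k\|=\alpha_k$ from \eqref{eq:104}, and $\Hf\le\Hfmax$ to upper bound the corresponding expression \eqref{eq:HF1}--\eqref{eq:HF4} term by term. This produces a constant $\bar H$ (case-specific) such that $H_{F_k}\le\bar H$ along the whole trajectory; the four resulting values of $\bar H$ are precisely the quantities appearing inside the squared brackets in the definitions of $C_1$ and $C_2$ (for example, in case A.4+A.5 we get $\bar H=\tfrac{\Hp}{2}B_2^p+\tfrac{L_\psi}{6}B_1+\Hfmax$). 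This step is straightforward but is the place where the four cases must be spelled out separately.

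\textbf{Step 2 (random case).} Square \eqref{eq:d} to get $\|U^{T}_{i_k}\nabla F(x_k)\|^2 = H_{F_k}^2\|d_k\|^2$. Using that $i_k$ is uniform,
\begin{equation*}
\tfrac{1}{N}\|\nabla F(x_k)\|^2 \;=\; \mathbb{E}\bigl[\|U^{T}_{i_k}\nabla F(x_k)\|^2 \mid x_k\bigr] \;=\; \mathbb{E}\bigl[H_{F_k}^2\|d_k\|^2 \mid x_k\bigr] \;\le\; \bar H^2\,\mathbb{E}\bigl[\|d_k\|^2 \mid x_k\bigr].
\end{equation*}
Combining this with the conditional expectation of the descent \eqref{descent2} yields \eqref{eq:126} with the stated $C_1$.

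\textbf{Step 3 (cyclic case).} Here I would mirror the proof of Lemma~\ref{lem2}(ii). With the convention that block $i_k$ is updated at iteration $k+i_k-1$, the identity \eqref{eq:d} gives
\begin{equation*}
U^{T}_{i_k}\nabla F(x_k)=U^{T}_{i_k}\bigl(\nabla F(x_k)-\nabla F(x_{k+i_k-1})\bigr)-H_{F_{k+i_k-1}}\,d_{k+i_k-1}.
\end{equation*}
Applying $\|a+b\|^2\le 2\|a\|^2+2\|b\|^2$, summing over $i_k=1{:}N$, bounding the first summand by Lemma~\ref{lemma:MVI} with $\|U_{i_k}^T\nabla^2 F(\cdot)\|\le\HFmaxc$ (the $i_k=1$ term vanishes), and the second summand via $H_{F_{k+i_k-1}}\le\bar H$, I obtain
\begin{equation*}
\|\nabla F(x_k)\|^2 \le 2\bar H^2\sum_{i_k=1}^N\|d_{k+i_k-1}\|^2 + 2(N-1)\HFmaxc^{\,2}\|x_{k+N}-x_k\|^2.
\end{equation*}
Since only one block is updated per iteration, $\|x_{k+i_k-1}-x_k\|^2\le\|x_{k+N}-x_k\|^2$ and $\sum\|d_{k+i_k-1}\|^2=\|x_{k+N}-x_k\|^2$. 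Summing the descent \eqref{descent2} from $k$ to $k+N-1$ gives $F(x_{k+N})\le F(x_k)-\tfrac{\eta_{\min}}{2}\|x_{k+N}-x_k\|^2$, and combining the two inequalities produces \eqref{eq:131} with the claimed $C_2$.

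\textbf{Main obstacle.} The routine part is the algebra; the delicate point is bookkeeping across the four stepsize rules, because the expression for $\bar H$ changes in each case and, in case A.6, the $\nabla^2\psi$-term is absent so the bound is tighter. I would handle this by stating the bound on $H_{F_k}$ once per case and then running Steps~2 and~3 in a case-independent manner, letting $\bar H$ stand for the appropriate constant.
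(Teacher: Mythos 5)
Your proposal is correct and follows essentially the same route as the paper's proof: a uniform case-by-case bound on $\HF$ from \eqref{eq:104}, \eqref{eq:110}, \eqref{eq:111}, the identity $\mathbb{E}[\|U_{i_k}^T\nabla F(x_k)\|^2\mid x_k]=\tfrac{1}{N}\|\nabla F(x_k)\|^2$ combined with \eqref{eq:d} and \eqref{descent2} in the random case, and the decomposition $U_{i_k}^T\nabla F(x_k)=U_{i_k}^T(\nabla F(x_k)-\nabla F(x_{k+i_k-1}))+H_{F_{k+i_k-1}}d_{k+i_k-1}$ with Lemma \ref{lemma:MVI} and \eqref{HFmax:cyc} in the cyclic case. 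No substantive differences.
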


\begin{proof}
	Consider the case 1), i.e., when A.4 and A.5 of Assumption \ref{ass2} hold.  First we analyse the case when  $i_{k}$ is updated uniformly at random. Then, taking the expectation on both sides of the inequality \eqref{descent2} w.r.t. $x_{k}$  and using \eqref{eq:d}, we have: 
	\vspace*{-0.1cm}
	\begin{equation*}
		\mathbb{E}[F(x_{k+1}) \ | \ x_{k}] \leq F(x_{k}) - \dfrac{\eta_{\min}}{2} \mathbb{E}\left[\frac{1}{\HF^2}\|U_{i_{k}}^{T} \nabla F(x_{k})\|^2 \ | \ x_{k}\right]. 
		\vspace*{-0.1cm}
	\end{equation*}
	
	\noindent Combining \eqref{eq:104} and \eqref{eq:110}, we get that $\alpha_{k} = \|d_k\| \leq B_{1}$. Further, from \eqref{eq:HF1}, \eqref{eq:111} and \eqref{eq:etaH}, we have
	$ \HF \leq \dfrac{\Hp}{2}B_2^p + \dfrac{L_{\psi}}{6} B_1 + \Hfmax. $
	This implies that:
	\begin{equation*}
		\mathbb{E}[F(x_{k+1}) \ | \ x_{k}] \leq F(x_{k}) - \dfrac{\eta_{\min} \cdot \mathbb{E}\left[\|U_{i_{k}}^{T} \nabla F(x_{k})\|^2 \ | \ x_{k}\right]}{2\left( \frac{\Hp}{2}B_2^p + \frac{L_{\psi}}{6} B_1 + \Hfmax\right)^2}. 
	\end{equation*}
	\noindent Since
	$ \mathbb{E}[\| U^{T}_{i_{k}} \nabla F(x_{k})\|^2 \ | \ x_{k} ]  = \dfrac{1}{N} \|\nabla F(x_{k})\|^2$, 
	the statement follows. Note that \eqref{eq:126} can be proved  similarly for the other choices of the stepsize.  Second, let us analyse the case when  $i_{k}$ is updated cyclic. For simplicity, consider the $k$th iteration such that the first block of coordinates is updated. Then: 
	\vspace*{-0.2cm}
	\begin{align*}
	\|\nabla F(x_{k})\|^{2} &= \sum_{i_{k} = 1}^{N} \| U_{i_{k}}^{T}\nabla F(x_{k})\|^2 \\
	&= \sum_{i_{k} = 1}^{N} \| U_{i_{k}}^{T}\left( \nabla F(x_{k}) - \nabla F(x_{k + i_{k} - 1   })  + \nabla F(x_{k + i_{k} -1 })\right) \|^2.
	\end{align*}
	\noindent Note that in the $(k+i_{k}-1)$th iteration, we update the $i_{k}$th block of coordinates.  Hence, using \eqref{eq:d}, we have:
	\begin{align}
		\|\nabla F(x_{k})\|^{2} 
		&= \sum_{i_{k} = 1}^{N} \| U_{i_{k}}^{T}\left( \nabla F(x_{k}) - \nabla F(x_{k + i_{k} - 1   }) \right) + H_{F_{k + i_{k} -1}} d_{k + i_{k} -1}   \|^2 \nonumber \\ 
		& \leq \sum_{i_{k} = 1}^{N} 2 \left(  \| U_{i_{k}}^{T}\left( \nabla F(x_{k}) - \nabla F(x_{k + i_{k} - 1   }) \right\|^2 + H_{F_{k + i_{k} -1}}^2 \|d_{k + i_{k} -1}   \|^2\right). \label{eq:cyc4}
	\end{align}
	
	\noindent Using Lemma \ref{lemma:MVI}, we have that there exists $z_{i_{k}} \in [x_{k}, x_{k+i_{k}}]$ such that: 
	\begin{equation*}
		\| U_{i_{k}}^{T}\left( \nabla F(x_{k}) - \nabla F(x_{k + i_{k} - 1   }) \right\| \leq \|U_{i_{k}}^{T}\nabla^2 F(z_{i_{k}}) \| \|x_{k} - x_{k + i_{k} - 1} \|.
	\end{equation*}

	\noindent  Recall that $ \HF \leq \dfrac{\Hp}{2}B_2^p + \dfrac{L_{\psi}}{6} B_1 + \Hfmax$ for all $k \geq 0$. Using $\|x_{k+N} - x_{k}\|^2 = \displaystyle \sum_{i_{k}=1}^{N} \|d_{k+i_{k}-1}\|^2$, \eqref{HFmax:cyc} and \eqref{eq:cyc4}, we obtain: 
	\begin{align*}
		\|\nabla F(x_{k})\|^{2}
		& \leq 2(N-1)\HFmaxc + 2\left( \dfrac{\Hp}{2}B_2^p + \dfrac{L_{\psi}}{6} B_1 + \Hfmax\right) ^2 \|x_{k+N}-x_{k} \|^2. \label{eq:cyc4}
	\end{align*}
	
	\noindent Moreover, by inequality \eqref{descent2}, we get
$
		F(x_{k+N}) \leq F(x_{k}) - \dfrac{\eta_{\min}}{2} \sum_{i_{k} = 1}^{N} \|d_{k+i_{k} -1}\|^2. 
$

	\noindent Combining the last two inequalities we get the statement. Note that, the other cases can be proved similarly. 
\end{proof}

\noindent Note that in the case 4), i.e., when A.6 of Assumption \ref{ass2} holds, we don't need to require that the sequence $(x_{k})_{k\geq 0}$ is bounded.  Moreover, in this case $\psi$  can be only once differentiable.  Next,  we provide sufficient conditions when the sequences generated by our two random coordinate descent algorithms are bounded.

%%%%%%%%%%%%%%%%%%%%%%%%%
%%%%%%%%%%%%%%%%%%%%%5

\section{Sufficient conditions for bounded iterates}
\noindent Note that  Lemmas \ref{lem1} and  \ref{lem:desRCGD2} prove that the sequence  $\left(F(x_{k})\right)_{k\geq 0}$ generated by the algorithms CPG or CGD (with appropiate stepsize rules) is nonincreasing, i.e., $F(x_{k+1}) \leq F(x_{k})$ for all $k \geq 0$.  However, in order to prove Lemmas \ref{lem2} and \ref{lem:RCGD2}, we need to assume that the sequence $(x_{k})_{k\geq0}$ is bounded. In this section we present sufficient conditions for boundedness. One natural example  is when the level set is bounded: 
\vspace*{-0.2cm}
\begin{equation*}
\mathcal{L}_{F}(x_{0})=\left\{x:  \,F(x)\leq F(x_{0}) \right\}.
 \vspace*{-0.2cm}
\end{equation*} 
Note that uniformly convex functions have bounded level sets. Indeed, if $F$ is uniformly convex, with constant $q>1$, then it satisfies \cite{Nes:19}: 
\vspace*{-0.1cm}
\begin{equation}
F(y) \geq F(x) + \langle \nabla F(x), y - x \rangle + \frac{\sigma_q (q-1)}{q} \|y - x\|^{\frac{q}{q-1}} \quad \forall x, y \in \mathbb{R}^{n}. \label{eq:unifcon}
\vspace*{-0.1cm}
\end{equation}

\noindent Then, for $x = x^{*}$ and $y \in \mathcal{L}_{F}(x_{0})$, we have
\vspace*{-0.1cm}
\begin{equation*}
\frac{\sigma_q (q-1)}{q} \|x^{*} - y\|^{\frac{q}{q-1}} \leq F(y) - F(x^{*}) \leq F(x^0) - F(x^{*}) < \infty.
\vspace*{-0.1cm}
\end{equation*}

\noindent Moreover, note that if $F$ is uniformly convex, then it has a unique minimizer. In  the  next lemma we show that if $f$ is nonconvex and $\psi$ uniformly convex with constant $q\in (1,2)$, then the level set  $\mathcal{L}_{F}(x_{0})$ is bounded.

\begin{lemma}
	Let $\mathcal{X}^{*}$ be the set of optimal solutions of problem \eqref{eq:prob}. Assume  $\psi$ differentiable and uniformly convex function  with constant $q\in (1,2)$ and the function $f$ satisfies Assumption \ref{ass1}[A.1]. Then:
	\vspace*{-0.1cm} 
	\begin{equation}
	\|x -x^{*}\| \leq \max \left\lbrace \! \left(\dfrac{(2(F(x_{0}) - F(x^{*}))+ L) q}{2(q-1)\sigma_{q}}\right)^{\frac{q-1}{2-q}}\!\!, 1      \right\rbrace \\ \;\; \forall x \in \mathcal{L}_{F}(x_{0}),  x^{*} \in \mathcal{X}^{*}.  \label{eq:bound}
	\end{equation}
\noindent Moreover, if $\mathcal{X}^{*}$ is bounded,  then the level set $\mathcal{L}_{F}(x_{0})$ is also bounded
\end{lemma}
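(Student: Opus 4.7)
The plan is to combine the uniform convexity of $\psi$ with a Lipschitz‐type upper bound on the deviation of $f$ from its linearization at $x^{*}$, and then exploit the growth mismatch between the exponents $q/(q-1) > 2$ and $2$ for $q \in (1,2)$.

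First, I would fix an arbitrary optimum $x^{*} \in \mathcal{X}^{*}$. Since $F = f + \psi$ is differentiable and $x^{*}$ is a minimizer, $\nabla f(x^{*}) + \nabla\psi(x^{*}) = 0$. From Assumption~\ref{ass1}[A.1] one extracts a global Lipschitz‐gradient constant for $f$ (equivalently, a descent‐style inequality), so that for all $x$ we have $f(x) \geq f(x^{*}) + \langle \nabla f(x^{*}), x-x^{*}\rangle - \tfrac{L}{2}\|x-x^{*}\|^{2}$ with the $L$ of \eqref{Lip} (up to a factor depending on $N$ that I will absorb into $L$). Applying \eqref{eq:unifcon} at $x^{*}$ with the vanishing gradient condition above and adding, I get
\begin{equation*}
F(x) - F(x^{*}) \;\geq\; \frac{\sigma_{q}(q-1)}{q}\|x-x^{*}\|^{q/(q-1)} \;-\; \frac{L}{2}\|x-x^{*}\|^{2}.
\end{equation*}

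Next, I specialize to $x \in \mathcal{L}_{F}(x_{0})$, so $F(x) \leq F(x_{0})$. Write $r = \|x-x^{*}\|$ and split into cases. If $r \leq 1$, the bound \eqref{eq:bound} is trivial because the $\max$ on the right‐hand side is at least $1$. If $r > 1$, I divide the chain of inequalities
\begin{equation*}
F(x_{0}) - F(x^{*}) + \tfrac{L}{2}r^{2} \;\geq\; \tfrac{\sigma_{q}(q-1)}{q}\, r^{q/(q-1)}
\end{equation*}
by $r^{2}$. Using $r > 1$ to bound $1/r^{2} < 1$ on the left, and writing $q/(q-1) - 2 = (2-q)/(q-1) > 0$ on the right, I obtain
\begin{equation*}
F(x_{0}) - F(x^{*}) + \tfrac{L}{2} \;\geq\; \tfrac{\sigma_{q}(q-1)}{q}\, r^{(2-q)/(q-1)},
\end{equation*}
which I rearrange to precisely \eqref{eq:bound}. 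Thus in either case the stated estimate holds.

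For the second statement, observe that the right‐hand side of \eqref{eq:bound} does not depend on the choice of $x^{*} \in \mathcal{X}^{*}$, since $F(x^{*})$ is the common optimal value $F^{*}$. Fixing any particular $x^{*} \in \mathcal{X}^{*}$ and applying the triangle inequality, $\|x\| \leq \|x-x^{*}\| + \|x^{*}\|$, the bound \eqref{eq:bound} together with boundedness of $\mathcal{X}^{*}$ yields a uniform bound on $\|x\|$ for $x \in \mathcal{L}_{F}(x_{0})$.

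The most delicate step will be justifying the quadratic ascent/descent estimate for $f$ with an explicit global constant $L$ when only block coordinate‐wise Lipschitz gradients are assumed in A.1; a standard aggregation (as in \cite{Nes:10}) suffices, though one must be careful to absorb the $N$‐dependent factor into the constant $L$ appearing in \eqref{eq:bound}. Everything else is a straightforward algebraic manipulation exploiting $q/(q-1) > 2$.
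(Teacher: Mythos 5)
Your proposal is correct and rests on exactly the same two ingredients as the paper's proof: the uniform convexity inequality for $\psi$ evaluated at $x^{*}$ and the quadratic lower bound $f(x)-f(x^{*})-\langle\nabla f(x^{*}),x-x^{*}\rangle\geq -\tfrac{L}{2}\|x-x^{*}\|^{2}$ with $L=NL_{\max}$ from Lemma 2 in \cite{Nes:10}, combined via $\nabla f(x^{*})+\nabla\psi(x^{*})=0$. The only difference is cosmetic: you argue directly by dividing by $r^{2}$ in the case $r>1$, whereas the paper runs the same algebra as a proof by contradiction using the sign of $1-\|\bar{x}-x^{*}\|^{2}$; both yield the identical constant in \eqref{eq:bound}.
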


\begin{proof}
	We prove \eqref{eq:bound} by contradiction. Assume for some $\bar{x} \in \mathcal{L}_{F}(x_{0})$ and $x^{*} \in \mathcal{X}^{*}$ that
	\vspace*{-0.1cm} 
	\begin{equation}
	\|\bar{x} -x^{*}\| > \max \left\lbrace \left(\dfrac{(2(F(x_{0}) - F(x^{*}))+ L) q}{2(q-1)\sigma_{q}}\right)^{\frac{q-1}{2-q}}, 1      \right\rbrace. \label{eq:114}
	\end{equation}
	
	\noindent Then, 
	$  \|\bar{x} -x^{*}\|^{\frac{2-q}{q-1}} > \dfrac{(2(F(x_{0}) - F(x^{*}))+ L) q}{2(q-1)\sigma_{q}}, $
	or equivalently: 
	\vspace*{-0.1cm} 
	\begin{equation}
	\dfrac{(q-1)\sigma_{q}}{q}  \|\bar{x} -x^{*}\|^{\frac{2-q}{q-1}} > \dfrac{L}{2} + F(x_{0}) - F(x^{*}) \geq \dfrac{L}{2}. \label{eq:93}
	\end{equation}
	
	\noindent Since $\psi$ is uniformly convex, we have:
	\vspace*{-0.1cm} 
	\begin{equation*}
	\psi(x) \geq \psi(y) + \langle \nabla \psi(y), x - y \rangle + \frac{\sigma_q (q-1)}{q} \|y - x\|^{\frac{q}{q-1}} \quad \forall x, y \in \mathbb{R}^{n}. 
	\vspace*{-0.1cm} 
	\end{equation*}
	
	\noindent Taking $y=x^{*}$ and $x=\bar{x}$, we get:
	\vspace*{-0.1cm} 	
	\begin{equation}
	\dfrac{\sigma_q (q-1)}{q} \|\bar{x} - x^{*}\|^{\frac{q}{q-1}} \leq \psi(\bar{x}) - \psi(x^{*}) - \langle \nabla \psi(x^{*}), \bar{x} - x^{*} \rangle.  \label{eq:94}
	\vspace*{-0.1cm} 
	\end{equation} 
	
	\noindent From  Assumption \ref{ass1}[A.1] and Lemma 2 in \cite{Nes:10}, we have:
	\vspace*{-0.1cm} 
	\begin{equation}
	- \dfrac{L}{2} \|\bar{x} - x^{*}\|^{2} \leq f(\bar{x}) - f(x^{*}) - \langle \nabla f(x^{*}),\bar{x} -x^{*} \rangle,  \label{eq:95}
	\vspace*{-0.1cm} 
	\end{equation}
	
	\noindent where $L = N L_{\max}$ and  $L_{\max} = \max_{i=1:N}  L_{i}$. Using the optimality condition for the problem \eqref{eq:prob}, we have $\nabla f(x^{*}) + \nabla \psi (x^{*})=0$, hence by \eqref{eq:93} and \eqref{eq:95}, we get:
	\vspace*{-0.1cm} 
	\begin{equation*}
	\dfrac{\sigma_q (q-1)}{q} \|\bar{x} - x^{*}\|^{\frac{q}{q-1}} \leq F(\bar{x}) - F(x^{*}) + \dfrac{L}{2} \|\bar{x} - x^{*}\|^{2} .
	\vspace*{-0.1cm} 
	\end{equation*}
	
	\noindent Since $\bar{x} \in \mathcal{L}_{F}(x_{0})$, we have:
	\vspace*{-0.1cm} 
	\begin{equation*}
		F(x_{0}) - F(x^{*}) \geq	F(\bar{x}) - F(x^{*}) \geq \left( \dfrac{\sigma_q (q-1)}{q} \|\bar{x} - x^{*}\|^{\frac{2-q}{q-1}} - \dfrac{L}{2}\right)  \|\bar{x} - x^{*}\|^{2}.
	\end{equation*}
	
	\noindent Combining the inequality above with \eqref{eq:93}, we get:
	\begin{equation*}
		\dfrac{\sigma_q (q-1)}{q} \|\bar{x} - x^{*}\|^{\frac{2-q}{q-1}} - \dfrac{L}{2} > \left( \dfrac{\sigma_q (q-1)}{q} \|\bar{x} - x^{*}\|^{\frac{2-q}{q-1}} - \dfrac{L}{2}\right)  \|\bar{x} - x^{*}\|^{2}, \end{equation*}
		
	\noindent or equivalently	
	\vspace*{-0.1cm} 
	\begin{equation*}
	\dfrac{\sigma_q (q-1)}{q} \|\bar{x} - x^{*}\|^{\frac{2-q}{q-1}} \left( 1 -  \|\bar{x} - x^{*}\|^{2}\right)  \geq \dfrac{L}{2}\left( 1 -  \|\bar{x} - x^{*}\|^{2}\right).  
	\end{equation*}
	
	\noindent From \eqref{eq:114}, we have  $1 -  \|\bar{x} - x^{*}\|^{2} < 0$, hence
	\begin{equation}
	\dfrac{\sigma_q (q-1)}{q} \|\bar{x} - x^{*}\|^{\frac{2-q}{q-1}} \leq \dfrac{L}{2}. \label{eq:115} 
	\end{equation}
Therefore, relation \eqref{eq:115} is a contradiction with \eqref{eq:93}. Hence \eqref{eq:bound} is proved. 
\end{proof}

\section{Convergence analysis: nonconvex case}
Recall that in Lemmas \ref{lem2} and \ref{lem:RCGD2} we proved that the sequence $\left(x_{k}\right)_{k\geq 0}$ generated by the two algorithms CPG or CGD satisfy the following descent for some appropriate positive constant $C$:  

\noindent I) If $i_{k}$ is chosen uniformly at random, then:
\begin{equation}
F(x_{k}) - \mathbb{E}[F(x_{k+1}) \ | \ x_{k}] \geq C\|\nabla F(x_{k})\|^2.  \label{eq:112}
\end{equation}

\noindent II) If $i_{k}$ is chosen  cyclic, then:
\begin{equation}
	F(x_{k}) - F(x_{k+N})  \geq C\|\nabla F(x_{k})\|^2.  \label{eq:132}
\end{equation}

\subsection{Sublinear convergence}
Based on the descent inequalities above, which we proved without requiring the full gradient   $\nabla F$ to be Lipschitz continuous, as it is usually considered in the existing literature, we derive in this section convergence rates for our algorithms depending on  the properties~of~$F$.

\begin{theorem}
	Choose accuracy level $\varepsilon > 0$ and confidence level $\rho \in (0,1)$. Let the sequence  $(x_{k})_{k\geq 0}$ be generated by the algorithms CPG or CGD with $i_{k}$ chosen uniformly at random and satisfying \eqref{eq:112}. If
	\begin{equation}
	k \geq \dfrac{F(x_{0}) - F^{*}}{\varepsilon\rho C}, \label{eq:118}
	\end{equation}
	
	\noindent then in probability we have  
	$$ \mathbb{P}\left[ \displaystyle \min_{0\leq i\leq k-1}  \|\nabla F(x_{i})\|^2 \leq \varepsilon\right] \geq 1 - \rho.$$
\end{theorem}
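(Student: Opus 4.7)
The plan is to run a standard telescoping/expectation argument on the descent inequality \eqref{eq:112} and then convert the resulting expectation bound into a high-probability bound via Markov's inequality applied to the running minimum of $\|\nabla F(x_i)\|^2$.

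First, I would take total expectations in \eqref{eq:112}, obtaining
\begin{equation*}
\mathbb{E}[F(x_{i})] - \mathbb{E}[F(x_{i+1})] \;\geq\; C\,\mathbb{E}\!\left[\|\nabla F(x_{i})\|^2\right] \quad \forall i \geq 0,
\end{equation*}
and then sum from $i=0$ to $k-1$. The telescoping cancellation together with the trivial bound $\mathbb{E}[F(x_k)] \geq F^*$ (from Assumption \ref{ass1}[A.3]) yields
\begin{equation*}
\sum_{i=0}^{k-1} \mathbb{E}\!\left[\|\nabla F(x_{i})\|^2\right] \;\leq\; \frac{F(x_{0}) - F^{*}}{C}.
\end{equation*}

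Next, I would use the elementary inequality $\min_{0\leq i \leq k-1} \|\nabla F(x_i)\|^2 \leq \frac{1}{k}\sum_{i=0}^{k-1} \|\nabla F(x_i)\|^2$, which after taking expectation gives
\begin{equation*}
\mathbb{E}\!\left[\min_{0\leq i \leq k-1} \|\nabla F(x_{i})\|^2\right] \;\leq\; \frac{F(x_{0}) - F^{*}}{kC}.
\end{equation*}
Finally, applying Markov's inequality to the nonnegative random variable $\min_{0\leq i \leq k-1} \|\nabla F(x_i)\|^2$, I would get
\begin{equation*}
\mathbb{P}\!\left[\min_{0\leq i \leq k-1}\|\nabla F(x_{i})\|^2 \geq \varepsilon\right] \;\leq\; \frac{F(x_{0})-F^{*}}{k\,C\,\varepsilon},
\end{equation*}
and the assumption \eqref{eq:118} on $k$ makes the right-hand side at most $\rho$, so the complementary event has probability at least $1-\rho$, which is the claim.

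There is essentially no hard step: the descent \eqref{eq:112} is already established in Lemmas \ref{lem2} and \ref{lem:RCGD2}, and the remaining work is a telescoping sum plus Markov's inequality. The only subtle point I would want to be careful about is that the descent \eqref{eq:112} is conditional on $x_k$, so I must take the tower property correctly before telescoping; once that is done, the argument is routine.
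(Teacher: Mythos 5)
Your proposal is correct and follows essentially the same route as the paper: a telescoped expectation bound from \eqref{eq:112} combined with the inequality $\min_{0\leq i\leq k-1}\|\nabla F(x_i)\|^2 \leq \frac{1}{k}\sum_{i=0}^{k-1}\|\nabla F(x_i)\|^2$ and Markov's inequality. The only (immaterial) difference is that you apply Markov to the running minimum after bounding its expectation, whereas the paper applies Markov to the sum and then passes to the minimum; the resulting bound is identical.
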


\begin{proof}
Since 
$ \displaystyle k \min_{0\leq i\leq k-1} \|\nabla F(x_{i})\|^2 \leq  \sum_{i=0}^{k-1} \|\nabla F(x_{i})\|^2$,
we have that
\vspace*{-0.1cm} 
\begin{equation}
\mathbb{P} \left[ \min_{0\leq i\leq k-1} \|\nabla F(x_{i})\|^2 \geq \varepsilon\right] \leq \mathbb{P} \left[ \dfrac{1}{k} \sum_{i=0}^{k-1} \|\nabla F(x_{i})\|^2 \geq \varepsilon \right] . \label{eq:116}
\end{equation}
	
\noindent Further, from  Markov inequality and basic properties of expectation, we get:
\vspace*{-0.1cm} 
\begin{equation}
\mathbb{P} \left[ \sum_{i=0}^{k-1} \|\nabla F(x_{i})\|^2 \geq k \varepsilon \right]  \leq \dfrac{1}{k \varepsilon} \mathbb{E}\left[ \sum_{i=0}^{k-1} \|\nabla F(x_{i})\|^2 \right] \leq \dfrac{1}{k \varepsilon} \sum_{i=0}^{k-1} \mathbb{E}\left[ \|\nabla F(x_{i})\|^2 \right]. \label{eq:117}
\end{equation}
	
\noindent On the other hand, taking the expectation in the inequality \eqref{eq:112}, w.r.t. $\{x_{0},...,x_{k-1}\}$, we have
$
\mathbb{E}[F(x_{k+1})] \leq \mathbb{E}[F(x_{k})] - C \cdot \mathbb{E}[\|\nabla F(x_{k})\|^2]$.
This implies that:
\begin{equation}
	C \sum_{i=0}^{k-1} \mathbb{E}[\|\nabla F(x_{i})\|^2] \leq \sum_{i=0}^{k-1} \left( \mathbb{E}[F(x_{i})] - \mathbb{E}[F(x_{i+1})] \right) \leq  F(x_{0}) - F^{*}, \label{eq:123}
\end{equation}

\noindent Combining the previous relations, we obtain:
\vspace*{-0.1cm} 
\begin{equation*}
\mathbb{P} \left[ \min_{0\leq i\leq k-1} \|\nabla F(x_{i})\|^2 \geq \varepsilon\right] \leq \dfrac{F(x_{0}) - F^{*}}{k \varepsilon C} \leq \rho,
\end{equation*}
which proves our statement.
\end{proof}

\begin{remark}
\label{rem:gradR}
Note that from inequality \eqref{eq:123}, we also obtain: 
\begin{equation*}
 \min_{0\leq i\leq k-1} \mathbb{E} \left[ \|\nabla F(x_{i})\|^2 \right] \leq \dfrac{F(x_{0}) - F^{*}}{kC}.  
\end{equation*}
\end{remark}

\begin{remark}
\label{rem:gradC}
\noindent Using a similar reasoning, in the cyclic case, we also have:
\begin{equation*}
 \min_{0\leq i\leq k-1} 	\| \nabla F(x_{i}) \|^2 \leq \dfrac{N\left( F(x_{0}) -   F^{*}  \right) }{Ck}.
\end{equation*}
\end{remark}

%%%%%%%%%%%%%%%%%%%%%%%%%%%5

%\vspace*{-0.3cm}

\subsection{Better  convergence under KL}
 In this section we derive convergence rates for our algorithms when the objective function $F$ satisfies the KL property, see Definition  \ref{def:kl}.  In this section we consider the particular form $\kappa (t) = \sigma_q^{\frac{1}{q}} \frac{q}{q-1} t^{\frac{q-1}{q}}$, with $q >1$ and $\sigma_q>0$. Then, the  KL property establishes the following local geometry of the nonconvex function $F$ around a compact set~$\Gamma$:
\vspace*{-0.1cm}
\begin{equation}
	\label{eq:kl}
	F(x) - F_*  \leq \sigma_q \|\nabla F(x)\|^q \quad   \forall x\!: \;  \text{dist}(x, \Gamma) \leq \gamma, \; F_* < F(x) < F_* + \epsilon.  
	\vspace*{-0.1cm}
\end{equation}

\noindent  Note that the relevant aspect of the KL property is when $\Gamma$ is a subset of critical points for $F$, i.e., $\Gamma \subseteq \{x: \nabla F (x)=0 \}$. 
In this section we assume that F satisfies the KL property \eqref{eq:kl} in a subset of critical points of $F$. \blue{In the next lemma, we derive} some basic properties for  $X(x_{0})$,  the limit points of the sequence $(x_{k})_{k\geq 0}$,  and in the proof we use the supermantigale convergence theorem (Theorem 1 in \cite{RobSie:71}). 

\begin{lemma}
	\label{LemmaKL}
	Let the sequence $(x_{k})_{k\geq 0}$  generated by  algorithms CPG or CGD, respectively, be bounded and $i_{k}$ be chosen uniformly  random. If  Assumption \ref{ass1}  and descent \eqref{eq:112} hold, then  $X(x_{0})$ is  compact set,  $F (X(x_{0})) = F_*$,  $F(x_{k})  \to F_{*}$ a.s., and $\nabla F(X(x_{0}))=0$,    $\| \nabla F(x_{k})\|  \to 0$ a.s.
\end{lemma}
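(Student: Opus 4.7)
The plan is to exploit the descent inequality \eqref{eq:112} together with a supermartingale convergence argument (Robbins--Siegmund, \cite{RobSie:71}) to establish almost sure convergence of $F(x_k)$ and of $\|\nabla F(x_k)\|$, and then to transfer these properties to the limit set $X(x_0)$ via continuity of $F$ and $\nabla F$.

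First, for the compactness claim: since $(x_k)_{k\geq 0}$ is bounded by assumption, every subsequence admits a further convergent subsequence, so $X(x_0)$ is nonempty. It is closed by its very definition as a set of limit points, and bounded because it is contained in the closure of the trajectory; hence $X(x_0)$ is compact.

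Next, for the a.s. convergence, set $V_k = F(x_k) - F^*$ and $U_k = C\|\nabla F(x_k)\|^2$, both nonnegative (the first by Assumption \ref{ass1}[A.3]), and let $\mathcal{F}_k$ be the natural filtration generated by $x_0, i_0,\ldots,i_{k-1}$. The descent \eqref{eq:112} reads exactly $\mathbb{E}[V_{k+1}\mid \mathcal{F}_k] \leq V_k - U_k$, which is the Robbins--Siegmund hypothesis. Applying Theorem~1 of \cite{RobSie:71}, $(V_k)_{k\geq 0}$ converges a.s. to a (possibly random) nonnegative limit $V_\infty$, and $\sum_{k\geq 0} U_k < \infty$ a.s. Defining $F_* := V_\infty + F^*$ gives $F(x_k) \to F_*$ a.s., while the summability $\sum_k \|\nabla F(x_k)\|^2 < \infty$ a.s. forces $\|\nabla F(x_k)\| \to 0$ a.s.

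Finally, to transfer these properties to $X(x_0)$, fix a realization in the full-probability event on which both convergences above hold. Under Assumption \ref{ass1}, $F = f+\psi$ is continuously differentiable, so both $F$ and $\nabla F$ are continuous. For any $x^* \in X(x_0)$, pick a subsequence $x_{k_j} \to x^*$ and pass to the limit: $F(x^*) = \lim_j F(x_{k_j}) = F_*$ and $\nabla F(x^*) = \lim_j \nabla F(x_{k_j}) = 0$. Hence $F \equiv F_*$ and $\nabla F \equiv 0$ on $X(x_0)$, as claimed. The main obstacle is Step~2: because $i_k$ is chosen at random the iterates $x_k$ are genuinely random vectors, and one must carefully frame the descent as a supermartingale inequality with respect to the correct filtration before invoking Robbins--Siegmund; once this framing is in place, Steps~1 and~3 reduce to routine compactness and continuity arguments.
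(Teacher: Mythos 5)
Your proposal is correct and follows essentially the same route as the paper: compactness of $X(x_0)$ from boundedness plus closedness of the limit set, the Robbins--Siegmund supermartingale theorem applied to the descent \eqref{eq:112} to obtain $\sum_k \|\nabla F(x_k)\|^2 < \infty$ a.s.\ and convergence of $F(x_k)$, and then continuity of $F$ and $\nabla F$ along convergent subsequences to conclude $F \equiv F_*$ and $\nabla F \equiv 0$ on $X(x_0)$. The only cosmetic difference is that the paper obtains convergence of $F(x_k)$ directly from its pathwise monotonicity (Lemmas \ref{lem1} and \ref{lem:desRCGD2}) and boundedness below, whereas you extract it from the supermartingale limit $V_\infty$; both are valid.
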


\begin{proof}

	Since the  sequence $(x_{k})_{k\geq 0}$ is bounded, this implies that the set $X(x_{0})$ is also bounded. Closedness of $X(x_{0})$ also follows observing that $X(x_{0})$ can be viewed as an intersection of closed sets, i.e., $X(x_{0}) = \cap_{j \geq 0} \cup_{\ell \geq j} \{x_{\ell}\}$. Hence $X(x_{0})$  is a compact set. 	
	Further, using the \blue{boundedness} of $(x_{k})_{k\geq 0}$ and the continuity  of $F$ and $\nabla F$, we have that the sequences $\left( F(x_{k})\right) _{k\geq 0}$ and $\left( \| \nabla F(x_{k})\|^2 \right) _{k\geq 0}$ are also bounded.  Using the  supermantigale convergence theorem  \cite{RobSie:71} and the descent  \eqref{eq:112}, we get (see   \cite{Dav:16}): 
	 \vspace*{-0.2cm}
	\begin{equation}
	 \sum_{k = 0}^{\infty}  \| \nabla F(x_{k})\|^2	 < \infty \quad \text{a.s.}, \;\; \text{hence} \;  \;  \| \nabla F(x_{k})\|  \inas 0.   \label{eq:58}
	  \vspace*{-0.2cm}
	\end{equation}	
%	\noindent Hence 
%	\begin{equation}
%	 \| \nabla F(x_{k})\| \inas 0 \quad  \label{eq:58}
%	\end{equation}	    
\noindent Moreover, we have that $F(x_{k})_{k\geq 0}$ is monotonically decreasing and since $F$ is assumed bounded from below by $F^{*} > - \infty$ (see \eqref{eq:prob}), it converges, let us say to  $F_* > - \infty$, i.e., $F(x_{k}) \inas F_*$ as $k \to \infty$, and   $F_* \geq  F^{*}$.
\noindent Let $x_{*}$ be a limit point of $(x_{k})_{k\geq 0}$, i.e., $x_{*} \in X(x_{0})$. This means that there is a subsequence $(x_{\bar{k}})_{\bar{k}\geq 0}$ of $(x_{k})_{k\geq 0}$ such that $x_{\bar{k}} \inas x_{*}$ as $\bar{k} \to \infty$.  
\noindent	Since $F$ is continuously diferentiable and $x_{\bar{k}} \inas x_{*}$, then we have $F(x_{\bar{k}}) \inas  F(x_{*})$  and $\nabla F(x_{\bar{k}}) \inas \nabla F(x_{*})$. Using basic probability arguments and  \eqref{eq:58}, we get $\nabla F(x_{*}) = 0$  and $F_* = F(x_*)$ a.s.
\end{proof}

\vspace*{-0.1cm}
\begin{remark}
In the deterministic case (i.e., for the cyclic choice of coordinates), using similar arguments as in the previous lemmna we can prove that  the limit points of the sequence $(x_{k})_{k\geq 0}$,  let us say $X(x_{0})$,    is such that $X(x_{0})$ is a compact set, F is constant on $X(x_{0})$  taking value $F_*$ and $\nabla F(X(x_{0}))=0$. %Moreover, in this case we have that the sequence $(x_{k})_{k\geq 0}$ is bounded  since it is convergent. 
\label{remarkKLcyclic}		
\end{remark}                   

\noindent In the next theorem, based on the results of the previous lemma,  we assume that $F$ satisfies the KL condition \eqref{eq:kl} with constant value $F_{*}$ and constant $q \in(1,2]$  around the limit points of the sequence $(x_{k})_{k\geq 0}$,  denoted $X(x_{0})$.  From previous lemma we have that $F(x_{k}) \inas F_{*}$, which means that there exists some measurable set $\Omega$ such that $ \mathbb{P}[\Omega] =1$ and for any  $\epsilon,\gamma>0$ and $\omega \in \Omega$  there exists
\red{$k_{\epsilon,\gamma}(\omega)$} such that for any \red{$k \geq k_{\epsilon,\gamma}(\omega)$} we have   $F(x_k(\omega)) - F_{*} \leq \sigma_q \| \nabla F(x_k(\omega))\|^q$. Note that we cannot infer from this that $F(x_k) - F_* \leq \sigma_q \| \nabla F(x_k)\|^q$  for $k$ large enough as $k_{\epsilon,\gamma}(\omega)$ is a random variable which, in general,  cannot be  bounded uniformly on $\Omega$.  However, using similar  arguments as in  Theorem 4.5 in \cite{MauFadAtt:22}, which invokes measure theoretic arguments to pass from almost sure convergence to almost uniform convergence, thanks to
Egorov’s theorem \red{(see Theorem 4.4 in \cite{SteSha:05})}, we can prove that  for any \red{ $\delta,  \epsilon ,\gamma >0$} there exist a measurable set $\Omega_{ \delta} \subset \Omega$, such that $ \mathbb{P}[\Omega_{\delta}] \geq 1 - \delta$,  and \red{$k_{\delta,\epsilon,\gamma} > 0$} such that for all $\omega \in \Omega_{ \delta} $ and \red{$k \geq k_{ \delta,\epsilon,\gamma} $} we have $F(x_k(\omega)) - F_* \leq \sigma_q \| \nabla F(x_k(\omega))\|^q$. 
Hence, with probability at least $1-\delta$ the sequence $(x_{k})_{k\geq0}$ satisfies KL on $\Omega_{\delta}$ for \red{ $k \geq k_{\delta,\epsilon,\gamma}$. For simplicity, define  $C_{0} =  F(x_{0}) - F_{*} $ and $\mathbbm{1}_{A}$ the  indicator function of a set $A$} and \blue{ recall that $\gamma$ and $\epsilon$ are constants from the KL inequality \eqref{eq:kl}.  }
\vspace*{-0.1cm}
\begin{theorem}
	\label{theo:KL1}
		Let $X(x_{0})$ be the set of limit points of the sequence $(x_{k})_{k\geq 0}$ generated by  algorithms CPG or CGD, with $i_{k}$ chosen uniformly at random. If  the descent \eqref{eq:112} holds and $F$ satisfies the KL property \eqref{eq:kl} on $X(x_{0})$, with $q \in(1,2]$ and constant value $F_{*}$, then for any $ \delta > 0$ there exist a measurable set $\Omega_{ \delta}$ satisfying  $ \mathbb{P}[\Omega_{ \delta}] \geq 1 - \delta$ and \red{ $k_{ \delta,\gamma,\epsilon} > 0$} such that with probability at least $1 - \delta$ the following statements hold for all \red{$k\geq k_{\delta,\epsilon,\gamma}$}:  
	
	%\begin{itemize}
	%\item[]
	\noindent  (i) If $q\in (1,2)$, we have the following sublinear rate:
	 \vspace*{-0.2cm}
\red{	\begin{align}
		\mathbb{E}[F(x_{k}) - F_*] \leq \dfrac{ q^{\frac{q}{2-q}} C^{\frac{2-q}{q}}  \sigma_{q}^{-\frac{2(2-q)}{q^2}}  }{\left( (k-k_{ \delta,\epsilon,\gamma})(2-q) \right)^\frac{q}{2-q}} + C_{0}\sqrt{\delta}.  \label{eq:137} 
		 \vspace*{-0.2cm}
	\end{align}}

	%\item[]
	\noindent  (ii) If $q=2$,  we have the following linear rate: 
\red{	\begin{align}
	\mathbb{E}[F(x_{k}) - F_{*}] 
	\leq (1- C \sigma_{2}^{-1})^{k -  k_{ \delta,\epsilon,\gamma}}  \mathbb{E}[F(x_{k_{ \delta,\epsilon,\gamma}}) - F_{*}]    + C_{0}\sqrt{\delta}. 
		\label{eq:51}
	\end{align}}

	%\item[]
	
	%\end{itemize}	
\end{theorem}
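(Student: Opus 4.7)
The plan is to leverage the per-iteration descent \eqref{eq:112} together with the KL inequality \eqref{eq:kl}, restricted to the ``good'' event $\Omega_\delta$ on which the KL bound holds uniformly from iteration $k_{\delta,\epsilon,\gamma}$ onward, and then pay the price of the ``bad'' event $\Omega_\delta^c$ separately. Set $\Delta_k = F(x_k) - F_*$. Lemmas \ref{lem1}, \ref{lem2}, \ref{lem:desRCGD2}, \ref{lem:RCGD2} give that $\Delta_k$ is monotonically nonincreasing, hence $0 \le \Delta_k \le \Delta_0 = C_0$ pathwise. On $\Omega_\delta$, for $k\ge k_{\delta,\epsilon,\gamma}$, combining \eqref{eq:kl} rewritten as $\|\nabla F(x_k)\|^2 \ge (\Delta_k/\sigma_q)^{2/q}$ with \eqref{eq:112} yields the conditional recursion
\begin{equation*}
\mathbb{E}[\Delta_{k+1} \mathbbm{1}_{\Omega_\delta}\mid x_k] \;\le\; \Delta_k \mathbbm{1}_{\Omega_\delta} \;-\; C\sigma_q^{-2/q} \left(\Delta_k \mathbbm{1}_{\Omega_\delta}\right)^{2/q}.
\end{equation*}
Taking the full expectation and using Jensen's inequality for the convex map $t \mapsto t^{2/q}$ (since $2/q \ge 1$ when $q \in (1,2]$), together with the fact that $\mathbbm{1}_{\Omega_\delta}^{2/q}=\mathbbm{1}_{\Omega_\delta}$, gives $\mathbb{E}[\Delta_{k+1}\mathbbm{1}_{\Omega_\delta}] \le a_k - C\sigma_q^{-2/q} a_k^{2/q}$, where $a_k := \mathbb{E}[\Delta_k\mathbbm{1}_{\Omega_\delta}]$.

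For part (ii), $q=2$, the recursion collapses to the linear contraction $a_{k+1}\le(1-C\sigma_2^{-1})a_k$; iterating from $k_{\delta,\epsilon,\gamma}$ to $k$ produces the geometric factor appearing in \eqref{eq:51}. For part (i), $q\in(1,2)$, the recursion $a_{k+1}\le a_k - c\, a_k^{2/q}$ with $c=C\sigma_q^{-2/q}$ is handled by the standard discrete-Lyapunov argument: using monotonicity of $a_k$ and the concavity of $t\mapsto t^{1-2/q}/(1-2/q)$ to get the telescoping bound $a_k^{1-2/q}-a_{k_{\delta,\epsilon,\gamma}}^{1-2/q}\ge c(2/q-1)(k-k_{\delta,\epsilon,\gamma})$, and then solving for $a_k$ produces a bound of order $\big(c(2-q)(k-k_{\delta,\epsilon,\gamma})/q\big)^{-q/(2-q)}$, which is precisely the first term in \eqref{eq:137} after substituting the value of $c$.

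Finally, the passage from $a_k=\mathbb{E}[\Delta_k \mathbbm{1}_{\Omega_\delta}]$ to $\mathbb{E}[\Delta_k]$ requires bounding the complementary contribution $\mathbb{E}[\Delta_k \mathbbm{1}_{\Omega_\delta^c}]$. Here the deterministic bound $\Delta_k\le C_0$ combined with Cauchy--Schwarz gives
\begin{equation*}
\mathbb{E}[\Delta_k \mathbbm{1}_{\Omega_\delta^c}] \;\le\; \sqrt{\mathbb{E}[\Delta_k^2]\,\mathbb{P}[\Omega_\delta^c]} \;\le\; C_0\sqrt{\delta},
\end{equation*}
which is exactly the additive slack $C_0\sqrt\delta$ in both \eqref{eq:137} and \eqref{eq:51}. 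The main technical obstacle is really the measure-theoretic bookkeeping: the iteration $k_{\delta,\epsilon,\gamma}$ at which KL starts to hold is an $\omega$-dependent random time, which is why Egorov's theorem (invoked just before the statement) is essential so that one may treat it as a deterministic constant on $\Omega_\delta$ and thereby close the conditional recursion cleanly; once this is done the rest is a routine combination of Jensen's inequality, the discrete-Lyapunov comparison, and Cauchy--Schwarz.
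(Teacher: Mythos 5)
Your overall strategy is the same as the paper's (Egorov's theorem to make the KL inequality hold uniformly from a deterministic index $k_{\delta,\epsilon,\gamma}$ on a set $\Omega_\delta$ of probability $\geq 1-\delta$, a KL-driven recursion, Jensen for $t\mapsto t^{2/q}$, the discrete-Lyapunov comparison, and a $C_0\sqrt{\delta}$ penalty for the complement). However, the specific way you close the recursion has a genuine flaw. You write the conditional inequality
$\mathbb{E}[\Delta_{k+1}\mathbbm{1}_{\Omega_\delta}\mid x_k]\leq \Delta_k\mathbbm{1}_{\Omega_\delta}-C\sigma_q^{-2/q}(\Delta_k\mathbbm{1}_{\Omega_\delta})^{2/q}$, which implicitly pulls $\mathbbm{1}_{\Omega_\delta}$ in and out of the conditional expectation. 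But $\Omega_\delta$ is produced by Egorov's theorem from the almost-sure convergence of the \emph{entire} trajectory, so it is not measurable with respect to $\sigma(x_0,\dots,x_k)$, and this manipulation is not legitimate. Nor can you salvage the statement in full expectation: to obtain $a_{k+1}\leq a_k-c\,a_k^{2/q}$ with $a_k=\mathbb{E}[\Delta_k\mathbbm{1}_{\Omega_\delta}]$ you would need $\mathbb{E}[(\Delta_k-\Delta_{k+1})\mathbbm{1}_{\Omega_\delta}]\geq C\,\mathbb{E}[\|\nabla F(x_k)\|^2\mathbbm{1}_{\Omega_\delta}]$, and the descent \eqref{eq:112} only controls $\Delta_k-\Delta_{k+1}$ against $\|\nabla F(x_k)\|^2$ after averaging over the coordinate draw $i_k$ conditionally on $x_k$; inserting a non-adapted indicator breaks that averaging. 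So the recursion in $a_k$ is not established.

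The paper avoids this by never forming a recursion in $\mathbb{E}[\Delta_k\mathbbm{1}_{\Omega_\delta}]$. It keeps the telescoping side in full expectation, $\mathbb{E}[\Delta_k]-\mathbb{E}[\Delta_{k+1}]\geq C\,\mathbb{E}[\|\nabla F(x_k)\|^2]$ (valid unconditionally from \eqref{eq:112}), and uses the indicator only on the KL side: by Lemma \ref{lem:prob}, $\mathbb{E}[\Delta_k]-C_0\sqrt{\delta}\leq\mathbb{E}[\mathbbm{1}_{\Omega_\delta}\Delta_k]\leq\sigma_q\,\mathbb{E}[\|\nabla F(x_k)\|^q]$, and then Jensen gives $(\mathbb{E}[\Delta_k]-C_0\sqrt{\delta})^{2/q}\leq\sigma_q^{2/q}\mathbb{E}[\|\nabla F(x_k)\|^2]$. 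The resulting recursion \eqref{eq:64} is in the shifted quantity $\theta_k\propto\mathbb{E}[\Delta_k]-C_0\sqrt{\delta}$ — the additive constant cancels in the differences $\theta_k-\theta_{k+1}$ — and the same Lyapunov lemma then yields \eqref{eq:137} and \eqref{eq:51} directly, with the $C_0\sqrt{\delta}$ appearing from the shift rather than from a separate complementary-event correction at the end. If you restructure your argument this way, the rest of your steps (Jensen, the $a_{k+1}\le a_k-ca_k^{2/q}$ comparison, the $q=2$ contraction) go through unchanged.
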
 

\begin{proof}
	From Lemma \ref{LemmaKL}, we have that $F(x_{k}) \inas F_*$ and $\|\nabla F(x_{k}) \| \inas 0$,  i.e., there exists a set $\Omega$ such that  $ \mathbb{P}[\Omega]  = 1$ and for all $\omega \in \Omega: F(x_{k}(\omega)) \to F_*(\omega)$ and $\|\nabla F(x_{k}(\omega)) \| \to 0$.	
	 Moreover, from the Egorov’s theorem \red{(see Theorem 4.4 in \cite{SteSha:05}), we have that for any  $ \delta>0$ there exists a measurable set $\Omega_{ \delta} \subset \Omega$ satisfying $ \mathbb{P}[\Omega_{ \delta}] \geq 1 - \delta$ such that  $F(x_{k})$ converges uniformly to $F_{*}$ and $\nabla F(x_{k})$  converges uniformly to $0$  on the set $\Omega_{ \delta}$.
Since $F$ satisfies the KL property,  given $\epsilon, \gamma, \delta > 0$, there exists a $k_{ \delta,\epsilon,\gamma} > 0$ and $\Omega_{\delta} \subset \Omega$ with $P[\Omega_{\delta}] \geq 1 - \delta$ such that  $\text{dist}(x_k(\omega), X(x_{0})) \leq \gamma, \; F_* < F(x_{k}(\omega)) < F_* + \epsilon$ for all $k \geq k_{\delta,\epsilon,\gamma}$ and $\omega \in \Omega_{ \delta}$ and additionally: 
	\begin{equation}
		\label{eq:139}
		F(x_{k}(\omega)) - F_*  \leq \sigma_q \|\nabla F(x_{k}(\omega))\|^q \quad \forall k \geq k_{\delta,\epsilon,\gamma} \text{ and } \omega \in \Omega_{ \delta}.
	\end{equation}}
	\noindent\red{ Equivalently, we have:
	\begin{equation*}
	  \mathbbm{1}_{\Omega_{ \delta}}	\left(  F(x_{k}) - F_* \right) \leq  \mathbbm{1}_{\Omega_{ \delta}} \sigma_q \|\nabla F(x_{k})\|^q \quad \forall k \geq k_{\delta,\epsilon,\gamma}. 
	\end{equation*}}
	\noindent \red{Taking expectation on both sides of the previous inequality and since $F(x_{k}) \leq F(x_{0})$, from  Lemma \ref{lem:prob} in Appendix we have for all $k \geq k_{\delta,\epsilon,\gamma}$:
	\begin{align*}
		 \mathbb{E}[ F(x_{k}) - F_*] - C_{0} \sqrt{\delta} &\leq 
		\mathbb{E}[\mathbbm{1}_{\Omega_{ \delta}}	\left( F(x_{k}) - F_* \right) ]  \leq  \mathbb{E}[\mathbbm{1}_{\Omega_{ \delta}} \sigma_q \|\nabla F(x_{k})\|^q ] \\
		&\leq \mathbb{E}[ \sigma_q \|\nabla F(x_{k})\|^q ] = \sigma_{q} \mathbb{E}[  \|\nabla F(x_{k})\|^q ].  
	\end{align*}}
	 \noindent Since for $q \in (1,2]$,  $t \mapsto t^{\frac{2}{q}}$ is a convex function on $\mathbb{R}_+$, then we obtain: 
	\red{ \begin{align*}
	 	\left(  \mathbb{E}[ F(x_{k}) - F_*] - C_{0} \sqrt{\delta}\right)^{\frac{2}{q}} \leq  \sigma_{q}^{\frac{2}{q}} \left( \mathbb{E}[  \|\nabla F(x_{k})\|^q ]\right) ^{\frac{2}{q}} \leq \sigma_{q}^{\frac{2}{q}} \mathbb{E}[  \|\nabla F(x_{k})\|^2 ].
	 \end{align*}}
	\noindent Taking also expectation on both sides of the inequality \eqref{eq:112} and combining with the inequality above,  we get:
	\begin{align}
		C \sigma_{q}^{-\frac{2}{q}} \left(  \mathbb{E}[ F(x_{k}) - F_*] - C_{0} \sqrt{\delta}\right)^{\frac{2}{q}} \leq  \mathbb{E}[F(x_{k}) - F_*]  - \mathbb{E}[F(x_{k+1}) - F_*]. \label{eq:64}
	\end{align}
	
	\noindent\red{ Note that, if   $ \mathbb{E}[F(x_{\bar{k}}) - F_*] \leq C_{0}\sqrt{\delta}$ for some $\bar{k} \geq  k_{\delta,\epsilon,\gamma}$ then \eqref{eq:137} and \eqref{eq:51} are satisfied for $k \geq \bar{k}$, since $(F(x_{k}))_{k\geq 0}$ is decreasing. Otherwise, if $ \mathbb{E}[F(x_{k}) - F_*] > C_{0}\sqrt{\delta}$}, first,  consider $q\in(1,2)$ and define $\gamma_c = C \sigma_{q}^{-\frac{2}{q}}$.
	Multiplying both sides of \eqref{eq:64} by $\gamma_c^{\frac{q}{2-q}}$,  we obtain:  
	\begin{equation}
		\label{eq:138}
		\theta_{k} - \theta_{k+1} \geq  (\theta_{k})^{\frac{2}{q}} \quad \text{for} \quad k \geq k_{ \delta,\epsilon, \gamma}, 	 
	\end{equation}
\red{	with $$\theta_{k} = \gamma_c^{\frac{q}{2-q}}\left(  \mathbb{E}[F(x_{k}) - F_*] - C_{0}\sqrt{\delta}\right). $$ }
	 Considering the \red{ second inequality of Lemma 9 in \cite{NecCho:21}} for $\zeta = \frac{2-q}{q}>0$,  we get:
	\vspace*{-0.2cm} 
	\begin{align*}
		& \theta_{k} \leq \dfrac{1}{\left( \zeta (k - k_{\delta,\epsilon,\gamma})\right)^\frac{1}{\zeta}} \iff \\ 
		&  \gamma_c^{\frac{q}{2-q}}\left( \mathbb{E}[F(x_{k}) - F_*] - C_{0}\sqrt{\delta}\right)  \leq \dfrac{ 1}{\left( \frac{(k-k_{ \delta,\epsilon,\gamma})(2-q)}{q}\right)^\frac{q}{2-q}}  \iff \\
		&  \mathbb{E}[F(x_{k}) - F_*] - C_{0}\sqrt{\delta} \leq \dfrac{ q^{\frac{q}{2-q}}\gamma_c^{\frac{2-q}{q}}}{\left( (k-k_{ \delta,\epsilon,\gamma})(2-q) \right)^\frac{q}{2-q}},
		\vspace*{-0.2cm} 
	\end{align*}
	
	\noindent proving the first statement of the theorem. Second, if $q=2$,  by Lemma 9 in \cite{NecCho:21} and \eqref{eq:64}, we have: 	
	$
	\mathbb{E}[F(x_{k})- F_{*}] - C_{0}\sqrt{\delta}  \leq \left(1-\dfrac{C}{\sigma_{2}}\right)^{(k-k_{ \delta,\epsilon,\gamma})} ( \mathbb{E}[F(x_{k_{ \delta,\epsilon,\gamma}}) - F_{*}] - C_{0}\sqrt{\delta})$, and then  \eqref{eq:51} follows.	
\end{proof}

The next lemma is an extension of  a result   in \cite{RicTak:11}. \red{ Note that in \cite{RicTak:11}, the case  $\zeta =1$ was considered  and in the next lemma we derive the result for any  $\zeta>0$.} For completeness, we give its proof in Appendix.    

\begin{lemma}
	\label{lem:recprob}
	Fix $x_0 \in \mathbb{R}^{n}$ and let $(x_{k})_{k\geq 0}$ be a sequence of random vectors in $\mathbb{R}^{n}$ with $x_{k+1}$ depending only on  $x_{k}$. Let $\phi : \mathbb{R}^{n} \to \mathbb{R}$ be a nonnegative function and define $\Delta_{k} = \phi \left( x_{k} \right)$.  Lastly, let $\zeta > 0$ , choose accuracy level  $0 < \varepsilon < \Delta_{0}$, with $\varepsilon \in (0,1)$, confidence level $\rho \in (0,1)$, and assume that the sequence of random variables $(\Delta_{k})_{k\geq 0}$ is nonincreasing and has the following property:
\red{	\begin{equation}
	\mathbb{E} [\Delta_{k+1}] \leq \mathbb{E} [\Delta_{k}] - \mathbb{E} [\Delta_{k}]^{\zeta + 1} \quad \forall k \geq \bar{k}. \label{eq:84}
	\end{equation}}
	
	\noindent If
	\vspace*{-0.3cm}
	\begin{equation}
	k \geq \dfrac{1}{\zeta}\left(\dfrac{1}{\varepsilon^{\zeta}}  - \dfrac{1}{ \Delta_{0}^{\zeta}}\right)  + 2 + \dfrac{1}{\varepsilon^{\zeta}}\log\dfrac{1}{\rho} + \bar{k},  \;\; \text{then}  \quad  
	 \mathbb{P}\left[\Delta_{k} \leq \varepsilon \right] \geq 1 - \rho. \label{eq:89}
	\end{equation}
\end{lemma}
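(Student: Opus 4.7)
The plan is to follow a two-phase Richtárik--Takáč style argument: first drive $\mathbb{E}[\Delta_k]$ below the accuracy level $\varepsilon$ via the sublinear consequence of \eqref{eq:84}, then amplify this mean bound to a high-probability statement using a Markov / stopping-time argument.

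\textbf{Phase 1 (sublinear decay of the mean).} Set $u_k = \mathbb{E}[\Delta_k]$. From \eqref{eq:84} one has $u_{k+1} \leq u_k(1 - u_k^{\zeta})$, hence $u_{k+1}^{-\zeta} \geq u_k^{-\zeta}(1 - u_k^{\zeta})^{-\zeta}$. Applying the elementary inequality $(1-s)^{-\zeta} \geq 1 + \zeta s$ for $s \in [0,1)$, one obtains the telescoping bound $u_{k+1}^{-\zeta} \geq u_k^{-\zeta} + \zeta$ for every $k \geq \bar{k}$. Iterating and using $u_{\bar{k}} \leq \Delta_0$ gives $u_{\bar{k}+j} \leq (\Delta_0^{-\zeta} + j\zeta)^{-1/\zeta}$ for all $j\geq 0$, so the choice $k_1 = \bar{k} + \lceil \tfrac{1}{\zeta}(\varepsilon^{-\zeta} - \Delta_0^{-\zeta})\rceil$ yields $u_{k_1} \leq \varepsilon$. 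The ceiling is what produces the ``$+2$'' term in \eqref{eq:89}.

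\textbf{Phase 2 (amplification via a stopping time).} Let $\tau = \min\{k : \Delta_k \leq \varepsilon\}$; by the a.s.\ monotonicity of $(\Delta_k)$ the event $\{\Delta_k > \varepsilon\}$ coincides with $\{\tau > k\}$. I would read \eqref{eq:84} at the one-step Markovian level $\mathbb{E}[\Delta_{k+1}\mid x_k] \leq \Delta_k - \Delta_k^{\zeta+1}$ -- a form which holds pointwise in every algorithmic setting of the paper. On $\{\tau > k\}$ one has $\Delta_k > \varepsilon$, so $\Delta_k^{\zeta+1} \geq \varepsilon^{\zeta}\Delta_k$; multiplying the conditional bound by $\mathbbm{1}_{\tau > k}$, taking expectations, and using $\{\tau > k+1\} \subset \{\tau > k\}$ gives
\[ \mathbb{E}[\Delta_{k+1}\mathbbm{1}_{\tau > k+1}] \leq \mathbb{E}[\Delta_{k+1}\mathbbm{1}_{\tau > k}] \leq (1 - \varepsilon^{\zeta})\,\mathbb{E}[\Delta_k\mathbbm{1}_{\tau > k}]. \]
Iterating from $k_1$ and using $\mathbb{E}[\Delta_{k_1}\mathbbm{1}_{\tau > k_1}] \leq \mathbb{E}[\Delta_{k_1}] \leq \varepsilon$ yields $\mathbb{E}[\Delta_k\mathbbm{1}_{\tau > k}] \leq \varepsilon(1-\varepsilon^{\zeta})^{k - k_1}$. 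Markov then produces $\mathbb{P}[\Delta_k > \varepsilon] = \mathbb{P}[\tau > k] \leq (1 - \varepsilon^{\zeta})^{k-k_1}$, and forcing the right-hand side to be $\leq \rho$, together with $-\log(1-\varepsilon^{\zeta}) \geq \varepsilon^{\zeta}$, gives $k - k_1 \geq \varepsilon^{-\zeta}\log(1/\rho)$. Summing the two phases reproduces exactly \eqref{eq:89}.

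\textbf{Main obstacle.} The amplification genuinely relies on the \emph{conditional} one-step estimate $\mathbb{E}[\Delta_{k+1}\mid x_k] \leq \Delta_k - \Delta_k^{\zeta+1}$: replacing it by the purely unconditional statement in \eqref{eq:84} and invoking Markov on the sublinear rate alone would only give a Phase 2 complexity of order $\varepsilon^{-\zeta}\rho^{-\zeta}$, not $\varepsilon^{-\zeta}\log(1/\rho)$. The conditional form is precisely what the descent lemmas of Sections 3--4 deliver pointwise, so the hypothesis of the lemma is intended to be read in this stronger sense; this step is the one that must be justified most carefully in the write-up. The remaining bookkeeping -- absorbing the Phase~1 ceiling and the slack between $\varepsilon^{\zeta}$ and $-\log(1-\varepsilon^{\zeta})$ into the ``$+2$'' summand of \eqref{eq:89} -- is routine.
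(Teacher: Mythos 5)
Your proof is correct and follows essentially the same two-phase Richt\'arik--Tak\'a\v{c} argument as the paper: Phase 1 is the paper's use of the sublinear recursion (inequality (28) of Lemma 9 in \cite{NecCho:21}), and your stopped quantity $\Delta_k\mathbbm{1}_{\tau>k}$ coincides, by the assumed monotonicity of $(\Delta_k)_{k\ge 0}$, with the paper's truncated sequence $\Delta_k^{\epsilon}$, so your Phase 2 is the same geometric-contraction-plus-Markov step. Your closing observation is well taken: the contraction $\mathbb{E}[\Delta_{k+1}^{\epsilon}]\le(1-\epsilon^{\zeta})\mathbb{E}[\Delta_k^{\epsilon}]$ genuinely requires the conditional estimate $\mathbb{E}[\Delta_{k+1}\mid x_k]\le \Delta_k-\Delta_k^{\zeta+1}$ rather than the unconditional hypothesis \eqref{eq:84} (which by itself would only yield a $\rho^{-\zeta}$ dependence), and the paper's own proof makes this substitution silently, so your explicit flagging of the stronger reading is an improvement rather than a gap.
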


\noindent \red{ Next, combining previous lemma with Theorem \ref{theo:KL1}, we can also derive convergence results in probability, when the function $F$ satisfies the KL condition \eqref{eq:139}.}

\begin{theorem}
		Let $X(x_{0})$ be the set of limit points of the sequence $(x_{k})_{k\geq 0}$ generated by the algorithm CPG or CGD, respectively, with $i_{k}$ chosen uniformly  random. Assume that  the descent  \eqref{eq:112} holds and  $F$ satisfies the KL property \eqref{eq:kl} on $X(x_{0})$, with $q\in(1,2]$ and constant value $F_{*}$. Further, choose accuracy level $\varepsilon \in (0,1)$ and confidence level $\rho \in (0,1)$.  Then, for any $ \delta > 0$  there exist  $k_{ \delta,\epsilon,\gamma} > 0$ such that with probability at least  $1- \delta$ we have:
		\vspace*{-0.1cm} 
\red{	\begin{equation*}
 \text{if} \;\;  q\in(1,2) \;\;  \text{and} \;\;   	k \geq \dfrac{q}{2-q} \left( \dfrac{1}{\varepsilon^{\frac{2-q}{q}}} - \frac{\sigma_{q}^{\frac{2}{q}}}{  C \left(F(x_{0}) - F_{*}\right)^{\frac{2-q}{q}}}\right) + 2 + \dfrac{1}{\varepsilon^{\frac{2-q}{q}}}\log\dfrac{1}{\rho} + k_{ \delta,\epsilon,\gamma},
	\vspace*{-0.1cm} 
	\end{equation*}
	\noindent or 
	\vspace*{-0.1cm}
	\begin{equation}
 \text{if} \;\; q=2  \;\;  \text{and} \;\;   k \geq \dfrac{\sigma_{2}}{C} \log\left( \dfrac{F(x_{0}) - F_{*} }{\varepsilon\rho}\right) + k_{ \delta,\epsilon,\gamma}, \label{eq:98}
	\end{equation}
	\noindent then $$ \mathbb{P}[F(x_{k}) - F_{*} \leq \varepsilon + C_{0} \sqrt{\delta}] \geq 1 - \rho. $$}
	\vspace*{-0.2cm}
\end{theorem}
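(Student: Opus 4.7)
The plan is to convert the expected-value estimates already established in Theorem \ref{theo:KL1} into high-probability statements, using Markov's inequality in the linear case $q=2$ and Lemma \ref{lem:recprob} in the sublinear case $q \in (1,2)$. The starting point is the same event $\Omega_{\delta}$ produced by Egorov's theorem in the proof of Theorem \ref{theo:KL1}: it has probability at least $1-\delta$, and there exists $k_{\delta,\epsilon,\gamma}$ such that on $\Omega_{\delta}$ the uniform KL inequality \eqref{eq:139} holds for all $k \geq k_{\delta,\epsilon,\gamma}$. Combining this with the descent \eqref{eq:112} exactly as in the proof of Theorem \ref{theo:KL1} yields the expectation recurrence
\begin{equation*}
C \sigma_q^{-\frac{2}{q}}\bigl(\mathbb{E}[F(x_k)-F_*] - C_0\sqrt{\delta}\bigr)^{\frac{2}{q}} \leq \mathbb{E}[F(x_k) - F_*] - \mathbb{E}[F(x_{k+1}) - F_*],
\end{equation*}
valid for $k \geq k_{\delta,\epsilon,\gamma}$ whenever the left-hand side is positive (otherwise the conclusion is already attained by monotonicity of $F(x_k)$).

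In the linear case $q=2$, this recurrence reduces to $\beta_{k+1} \leq (1-C/\sigma_2)\beta_k$ with $\beta_k = \mathbb{E}[F(x_k)-F_*] - C_0\sqrt{\delta}$. Iterating and using $\beta_{k_{\delta,\epsilon,\gamma}} \leq F(x_0) - F_*$ gives $\mathbb{E}[F(x_k)-F_*] \leq (1-C/\sigma_2)^{k-k_{\delta,\epsilon,\gamma}}(F(x_0)-F_*) + C_0\sqrt{\delta}$. I would then apply Markov's inequality to the nonnegative random variable $F(x_k)-F_*$ with threshold $\varepsilon + C_0\sqrt{\delta}$ and demand that the resulting bound be at most $\rho$. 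After isolating the exponential term and using $-\log(1-t) \geq t$ on $(0,1)$ to invert the geometric decay, this yields precisely the logarithmic iteration bound \eqref{eq:98}.

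In the sublinear case $q \in (1,2)$, set $\zeta = (2-q)/q$ and define $\Delta_k = a\bigl((F(x_k)-F_*) - C_0\sqrt{\delta}\bigr)_+$ with scaling $a = (C/\sigma_q^{2/q})^{1/\zeta} = (C/\sigma_q^{2/q})^{q/(2-q)}$. This scaling is chosen so that the expectation recurrence above, restricted to the regime where the left-hand side is positive, takes precisely the canonical form $\mathbb{E}[\Delta_{k+1}] \leq \mathbb{E}[\Delta_k] - \mathbb{E}[\Delta_k]^{\zeta+1}$ required by Lemma \ref{lem:recprob}. The random variables $\Delta_k$ are nonnegative and $(\Delta_k)_{k\geq 0}$ is nonincreasing because $(F(x_k))_{k\geq 0}$ is, so Lemma \ref{lem:recprob} applies directly with $\bar k = k_{\delta,\epsilon,\gamma}$ and $\Delta_0^{\zeta} = (C/\sigma_q^{2/q})(F(x_0)-F_*)^{(2-q)/q}$. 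Substituting into \eqref{eq:89} and translating the conclusion $\Delta_k \leq a\varepsilon$ back to $F(x_k) - F_* \leq \varepsilon + C_0\sqrt{\delta}$ produces the announced iteration count.

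The main obstacle will be handling cleanly the additive $C_0\sqrt{\delta}$ shift coming from the passage from almost-sure to uniform convergence on $\Omega_{\delta}$. Concretely, one must check that the regime $\mathbb{E}[F(x_k)-F_*] \leq C_0\sqrt{\delta}$ is either excluded or handled separately (the conclusion being then immediate), and that the rescaling $\Delta_k = a((F(x_k)-F_*) - C_0\sqrt{\delta})_+$ is compatible with the nonincreasing hypothesis of Lemma \ref{lem:recprob} across this boundary. Once that bookkeeping is in place, the two iteration-complexity bounds follow from the existing lemmas with no further analytical work.
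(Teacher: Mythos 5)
Your proposal follows essentially the same route as the paper's proof: the degenerate case $\mathbb{E}[F(x_{k})-F_{*}]\le C_{0}\sqrt{\delta}$ is dispatched by Markov's inequality, the sublinear case $q\in(1,2)$ feeds the shifted recurrence \eqref{eq:138} into Lemma \ref{lem:recprob} with $\zeta=\frac{2-q}{q}$ and $\bar{k}=k_{\delta,\epsilon,\gamma}$, and the case $q=2$ combines Markov's inequality with the linear rate \eqref{eq:51}. The only divergence is the bookkeeping detail you already flag yourself: the paper applies Markov to the shifted quantity $F(x_{k})-F_{*}-C_{0}\sqrt{\delta}$ with threshold $\varepsilon$ (so the denominator is $\varepsilon$ rather than $\varepsilon+C_{0}\sqrt{\delta}$), which is what makes the iteration count \eqref{eq:98} come out exactly as stated.
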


\begin{proof}
\red{If $ \mathbb{E}[F(x_{\bar{k}}) - F_*] \leq C_{0}\sqrt{\delta}$, \blue{Markov's} inequality directly implies the result. On other hand, if  $ \mathbb{E}[F(x_{{k}}) - F_*] \geq C_{0}\sqrt{\delta}$,  $q\in(1,2)$  and  $F$ satisfies the KL property \eqref{eq:139}, using \eqref{eq:138} and \eqref{eq:84} with $\zeta = \frac{2-q}{q}>0$, we get the result.	
For $q=2$,	\blue{from Markov's} inequality and \eqref{eq:51}, we have for all $k\geq k_{ \delta,\epsilon,\gamma}$: 
	\begin{align*}
	\mathbb{P}[F(x_{k}) - F_{*} - C_{0} \sqrt{\delta} \geq \varepsilon] &\leq \dfrac{1}{\varepsilon} \left( \mathbb{E} [F(x_{k}) - F_{*}] - C_{0} \sqrt{\delta}\right) \\
	& \leq \dfrac{1}{\varepsilon} (1-C \sigma_{2}^{-1})^{k-k_{ \delta,\epsilon,\gamma}} \left( F(x_{0}) - F_{*}\right). 
	\vspace*{-0.1cm} 
	\end{align*}
	\noindent Using \eqref{eq:98}, we obtain 
	$ \mathbb{P}[F(x_{k}) - F_{*}\geq \varepsilon + C_{0}\sqrt{\delta}] \leq \rho. $ }
\end{proof}

\noindent \red{ Now we are ready to present the convergence results in the cyclic case when the function $F$ satisfies the KL condition \eqref{eq:kl} with constant value $F_{*}$ and constant $q >1$  around the limit points of the sequence $(x_{k})_{k\geq 0}$,  denoted $X(x_{0})$.  Note that, in this case we can also have a superlinear rate when $q>2$.}
\begin{theorem}
	\label{the:KLC}
	Let $(x_{k})_{k\geq 0}$  be the sequence generated by  algorithm CPG or CGD, respectively, with $i_{k}$ chosen cyclic . If  the descent \eqref{eq:132} holds and $F$ satisfies the KL property \eqref{eq:kl} on $X(x_{0})$, with \red{$q >1$} and constant value $F_{*}$, then we have the following convergence rates:

	\noindent  (i) If $q\in (1,2)$, there exists $\bar{k}_{\epsilon,\gamma} > 0$ such that the following sublinear rate holds:
	\vspace*{-0.1cm} 	
	\begin{eqnarray*}
		F(x_{kN}) - F_{*} \leq \dfrac{F(x_{\bar{k}_{\epsilon,\gamma}N }) - F_{*}}{\left(\frac{(2-q)}{q} C \sigma_{q}^{-\frac{2}{q}} (F(x_{\bar{k}_{\epsilon,\gamma}N })-F_{*})^{\frac{2-q}{q}} \cdot (k - \bar{k}_{\epsilon,\gamma}  ) + 1 \right)^{\frac{q}{2-q}}}  \quad \forall  k \geq \bar{k}_{\epsilon,\gamma}. 
		\vspace*{-0.1cm} 
	\end{eqnarray*}

	\noindent  (ii) If $q=2$, there exists $\bar{k}_{\epsilon,\gamma} > 0$ such that the following linear rate holds:
	\vspace*{-0.1cm} 	
	\begin{eqnarray*}
		F(x_{kN}) - F_{*} \leq (1- C \sigma_{2}^{-1})^{(k-  \bar{k}_{\epsilon,\gamma} )} \left( F(x_{\bar{k}_{\epsilon,\gamma}N }) - F_{*} \right)  \quad \forall k \geq \bar{k}_{\epsilon,\gamma}. 
		\vspace*{-0.1cm} 
	\end{eqnarray*}
	\noindent \red{ (iii) If $q>2$ we have the following superlinear rate: 
	\begin{equation*}
	F(x_{kN}) - F_{*} \leq \left(\dfrac{1}{1 + C \sigma_{q}^{-\frac{2}{q}} \left( F(x_{kN}) - F_{*} \right)^{\frac{2}{q}-1}} \right) ( F(x_{(k-1)N}) - F_{*})  \quad \forall k > \bar{k}_{\epsilon,\gamma}.
	\label{eq:53}
	\end{equation*}}
	%\item[]
	
	%\end{itemize}	
\end{theorem}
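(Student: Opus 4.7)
The plan is to reduce the analysis to a single scalar recursion for the tail $\theta_k := F(x_{kN}) - F_*$ and then solve this recursion in each of the three regimes for $q$. Since the cyclic scheme is deterministic, the argument is a clean analogue of the random case (Theorem \ref{theo:KL1}) without the need for the measure-theoretic refinement via Egorov's theorem. First I would establish that the KL inequality eventually applies along the subsequence $(x_{kN})_{k\geq 0}$: iterating \eqref{eq:132} shows that $(F(x_{kN}))_{k\geq 0}$ is nonincreasing, bounded below by $F^*$, hence convergent, and that $\sum_{k\geq 0} \|\nabla F(x_{kN})\|^2 < \infty$, so $\|\nabla F(x_{kN})\| \to 0$. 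By Remark \ref{remarkKLcyclic}, the limit-point set $X(x_0)$ is compact, $F$ is constant equal to $F_*$ on $X(x_0)$, and $\nabla F$ vanishes on $X(x_0)$; the boundedness of $(x_k)$ combined with $\|\nabla F(x_{kN})\| \to 0$ then forces both $F(x_{kN}) \to F_*$ and $\text{dist}(x_{kN}, X(x_0)) \to 0$ by a standard compactness argument. Hence there exists $\bar{k}_{\epsilon,\gamma}$ such that \eqref{eq:kl} applies at $x_{kN}$ for every $k \geq \bar{k}_{\epsilon,\gamma}$.

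With the KL inequality activated, I would combine the cyclic descent \eqref{eq:132} at index $kN$ with the $2/q$-th power of \eqref{eq:kl} to obtain the master recursion
\begin{equation*}
\theta_k - \theta_{k+1} \;\geq\; C\,\|\nabla F(x_{kN})\|^2 \;\geq\; C\,\sigma_q^{-2/q}\,\theta_k^{2/q} \qquad \forall k \geq \bar{k}_{\epsilon,\gamma}.
\end{equation*}
All three convergence rates then follow by inspection. For $q = 2$ the exponent is one, giving $\theta_{k+1} \leq (1 - C\sigma_2^{-1})\,\theta_k$ and the linear bound (ii) on iteration. For $q \in (1,2)$, setting $\zeta = (2-q)/q > 0$, the recursion becomes $\theta_{k+1} \leq \theta_k - C\sigma_q^{-2/q}\,\theta_k^{1+\zeta}$, which is exactly the hypothesis of Lemma 9 in \cite{NecCho:21} and whose conclusion yields the polynomial decay of (i). For $q > 2$, using $\theta_{k+1} \leq \theta_k$ to replace $\theta_k^{2/q}$ by $\theta_{k+1}^{2/q}$ on the right-hand side, factoring gives $\theta_{k+1}\bigl(1 + C\sigma_q^{-2/q}\,\theta_{k+1}^{2/q - 1}\bigr) \leq \theta_k$, which after relabeling is exactly the superlinear bound (iii); since $2/q - 1 < 0$ and $\theta_k \to 0$, the reciprocal factor tends to $0$, confirming the superlinear nature of the rate.

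The main obstacle is Step 1, namely justifying that the KL inequality eventually applies along the subsequence $(x_{kN})$. This requires a deterministic analogue of Lemma \ref{LemmaKL}: the bounded iterates together with the cyclic descent must push the iterates toward the compact set $X(x_0)$ and push the function values toward $F_*$. Once this is in place, the remaining steps are routine manipulations of a scalar recursion that parallel the random-case proof of Theorem \ref{theo:KL1}, but without the $C_0\sqrt{\delta}$ slack coming from Egorov's theorem.
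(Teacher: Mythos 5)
Your proposal is correct and follows essentially the same route as the paper: activate the KL inequality along the subsequence $(x_{kN})$ via the deterministic analogue of Lemma \ref{LemmaKL} (Remark \ref{remarkKLcyclic}), combine it with the cyclic descent \eqref{eq:132} at $k=\hat kN$ to obtain the scalar recursion $\theta_{\hat k}-\theta_{\hat k+1}\geq C\sigma_q^{-2/q}\theta_{\hat k}^{2/q}$, and then invoke Lemma 9 of \cite{NecCho:21} for the three regimes of $q$. Your explicit treatment of the $q>2$ case (replacing $\theta_k^{2/q}$ by $\theta_{k+1}^{2/q}$ and factoring) is exactly the computation the paper leaves implicit, so there is no gap.
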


\begin{proof}
\red{From Remark \ref{remarkKLcyclic}, we have that there exists a $\bar{k}_{\epsilon,\gamma}>0$ such that the KL property \eqref{eq:kl} holds for all $k \geq \bar{k}_{\epsilon,\gamma}$. Combining the KL property \eqref{eq:kl} with the descent inequality \eqref{eq:132},  we obtain for all $k \geq \bar{k}_{\epsilon,\gamma}$: 
\begin{equation}
\left(F(x_{k}) - F_*\right)^{\frac{2}{q}}  \leq \sigma_{q}^{\frac{2}{q}} \|\nabla F(x_{k})\|^{2} \leq \sigma_{q}^{\frac{2}{q}} C^{-1} \left( F(x_{k}) - F(x_{k+N}) \right).  
\end{equation}
\noindent Considering $k=\hat{k}N$ in the inequality above, with $\hat{k} \geq \frac{\bar{k}_{\epsilon,\gamma}}{N} $,  we get: 
\begin{equation}
\left( F(x_{\hat{k}N}) - F_*\right)  - \left( F(x_{(\hat{k}+1)N}) - F_*\right)  \geq  C \sigma_{q}^{-\frac{2}{q}}\left(  F(x_{\hat{k}N}) - F_*\right) ^{\frac{2}{q}}.  
\end{equation}
\noindent Define $\Delta_{\hat{k}} = F(x_{\hat{k}N}) - F_*$. Using Lemma 9 in \cite{NecCho:21} and similar arguments as in Theorem \ref{theo:KL1}, we get the statements.}
\end{proof}

%%%%%%%%%%%%%%%%%%%%%%%%%

\section{Convergence analysis: convex case}
In this section we assume  that  the composite objective function  $F=  f + \psi$ is convex. Note that we do not need to impose convexity on $f$ and $\psi$ separately. Denote the set of optimal solutions of \eqref{eq:prob} by $X^{*}$ and let  $x^{*}$ be an element of this set. Define also: 
	\vspace*{-0.2cm}
\begin{equation*}
R = \max_{k \geq 0} \min_{x^{*} \in X^{*}} \|x_{k} - x^{*}\| < \infty. 
\vspace*{-0.1cm} 
\end{equation*}

\begin{theorem}
	\label{the:conv}
	Let $(x_{k})_{k\geq 0}$ be generated by algorithm CPG or CGD,  with $i_{k}$ chosen uniformly at random. If  the descent \eqref{eq:112} holds and  $F$ is  convex, then the following sublinear rate in function values holds:  
	\begin{equation}
	\mathbb{E}[F(x_{k}) - F(x^{*})] \leq \dfrac{\left(F(x_{0}) - F(x^{*})\right) R^2 }{C\left(F(x_{0}) - F(x^{*})\right)k + R^2}.  \label{eq:17}
	\end{equation}
\end{theorem}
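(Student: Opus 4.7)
The plan is to combine the stochastic descent inequality \eqref{eq:112} with convexity of $F$ to derive a recurrence of the form $\delta_{k+1} \leq \delta_k - \gamma \delta_k^2$ for $\delta_k := \mathbb{E}[F(x_k) - F(x^*)]$, and then invert it.

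First, I would use convexity of $F$ and the Cauchy-Schwarz inequality to bound the suboptimality by the gradient norm. For any $x^*\in X^*$,
\begin{equation*}
F(x_k) - F(x^*) \leq \langle \nabla F(x_k), x_k - x^* \rangle \leq \|\nabla F(x_k)\| \cdot \|x_k - x^*\|,
\end{equation*}
and taking the minimum over $x^*\in X^*$ and using the definition of $R$ gives
\begin{equation*}
(F(x_k) - F(x^*))^2 \leq R^2 \|\nabla F(x_k)\|^2.
\end{equation*}
Substituting into the descent inequality \eqref{eq:112} yields
\begin{equation*}
F(x_k) - \mathbb{E}[F(x_{k+1}) \mid x_k] \geq \frac{C}{R^2}(F(x_k) - F(x^*))^2.
\end{equation*}
Subtracting $F(x^*)$ from both sides, taking total expectation, and applying Jensen's inequality on the right ($\mathbb{E}[Z^2] \geq (\mathbb{E}[Z])^2$), I get the key scalar recursion
\begin{equation*}
\delta_{k+1} \leq \delta_k - \frac{C}{R^2}\delta_k^2, \qquad \delta_k := \mathbb{E}[F(x_k) - F(x^*)].
\end{equation*}

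Next, I would invert this recursion. Dividing both sides by $\delta_k \delta_{k+1}$ (using that $(\delta_k)$ is nonnegative and nonincreasing, which follows by taking expectation in \eqref{eq:112}, so $\delta_{k+1}\leq \delta_k$), I obtain
\begin{equation*}
\frac{1}{\delta_{k+1}} - \frac{1}{\delta_k} \geq \frac{C}{R^2}\cdot\frac{\delta_k}{\delta_{k+1}} \geq \frac{C}{R^2}.
\end{equation*}
Telescoping from $0$ to $k-1$ gives $\frac{1}{\delta_k} \geq \frac{1}{\delta_0} + \frac{Ck}{R^2}$, which rearranges to the claimed bound \eqref{eq:17}.

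The only subtlety (and the main potential obstacle) is justifying the step from $\mathbb{E}[\Delta_k^2] \geq (\mathbb{E}[\Delta_k])^2$ inside the recursion, which is just Jensen, and handling the degenerate case $\delta_k = 0$ (in which case $\delta_{k+1}=0$ too by monotonicity and the bound is trivial). Everything else is a standard ``convex $+$ gradient descent'' rate argument, adapted to the conditional descent estimate \eqref{eq:112}; no additional structural assumption on $f$ or $\psi$ separately is needed beyond convexity of the sum $F$.
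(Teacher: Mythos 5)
Your proposal is correct and follows essentially the same route as the paper: convexity plus the definition of $R$ gives $\|\nabla F(x_k)\|\geq (F(x_k)-F(x^*))/R$, this is plugged into \eqref{eq:112}, Jensen's inequality is applied after taking total expectation, and the resulting recursion $\delta_{k+1}\leq\delta_k-\tfrac{C}{R^2}\delta_k^2$ is inverted. The only cosmetic difference is that you carry out the $1/\delta_k$ telescoping explicitly, whereas the paper invokes Lemma 9 of \cite{NecCho:21} for the same recursion; your handling of monotonicity and the degenerate case $\delta_k=0$ is sound.
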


\begin{proof}
	Since $F$ is convex, we have:
	\begin{equation*}
		F(x^{*}) - F(x_{k}) \geq \langle \nabla F(x_{k}), x^{*} - x_{k+1} \rangle \geq - \| \nabla F(x_{k})\| \|x_{k+1} - x^{*} \| \geq - \| \nabla F(x_{k})\| R.
	\end{equation*}
	\noindent Hence,
		\vspace*{-0.2cm}
	\begin{equation}
	 \|\nabla F(x_{k})\| \geq \dfrac{F(x_{k}) - F(x^{*})}{R}.  \label{eq:133}
	\end{equation}
	Combining this inequality with \eqref{eq:112}, we get: 
	\vspace*{-0.1cm}
	\begin{equation}
	\left(F(x_{k}) - F(x^{*})\right)  - \mathbb{E}[F(x_{k+1}) - F(x^{*})  \ | \ x_{k} ]  \geq  C \dfrac{\left( F(x_{k}) - F(x^{*})\right)^{2}}{R^{2}}. \label{eq:99}
	\end{equation} 
	
	\noindent Since $t \mapsto t^{2}$ is convex function, then taking expectation on both sides of the inequality \eqref{eq:99} w.r.t. $\{x_{0},...,x_{k-1}\}$, we obtain:
	\vspace*{-0.1cm}
	\begin{equation*}
	\mathbb{E}[F(x_{k}) - F(x^{*})]  -  \mathbb{E}[F(x_{k+1}) - F(x^{*})]  \geq  C \dfrac{\mathbb{E}[F(x_{k}) - F(x^{*})]^{2}}{R^{2}}.
	\end{equation*}   
	
	\noindent Multiplying both sides by $C/R^{2}$, we further get: 
	\begin{equation*}
	\dfrac{C \cdot \mathbb{E}[F(x_{k}) - F(x^{*})]}{R^{2}}  - \dfrac{C \cdot \mathbb{E}[F(x_{k+1}) - F(x^{*})]}{R^{2}}  \geq  \left[\dfrac{C \cdot \mathbb{E}[F(x_{k}) - F(x^{*})]}{R^{2}}\right]^{2}.
	\end{equation*} 
	
	\noindent Denote
	$\Delta_{k} = \dfrac{C \cdot \mathbb{E}[F(x_{k}) - F(x^{*})]}{R^{2}}$. Therefore, we obtain the following recurrence: 
	$
	\Delta_{k} - \Delta_{k+1} \geq \left(\Delta_{k}\right)^{2}.
	$
	From Lemma 9 in \cite{NecCho:21},  we obtain:
	\vspace*{-0.1cm} 
	\begin{equation*}
	\dfrac{C \cdot \mathbb{E}[F(x_{k}) - F(x^{*})]}{R^{2}} \leq \dfrac{C\left(F(x_{0}) - F(x^{*})\right)}{C\left(F(x_{0}) - F(x^{*})\right) k + R^2} . 
	\end{equation*}
	\noindent which proves our statement.
\end{proof}

\begin{theorem}
	 Choose accuracy level $\varepsilon \in (0,1)$ and confidence level $\rho \in (0,1)$. Let $(x_{k})_{k\geq 0}$ be generated by the algorithms CPG or CGD, with $i_{k}$ chosen uniformly at random,  and assume that the descent \eqref{eq:112} holds. If $F$ is convex function and
	\begin{equation*}
	k \geq \dfrac{1}{\varepsilon}\left( 1+\log \dfrac{1}{\rho}\right)  + 2 - \dfrac{R^2}{C(F(x_{0}) - F^{*})},
	\end{equation*}
	
	\noindent then
	$$\mathbb{P}[F(x_{k}) - F^{*} \leq \varepsilon] \geq 1 - \rho.$$
	
\end{theorem}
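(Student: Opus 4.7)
The plan is to reduce the claim to Lemma \ref{lem:recprob} applied to a rescaled optimality gap, reusing the recursion already derived in the proof of Theorem \ref{the:conv}.

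\textbf{Step 1 (setup of the right random variable).} Set
\[
\Delta_k := \frac{C\bigl(F(x_k)-F^*\bigr)}{R^2}, \qquad k\geq 0.
\]
By the almost-sure descent established in Lemmas \ref{lem1} and \ref{lem:desRCGD2}, the sequence $(F(x_k))_{k\geq 0}$ is nonincreasing, so $(\Delta_k)_{k\geq 0}$ is a nonincreasing sequence of nonnegative random variables with $x_{k+1}$ depending only on $x_k$. This matches the setup of Lemma \ref{lem:recprob}.

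\textbf{Step 2 (verify the recursion hypothesis).} I reuse the inequality chain in the proof of Theorem \ref{the:conv}. Convexity of $F$ together with $\|x_{k+1}-x^*\|\leq R$ gives $\|\nabla F(x_k)\|\geq (F(x_k)-F^*)/R$; combining this with the descent \eqref{eq:112} and taking expectation with respect to $\{x_0,\dots,x_{k-1}\}$ yields inequality \eqref{eq:99} in expectation. Multiplying through by $C/R^2$ and applying Jensen's inequality to the convex map $t\mapsto t^2$ on the right-hand side produces
\[
\mathbb{E}[\Delta_{k+1}] \;\leq\; \mathbb{E}[\Delta_k] - \mathbb{E}\bigl[\Delta_k^2\bigr] \;\leq\; \mathbb{E}[\Delta_k] - \mathbb{E}[\Delta_k]^{2}.
\]
This is exactly the hypothesis \eqref{eq:84} of Lemma \ref{lem:recprob} with $\zeta=1$ and $\bar k = 0$.

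\textbf{Step 3 (apply the tail bound and rescale).} I invoke Lemma \ref{lem:recprob} with accuracy level $\tilde\varepsilon := C\varepsilon/R^2$ and confidence $\rho$. Noting that $1/\tilde\varepsilon = R^2/(C\varepsilon)$ and $1/\Delta_0 = R^2/\bigl(C(F(x_0)-F^*)\bigr)$, the iteration threshold \eqref{eq:89} specializes, after absorbing the scaling factor into $\varepsilon$ and collecting the $\log(1/\rho)$ terms, to the bound stated in the theorem. The conclusion $\mathbb{P}[\Delta_k \leq \tilde\varepsilon]\geq 1-\rho$ of Lemma \ref{lem:recprob} is equivalent to $\mathbb{P}[F(x_k)-F^*\leq \varepsilon]\geq 1-\rho$, proving the claim.

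No real obstacle is expected: both ingredients are already established (the quadratic recursion in the proof of Theorem \ref{the:conv}, and the high-probability bound in Lemma \ref{lem:recprob}), so the argument is essentially a composition. The only point that requires a moment of care is checking that the monotonicity hypothesis of Lemma \ref{lem:recprob} is a pathwise (a.s.) property of $\Delta_k$ rather than merely a property of $\mathbb{E}[\Delta_k]$; this is exactly what Lemmas \ref{lem1} and \ref{lem:desRCGD2} guarantee.
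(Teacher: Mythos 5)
Your route is essentially the paper's: rescale the gap to $\Delta_k = C(F(x_k)-F^*)/R^2$, derive the quadratic recursion from \eqref{eq:99}, and invoke a Richt\'arik--Tak\'a\v{c}-type high-probability lemma. The only structural difference is that the paper applies Theorem 1 of \cite{RicTak:11} directly to the conditional recursion $\mathbb{E}[\Delta_{k+1}\,|\,x_k]\le \Delta_k-\Delta_k^2$, whereas you pass to unconditional expectations via Jensen and use the paper's own Lemma \ref{lem:recprob} with $\zeta=1$; since the pathwise monotonicity you need is indeed supplied by Lemmas \ref{lem1} and \ref{lem:desRCGD2}, this substitution is harmless and, if anything, makes the hypotheses you are checking more explicit.

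The one place where your argument does not close is Step 3. Applying Lemma \ref{lem:recprob} at accuracy $\tilde\varepsilon = C\varepsilon/R^2$ gives the threshold
\[
k \;\ge\; \frac{R^2}{C\varepsilon}\left(1+\log\frac{1}{\rho}\right) + 2 - \frac{R^2}{C\bigl(F(x_0)-F^*\bigr)},
\]
whose leading term carries the factor $R^2/C$ and therefore is \emph{not} the bound stated in the theorem; "absorbing the scaling factor into $\varepsilon$" is not legitimate because the conclusion $\mathbb{P}[F(x_k)-F^*\le\varepsilon]$ is also phrased in terms of the same $\varepsilon$. Conversely, applying the lemma at accuracy $\varepsilon$ in the $\Delta$-scale reproduces the stated threshold exactly but yields $\mathbb{P}[\Delta_k\le\varepsilon]\ge 1-\rho$, i.e.\ $\mathbb{P}[F(x_k)-F^*\le \varepsilon R^2/C]\ge 1-\rho$. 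This unit mismatch is in fact inherited from the theorem statement itself (the paper's own one-line proof has the same issue), so you have identified the correct mechanism; but a complete write-up should either restate the conclusion with the $\varepsilon R^2/C$ accuracy or carry the $R^2/(C\varepsilon)$ factor in the iteration count, and should also record the implicit requirements $\tilde\varepsilon\in(0,1)$ and $\tilde\varepsilon<\Delta_0$ (equivalently $\varepsilon< F(x_0)-F^*$) needed to invoke Lemma \ref{lem:recprob}.
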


\begin{proof}
	Multiplying both sides of \eqref{eq:99} by $C/R^{2}$, we obtain:
	$$ \mathbb{E}[\Delta_{k+1} \ | \ x_{k} ]  \leq \Delta_{k}  -  \Delta_{k}^{2},$$
	 with $ \Delta_{k} =	\dfrac{C \left( F(x_{k}) - F(x^{*})\right) }{R^{2}}$. Using Theorem 1 from \cite{RicTak:11}, the statement follows.	
\end{proof}

\begin{theorem}
	\label{the:conC}
	Let $(x_{k})_{k\geq 0}$ be generated by algorithm CPG or CGD,  respectively, with $i_{k}$ chosen cyclic. If  the descent \eqref{eq:132} holds and  $F$ is  convex, then the following sublinear rate in function values holds:  
	\begin{equation*}
		F(x_{kN}) - F(x^{*}) \leq \dfrac{\left(F(x_{0}) - F(x^{*})\right) R^2 }{C\left(F(x_{0}) - F(x^{*})\right)k + R^2}. 
	\end{equation*}
\end{theorem}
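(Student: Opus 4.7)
The plan is to mimic the proof of Theorem~\ref{the:conv} almost verbatim, replacing the expectation-based recurrence by a deterministic one along the subsequence $(x_{kN})_{k\geq 0}$. The key observation is that the cyclic descent \eqref{eq:132} already groups together a full sweep of $N$ inner iterations, so viewing the algorithm on the ``outer'' scale $k \mapsto kN$ yields the same one-step recurrence as in the random case, but without any randomness.

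First I would exploit convexity of $F$ exactly as in \eqref{eq:133}: for any $x^{*} \in X^{*}$ and any $k$,
\begin{equation*}
F(x^{*}) - F(x_{kN}) \geq \langle \nabla F(x_{kN}), x^{*} - x_{kN}\rangle \geq -\|\nabla F(x_{kN})\|\,\|x_{kN} - x^{*}\| \geq -\|\nabla F(x_{kN})\|\,R,
\end{equation*}
so that $\|\nabla F(x_{kN})\| \geq (F(x_{kN}) - F(x^{*}))/R$. Substituting this bound into the cyclic descent inequality \eqref{eq:132} evaluated at index $kN$ gives
\begin{equation*}
\left(F(x_{kN}) - F(x^{*})\right) - \left(F(x_{(k+1)N}) - F(x^{*})\right) \geq \frac{C\,\left(F(x_{kN}) - F(x^{*})\right)^{2}}{R^{2}}.
\end{equation*}

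Next I would rescale. Setting $\Delta_{k} = C\,(F(x_{kN}) - F(x^{*}))/R^{2}$, which is nonnegative (and the sequence $(F(x_{kN}))_{k\geq 0}$ is nonincreasing by \eqref{eq:132}), multiplication of the previous inequality by $C/R^{2}$ yields the scalar recurrence
\begin{equation*}
\Delta_{k} - \Delta_{k+1} \geq \Delta_{k}^{2} \qquad \forall k\geq 0.
\end{equation*}
This is exactly the recurrence encountered in the proof of Theorem~\ref{the:conv}.

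Finally, I would invoke Lemma~9 of \cite{NecCho:21} (with $\zeta = 1$) to the sequence $(\Delta_{k})_{k\geq 0}$, which gives
\begin{equation*}
\Delta_{k} \leq \frac{\Delta_{0}}{\Delta_{0}\,k + 1},
\end{equation*}
and after unwinding the definition of $\Delta_{k}$ this is precisely the claimed bound
\begin{equation*}
F(x_{kN}) - F(x^{*}) \leq \frac{\left(F(x_{0}) - F(x^{*})\right) R^{2}}{C\left(F(x_{0}) - F(x^{*})\right) k + R^{2}}.
\end{equation*}
Since everything here is deterministic, there is essentially no obstacle; the only subtlety relative to the random proof is that we do not need Jensen's inequality (the mapping $t\mapsto t^{2}$ applied through an expectation), because we work with the realized values $F(x_{kN})$ directly.
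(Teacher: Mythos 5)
Your proposal is correct and follows essentially the same route as the paper's proof: both combine the convexity bound \eqref{eq:133} with the cyclic descent \eqref{eq:132} evaluated at the indices $kN$, rescale to obtain the recurrence $\Delta_{k}-\Delta_{k+1}\geq \Delta_{k}^{2}$, and conclude via Lemma 9 of \cite{NecCho:21}. Your closing remark that no Jensen step is needed in the deterministic setting is accurate and consistent with the paper's appeal to ``similar arguments as in Theorem \ref{the:conv}.''
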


\begin{proof}
\red{From inequalities \eqref{eq:132} and \eqref{eq:133},  we obtain for all $k\geq 0$:
	\vspace*{-0.2cm}
\begin{equation}
	\left( F(x_{k}) - F(x^*) \right)  - \left( F(x_{k+N}) - F(x^*) \right)   \geq C \dfrac{\left( F(x_{k}) - F(x^{*})\right)^{2}}{R^{2}}.  
		\vspace*{-0.2cm}
\end{equation}	
\noindent Considering $k=\hat{k}N$ in the inequality above, with $\hat{k} \geq 0$, we get: 	
	\vspace*{-0.2cm}
\begin{equation}
	\left( F(x_{\hat{k}N}) - F(x^*) \right)  - \left( F(x_{(\hat{k}+1)N}) - F(x^*) \right)   \geq C \dfrac{\left( F(x_{\hat{k}N}) - F(x^{*})\right)^{2}}{R^{2}}.  
		\vspace*{-0.2cm}
\end{equation}	
\noindent Define $\Delta_{\hat{k}} = F(x_{\hat{k}N}) - F_*$. Using Lemma 9 in \cite{NecCho:21} and similar arguments as in Theorem  \ref{the:conv}, we get the statement. }
\end{proof}

%%%%%%%%%%%%%%%%%%%%%%%%%%%%%%%%
%%%%%%%%%%%%%%%%%%%%%%%%%%%%%

\section{Numerical simulations}
\label{Simulations}
In the numerical  experiments we consider two applications:  the subproblem in the cubic Newton method \cite{NesPol:06} and the orthogonal matrix factorization problem \cite{AhoHie:21}. In the sequel, we describe these problems, provide some implementation details and present the numerical results. Note  that our composite problem \eqref{eq:prob} permits to handle general coupling functions $\psi(x)$,   with $x= (x_1,\cdots ,x_N)$, e.g.:  (i) $\psi(x) = \| A x\|^p$, with $p \geq 2$ and $A$ linear operator (in particular, $\psi(x_1,x_2) = \|A_1x_1 - A_2 x_2\|^p$, see \cite{LuFreNes:18}); (ii) when solving the  subproblem in higher order methods (including cubic Newton) recently popularized by Nesterov \cite{Nes:19} (where $\psi(x) = \|x\|^p$); (iii)  when  minimizing an objective function that is relatively smooth w.r.t. some (possibly unknown) function $h$, see \cite{LuFreNes:18}.

\subsection{Cubic Newton}
In the first set of experiments, we consider solving the subproblem in the cubic Newton method, an algorithm which  is supported by global efficiency estimates for general classes of optimization problems \cite{NesPol:06}. In each iteration of the cubic Newton one needs to  minimize an objective function of the form:
\vspace*{-0.2cm} 
\begin{equation}
\label{opt_prob}
\min_{x \in \mathbb{R}^{n}} F(x)   :=  \frac{1}{2}   \langle Ax, x \rangle  + \langle b, x \rangle + \frac{M}{6} \|x \|^3,
\vspace*{-0.1cm} 
\end{equation}
where $A \in \mathbb{R}^{n\times n}, b\in\mathbb{R}^{n}$ and $M>0$ are given. Note that the function   $\psi(x) = \frac{M}{6} \|x \|^3$ is  uniformly convex with $\sigma_3 = \frac{M}{4}$, but it is nonseparable and twice differentiable. Moreover, in this case $f(x) =  \langle b, x \rangle + \frac{1}{2}   \langle Ax, x \rangle$ is smooth. Hence, this problem fits into our general model \eqref{eq:prob} and we can use algorithm CPG to solve it. Moreover, this problem satisfies both conditions A.4 and A.5 from Assumption \ref{ass2}. Therefore, we can also use the algorithm  CGD  with the first stepsize choice (i.e., equation \eqref{eq:HF1}) for solving the problem \eqref{opt_prob}.    
In the simulations we use the  stopping criteria:
$\|\nabla F(x_{k})\| \leq 10^{-2}$.
In Table \ref{table:1}, ``**" means that the corresponding algorithm needs more than 5 hours to solve the problem.    For the symmetric matrix $A$, we consider the eigenvalues of $A$ ordered  as $\lambda_{1}(A) \geq \cdots \geq \lambda_{n}(A)$. 

\medskip 

\noindent \textbf{Implementation details for CPG algorithm:}
Note that  at each iteration  of CPG for solving problem \eqref{opt_prob} we need to solve a subproblem of the  form:
\begin{align*}
d_{k} = \arg \min_{d \in \mathbb{R}^{n_{i_{k}}}} \langle U_{i_{k}}^{T}\nabla f(x_{k}), d \rangle + \frac{\Hf}{2} \|d\|^2 + \frac{M}{6} \|x_{k} + U_{i_{k}}d\|^3.
\end{align*}

\noindent As proved in \cite{NecCho:21}, solving the previous subproblem is equivalent to finding a positive root of  the following  fourth order equation: 
\vspace*{-0.1cm} 
\begin{eqnarray}
&& \dfrac{M^{2}}{4} \mu^{4} + \Hf M \mu^{3} + \left(\Hf^{2} -  \dfrac{M^{2}}{4} \sum_{j \neq i_{k}} \|x_{k}^{(j)}\|^{2} \right)  \mu^{2} - \Hf M \sum_{j \neq i_{k}} \|x_{k}^{(j)}\|^{2} \mu  \nonumber \\
 && \qquad \qquad - \|\Hf x_{k}^{(i_{k})}  - (U_{i_{k}}U_{i_{k}}^{T}\nabla f(x))_{i_{k}}\|^{2} - \Hf^2 \sum_{j \neq i_{k}}\|x_{k}^{(j)}\|^{2} = 0, \label{fourth:1}
\vspace*{-0.1cm} 
\end{eqnarray}
\noindent where $x_{k} = (x_{k}^{(1)},\cdots,x_{k}^{(n)})^{T}$. Once we compute a positive root $\mu_k$, then for the update we use: 
$d_{k} = -\dfrac{2U_{i_{k}}^{T}\nabla f(x_{k})  + \mu_k M U_{i_{k}}^{T}x_{k}}{2\HfU + \mu M}$. Note that, the fourth order equation  \eqref{fourth:1}  has only one change of sign. Then, using Descarte's rule of signs \cite{Mes:82} we have that the equation \eqref{fourth:1} has only one positive root. 

\medskip 

\noindent \textbf{Implementation details for CGD algorithm:}
Note that the hessian $\nabla^2 \psi (x) = M xx^{T}/ (2\|x\|) + M/2 \|x\|I_{n}$ satisfies the following inequality $\|U_{i_{k}}^{T}\nabla^2 \psi (x)U_{i_{k}}\| \!\leq\! M \|x\|$. Thus,  condition [A.4] in Assumption \ref{ass2} holds with $p=1$ and $\Hp = M$. Moreover,    [A.5] in Assumption \ref{ass2} holds  with $L_{\psi} = M$, see \cite{Nes:08}. Therefore, we can apply the CGD method, with $\HF$ given by the first stepsize choice (i.e., equation \eqref{eq:HF1}), for solving the problem \eqref{opt_prob}. Note that according to the first stepsize choice we need  at  each iteration to  compute a positive root $\alpha_k$  of the following second order equation:  
\vspace*{-0.2cm}
\begin{equation*}
\dfrac{M}{6} \alpha^{2} + \left(  \dfrac{M}{2} \|x_{k}\| + \Hf\right) \alpha - \|U_{i_{k}}^{T} \nabla F(x_{k})\| = 0, 
	\vspace*{-0.2cm}
\end{equation*}
\noindent and then 
$\HF = \frac{M}{2}\|x_{k}\| + \frac{M}{6} \alpha_{k} + \Hf$.   We implemented the following algorithms: 

\medskip 

\noindent    1) RCPG:  CPG with random $i_{k}$  $N=n$ and $\Hf = |U^{T}_{i_k}AU_{i_k}|$.  \\
	2)  RCGD-1: CGD with random $i_{k}$, $N=n$  and $\Hf = 0.51 \cdot |U^{T}_{i_k}AU_{i_k}|$. \\
	 3) RCGD-2: CGD with random $i_{k}$, $N=n$ and $\Hf =  |U^{T}_{i_k}AU_{i_k}|$. \\
	  4) CCPG: CPG with cyclic $i_{k}$, $N=n$  and $\Hf = |U^{T}_{i_k}AU_{i_k}|$.  \\
	  5) CCGD-1: CGD with cyclic $i_{k}$, $N=n$  and $\Hf = 0.51 \cdot |U^{T}_{i_k}AU_{i_k}|$. \\
	  6) CCGD-2: CGD with cyclic $i_{k}$, $N=n$ and $\Hf =  |U^{T}_{i_k}AU_{i_k}|$. \\
	  7) GD-1: CGD algorithm with  $N=1$ and $\Hf = 0.51 \cdot |A|$. \\
	  8) GD-2: CGD with  $N=1$  and $\Hf =  |A|$. \\
	  9) Algorithm (46) in \cite{Nes:19} and gradient method proposed in \cite{CarDuc:17}.  The only difference between the method in \cite{CarDuc:17} and our variants GD-1 and GD-2 consists in  how the stepsize is defined. In the method proposed in  \cite{CarDuc:17} the stepsize is constant, while  in  our GD-1 and GD-2  the stepsizes are adaptive. \\
	  % Note that, since  CPG with N=1  recovers  algorithm (46) in \cite{Nes:19} (initially developed for convex problems), now with our results we can also apply this method to nonconvex  problems \eqref{opt_prob}.  Recall the update rule in \cite{Nes:19}:
	% \vspace*{-0.3cm}
	 %\begin{equation*}
	 %	x_{k+1} = \dfrac{2}{2H + M \mu} (Hx_{k} -Ax_{k}-b),
	 %\end{equation*}
	% \noindent with $\|A\| \leq H$ and $\mu \geq 0$ taken as the solution of the  second order equation: 
	 %\begin{equation*}
	 %	\dfrac{M}{2}\mu^2 + H \mu - \|Hx_{k} -Ax_{k}-b\| = 0. 
	 %\end{equation*} 
 	%\noindent In the implementations we use the parameter $H = \|A\|$. \\
 	%10)  Algorithm proposed in \cite{CarDuc:17}.
 	% with  the update rule: 
 	%\vspace*{-0.3cm}
 	%\begin{equation*}
 	%	x_{k+1} = (I - \eta A - \dfrac{M}{2}\eta\|x_{k}\|I)x_{k} - \eta b,
 	%\end{equation*}
 %with $0 < \eta \leq \dfrac{1}{4\|A\| + 2MR}$ and $R = \dfrac{\|A\|}{M} + \sqrt{\dfrac{\|A\|^2}{M^2} + \dfrac{2\|b\|}{M}}$.
  %In the implementations we use the parameters $\eta = 1/(4\|A\| + 2MR)$ and $H = \|A\|$.  
  %Note that 
 % The method proposed by \cite{CarDuc:17} is a gradient method with stepsize $\eta$.  
	10)  GD (line-search):  gradient method with Armijo line search from \cite{Arm:66}.\\
 	11)  RCGD (line-search): Coordinate gradient method with Armijo line-search and {random} $i_{k}$, $N=n$ ( Algorithm 2.1 in \cite{Bon:21} with  $\beta = \delta_i = \dfrac{1}{2}$) .\\
 	12)  CCGD (line-search): Coordinate gradient method with Armijo line search and cyclic $i_{k}$, $N=n$ (variant of Algorithm 2.1 in \cite{Bon:21}).

\medskip 

\noindent In the first set of experiments, the vector $b\in\mathbb{R}^{n}$ was generated from a standard normal distribution $\mathcal{N}(0,1)$ and the matrix $A \in \mathbb{R}^{n \times n}$ was generated as $A = Q^{T}BQ$, where $Q \in \mathbb{R}^{n \times n}$ is an orthogonal matrix  and $B \in \mathbb{R}^{n \times n}$  is a diagonal matrix with real  entries. Following  \cite{CarDuc:17}, the starting point is chosen as: 
\vspace*{-0.3cm}
\begin{equation*}
x_{0} = -r\dfrac{b}{\|b\|},  \text{ with } r = -\dfrac{b^{T}Ab}{M\|b\|^2} + \sqrt{\left( \dfrac{b^{T}Ab}{M\|b\|^2}\right)^2 + \dfrac{2\|b\|}{M}}.
	\vspace*{-0.2cm}
\end{equation*}
The results are presented in Table \ref{table:1}, showing the number of full iterations $k/N$ (ITER)  and CPU time in seconds (CPU).   We also report the number of function evaluations (FE) for the algorithms based on line-search. As one can see from Table \ref{table:1}, the randomized versions of our algorithms, RCGD  and RCPG, with $N=n$ are comparable and they are much faster than the cyclic counterparts or than the   algorithms in \cite{CarDuc:17}, \cite{Nes:19} and than those based on line search. Moreover, for $\Hf = 0.51 \cdot \|U^{T}_{k}AU_{k}\|$ the performance of RCGD is further improved.  From  Table \ref{table:1} one can also notice that coordinate descent methods have  better  performance on optimization problems having  the gap $\lambda_{1}(A) - \lambda_{2}(A)$  large. 

\vspace{-0.1cm}
	\scriptsize 
\begin{longtable}{| c| c | c| c| c | c | c | c |}
	\hline
	\multicolumn{8}{|c|}{ $B =$ diag($10^4$,randn(n-1,1)).} \\
	\hline
	\multicolumn{2}{|c|}{\text{n}} & \multicolumn{3}{|c|}{$10^3$} & \multicolumn{3}{|c|}{$10^4$}  \\ \hline
	\multicolumn{2}{|c|}{\text{M}} & 1 & 0.1 & 0.01 & 1 & 0.1 & 0.01  \\
	\hline
	\multirow{1}{*}{ \text{RCPG (N=n)}} & \text{ITER}  & 120  & 757  & 351 & 16 & 183  & 74 \\
	& \text{CPU} & 2.78  & 21.3  & 9.2  & 9.88 & 132.6 & 60.7 \\ \hline
	\multirow{1}{*}{\text{RCGD-1 (N=n)}} & \text{ITER} & 74 & 391  & 196 & 16 & 202  & 80 \\
	& \text{CPU} & \textbf{0.55}  & \textbf{3.42}  & \textbf{1.52} & \textbf{7.57} & \textbf{115.1}  & \textbf{43.6} \\ \hline
	\multirow{1}{*}{\text{RCGD-2 (N=n)}} & \text{ITER} & 130  & 668  & 306 & 17 & 202  & 103 \\
	& \text{CPU} & 0.98  & 5.87  & 2.43 & 7.58 & 115.3  & 67.7 \\ \hline	
	\multirow{1}{*}{ \text{CCPG (N=n)}} & \text{ITER}  & 120788 & 297971  & 377884 &  &   & \\
	& \text{CPU} & 1870.3  & 5678  & 6489.9 & ** & ** & ** \\ \hline
	\multirow{1}{*}{\text{CCGD-1 (N=n)}} & \text{ITER} & 866947  & 2188155  & 2642269 &  &   & \\
	& \text{CPU} & 1479.1  & 4410.6  & 4568.5 & ** & **  & ** \\ \hline
	\multirow{1}{*}{\text{CCGD-2 (N=n)}} & \text{ITER} & 120789  & 297973  & 377891 &  &   & \\
	& \text{CPU} & 205.4  & 595.3 & 593.6 & ** & **  & ** \\ \hline	
	\multirow{1}{*}{\text{CGD-1 (N=1)}} & \text{ITER} & 23055  & 236708  & 66166 & 7067 & 191359  & 94752 \\
	& \text{CPU} & 2.24 & 41.96  & 12.7 & 104.8 & 2743.3  & 1343.4 \\ \hline
	\multirow{1}{*}{\text{CGD-2 (N=1)}} &\text{ITER} & 45190  & 463992  & 129697 & 13850  & 375063 & 185717 \\
	& \text{CPU} & 4.3  & 81.5  & 30.05 & 203.9 & 5369.1  & 4581.3 \\\hline
	\multirow{1}{*}{\cite{CarDuc:17}} & \text{ITER} & 361383  & 3710762  & 1037241 & 110735 &   & \\
	& \text{CPU} & 54.6  & 665.4  & 217.03 & 1601.6 & **  & ** \\ \hline
	\multirow{1}{*}{\cite{Nes:19}} & \text{ITER} & 45190  & 463992  & 129697 & 13850 & 375063  & 185717 \\
	& \text{CPU} & 7.08  & 81.2  & 18.5 & 199 & 8100.5  & 4372.2 \\ \hline
		\multirow{1}{*}{\text{GD (line-search)}} & \text{ITER} & 10358  & 104810  & 29259 & 3083 &   & 42058 \\
	& \text{FE} & 135607  & 1371266  & 383168 & 40452 & **  & 550611 \\
	& \text{CPU} & 7.87  & 86.7  & 24.2 & 587.4 &   & 1349.8\\ \hline
	\multirow{1}{*}{\text{RCGD (line-search)}} & \text{ITER} & 151  & 918  & 437 & 23 & 299  & 161 \\
	& \text{FE} & 742  & 4259  & 2203 & 104 & 1201  & 674\\
	& \text{CPU} & 12.1  & 91.8 & 61.8 & 106.3 & 1405.6  & 719.4 \\ \hline
	\multirow{1}{*}{\text{CCGD (line-search)}} & \text{ITER} &   &   &  &  &   & \\
	& \text{FE} & **  & **  & **  & ** & ** & **\\
	& \text{CPU} &   &   &  &  &   & \\ \hline
	\hline
	%	\begin{center}
		\caption{Full iterations (ITER) and CPU time in seconds (CPU) for variants of algorithms CPG and CGD, algorithms in  \cite{CarDuc:17} and \cite{Nes:19} and line search based methods on cubic Newton subproblem.}
		%	\end{center}
	\label{table:1}
\end{longtable}

\vspace*{-0.6cm}	

\normalsize

\noindent In the second set of experiment we want to find the smallest eigenvalue of an indefinite matrix $A$. As proved in \cite{CarDuc:17}, if a matrix $A$ has at least one negative eigenvalue, we can use the nonconvex formulation  \eqref{opt_prob} with $b = 0$ to find the smallest eigenvalue. We compare variants of our two algorithms (CPG and CGD) with  algorithm from \cite{Nes:19} and  the power method. We consider   $A$  the matrix c-30 of group Schenk\underline{ }IBMNA from University of Florida Sparse Matrix Collection \cite{DavHu}. The dimension of this matrix is $n=5321$. We denote $\lambda_k$ the eigenvalue along the iterates. In Figure \ref{fig:eig1} we plot the error $\|Ax_{k} - \lambda_kx_{k}\|$ and the value of $\lambda_k$ along time (in seconds) for our  algorithms RCPG,  RCGD-2, GD-2,  algorithm  \cite{Nes:19} and  power method. Clearly, the randomized coordinate descent variants ($N=n$) of our two algorithms CPG and CGD have superior performance compared to e.g., power method or the algorithm in \cite{Nes:19}.

\normalsize

	\vspace{-0.1cm}

\begin{figure}[!ht]
	\centering
	\includegraphics[width=6.5cm,height=5cm]{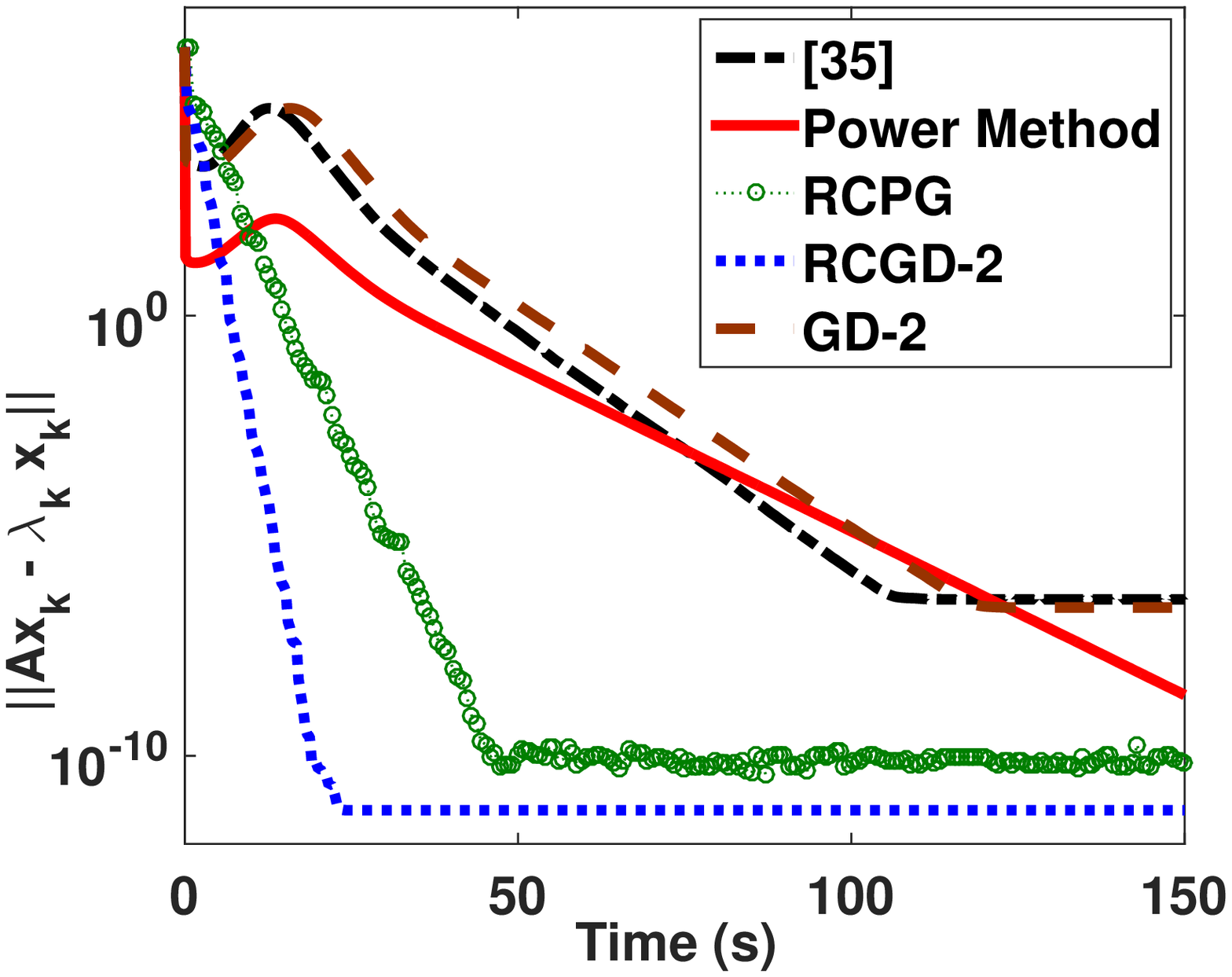}
	\hspace{-0.5cm}
	\includegraphics[width=6.5cm,height=5cm]{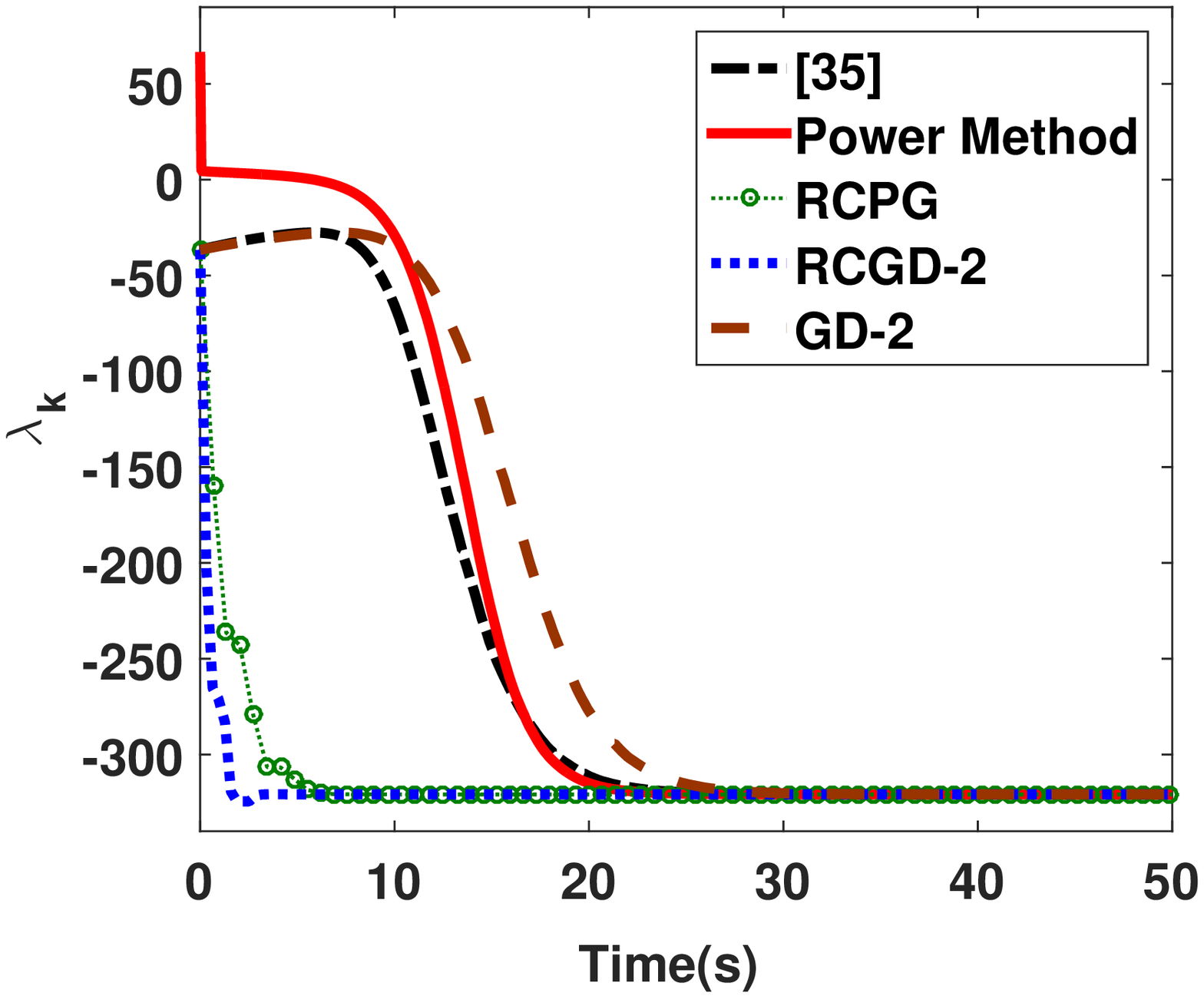} 
	\vspace*{-0.4cm}
	\caption{Behaviour along time (sec) of RCPG,  RCGD-2, GD-2,  algorithm  \cite{Nes:19} and  power method on group Schenk\underline{ }IBMNA,  matrix  c-30: left - $\|Ax_k- \lambda_k x_k\|$, right - $\lambda_k $.    }
	\label{fig:eig1}
\end{figure}

\subsection{Matrix factorization}
Finally, we consider the penalized orthogonal matrix factorization problem, see also \cite{AhoHie:21}:

\vspace{-0.3cm}

\begin{equation} 
\min_{(W,V)} F(W,V)   = \min_{(W,V)} \dfrac{1}{2} \| X - WV\|^{2}_{F} + \dfrac{ \lambda}{2} \|I-VV^{T}\|^2_{F},  \label{eq:MF}
\vspace{-0.2cm}
\end{equation}

\noindent with $W \in \mathbb{R}^{m\times r}$ and $V \in \mathbb{R}^{r \times n}$.  Let us define:
$
f(W,V) =  \dfrac{1}{2} \| X - WV\|^{2}_{F}$ and $ \psi(W,V) = \dfrac{ \lambda}{2} \|I-VV^{T}\|^2_{F}.$
\noindent Then, one can easily  compute:
\begin{equation*}
\nabla_{W} f(W,V) = WVV^{T} - XV^{T} \;\; \text{ and } \;\; \nabla^{2}_{WW} f(W,V)Z = ZVV^{T}.
\end{equation*}

\noindent Thus, $\nabla f$ is Lipschitz continuous w.r.t. $W$, with   $L_{1}(V) = \|VV^{T}\|_{F}$. Similarly: 
	\vspace*{-0.2cm}
\begin{equation*}
\nabla_{V} f(W,V) = W^{T}WV - W^{T}X \text{ and }  \nabla^{2}_{VV} f(W,V)Z = W^{T}WZ.
	\vspace*{-0.2cm}
\end{equation*}
\noindent Therefore, $\nabla f$ is also Lipschitz continuous w.r.t. $V$, with constant Lipschitz $L_{2}(W) = \|W^{T}W\|_{F}$. On the other hand, $\nabla_{V} \psi (W,V) = 2 \lambda (VV^{T}V - V)$ and thus
	\vspace*{-0.2cm}
\begin{equation*}
\nabla^2_{VV} \psi (W,V)Z = 2 \lambda (ZV^{T}V + VZ^{T}V + VV^{T}Z - Z). 
	\vspace*{-0.2cm}
\end{equation*}
\noindent Therefore, we get the following bound on the Hessian:
	\vspace*{-0.2cm}
\begin{equation*}
\langle Z, \nabla^2_{V} \psi (W,V)Z \rangle = \langle Z, 2 \lambda (ZV^{T}V + VZ^{T}V + VV^{T}Z - Z) \rangle \leq 6\lambda\|Z\|_{F}^{2} \|V\|_{F}^{2}. 
	\vspace*{-0.2cm}
\end{equation*}

\noindent This shows that $ \nabla^2_{V} \psi (\cdot)$ satisfies condition [A.4] in Assumption \ref{ass2}, with $p=2$ and $H_{\psi} = 6\lambda$. Note that if we assume that there exist $\bar{L}_{1}, \bar{L}_{2} > 0 $ such that $L_{1}(V) \leq \bar{L}_{1}$ and $L_{2}(W) \leq \bar{L}_{2}$, then   Lemmas \ref{lem:desRCGD2} and \ref{lem:RCGD2} are still valid. 
Therefore, we can solve  problem \eqref{eq:MF} using algorithm CGD with the second stepsize choice (i.e., equation \eqref{eq:HF2}) to update $V$. Moreover, since $\nabla F$ is Lipschitz continuous w.r.t. $W$, we can use algorithm CGD  to also update $W$. Since we have only 2 blocks we consider the cyclic variant of CGD, named CCGD. Thus, the iterations of algorithm  CCGD are: 
	\vspace*{-0.2cm}
\begin{eqnarray*}
	W_{k+1} &=& W_{k} - \dfrac{1}{\HfW(V_{k})}(W_{k}V_{k}V_{k}^{T} - XV_{k}^{T}), \\
	V_{k+1} &=& V_{k} - \dfrac{1}{\HF}\left( W_{k+1}^{T}W_{k+1}V_{k} - W_{k+1}^{T}X + 2 \lambda (V_{k}V^{T}_{k}V_{k} - V_{k})\right), 
\end{eqnarray*}

\noindent with $\HfW(V_{k}) > \dfrac{L_{1}(V_{k})}{2}$, $\HfV(W_{k+1}) > \dfrac{L_{2}(W_{k+1})}{2}$,   $\HF = 12 \lambda \|V_{k}\|_{F}^2 + 12\lambda \alpha_{k}^2 + \HfV(W_{k+1})$ and $\alpha_{k}$ is the positive root of the following third order equation:
	\vspace*{-0.2cm}
\begin{equation*}
12 \lambda \alpha^{3} + \left( 12\lambda  \|V_{k}\|_{F}^2  + \HfV(W_{k+1})\right) \alpha - \|\nabla_{V} f(W_{k+1},V_{k}) +\nabla_{V} \psi(W_{k+1},V_{k}) \|_{F} = 0.
	\vspace*{-0.2cm}
\end{equation*}

\noindent In our experiments, in CCGD-1 we take $\HfW(V_{k}) =  0.51 \cdot L_{1}(V_{k})$ and $\HfV(W_{k+1}) = 0.51 \cdot L_{2}(W_{k+1})$, while in CCGD-2 we take  $\HfW(V_{k}) = L_{1}(V_{k})$ and $\HfV(W_{k+1}) = L_{2}(W_{k+1})$. We compare the two variants of CCGD algorithm with the algorithm BMM in \cite{HiePha21}. For problem \eqref{eq:MF}, BMM is a Bregman type gradient descent method having computational cost per iteration comparable to our method. For numerical tests, we consider SalinasA and Indian Pines data sets from \cite{DataSet}. Each row of matrix $X$ is a vectorized image at a given band of the data set. Each image is normalized to  $[-1,1]$. The starting matrix  $W_{0}$ is generated from a standard normal distribution $\mathcal{N}(0,1)$ and the matrix $V_{0}$ is generated with orthogonal rows. Moreover, we take $\lambda = 1000$ and the dimension $r$ is taken as  in \cite{DataSet} (i.e.,  in  SalinasA  we take $r=6$ and in Indian Pines we choose $r = 16$). We run all the algorithms for $100$s. The results are displayed in Figures \ref{fig:1} (SalinasA) and \ref{fig:2} (Indian Pines), where we plot  the evolution of  function values (left) and the orthogonality error $O_{\text{error}} = \|I - V_{k}V_{k}^{T}\|_{F}$ (right) along time (in seconds). Note that in terms of function values  CCGD is competitive with algorithm BMM. However, our algorithm  identifies orthogonality faster than BMM. 

\begin{figure}[!ht]
\centering
	\includegraphics[width=1\textwidth,height=4.3cm]{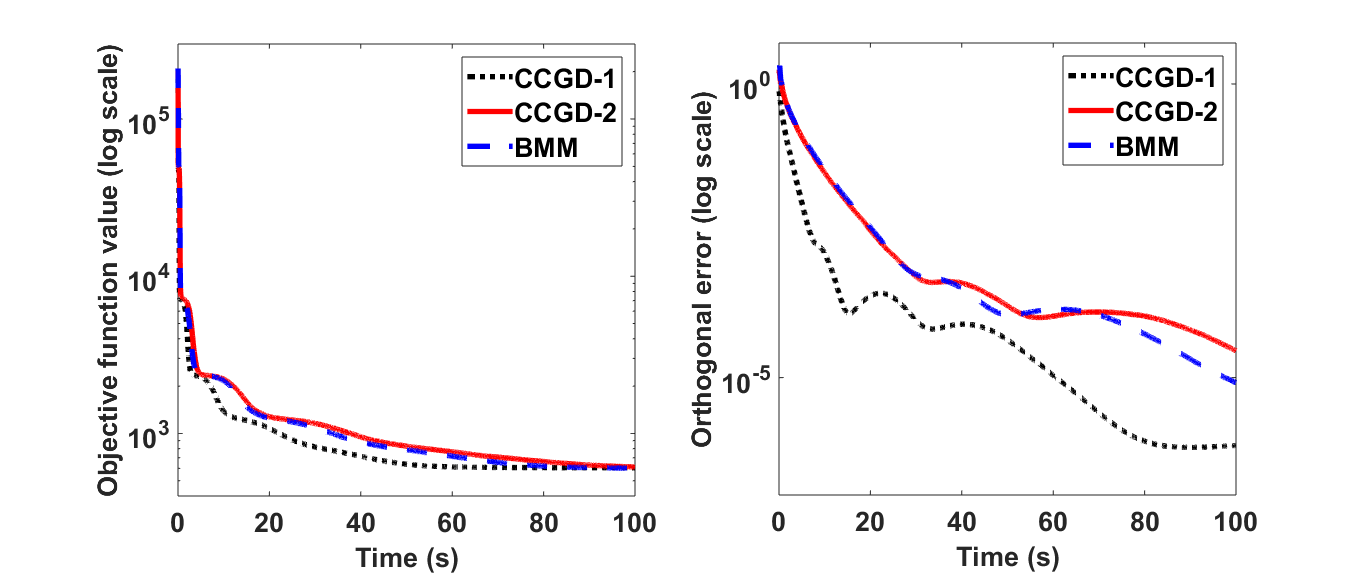} 
		\vspace*{-0.7cm}
\caption{CCGD and BMM on SalinasA: left - function values, right - orthogonality error }
\label{fig:1}
\end{figure}
\vspace*{-0.7cm}
\begin{figure}[!ht]
	\centering
	\includegraphics[width=1\textwidth,,height=4.3cm]{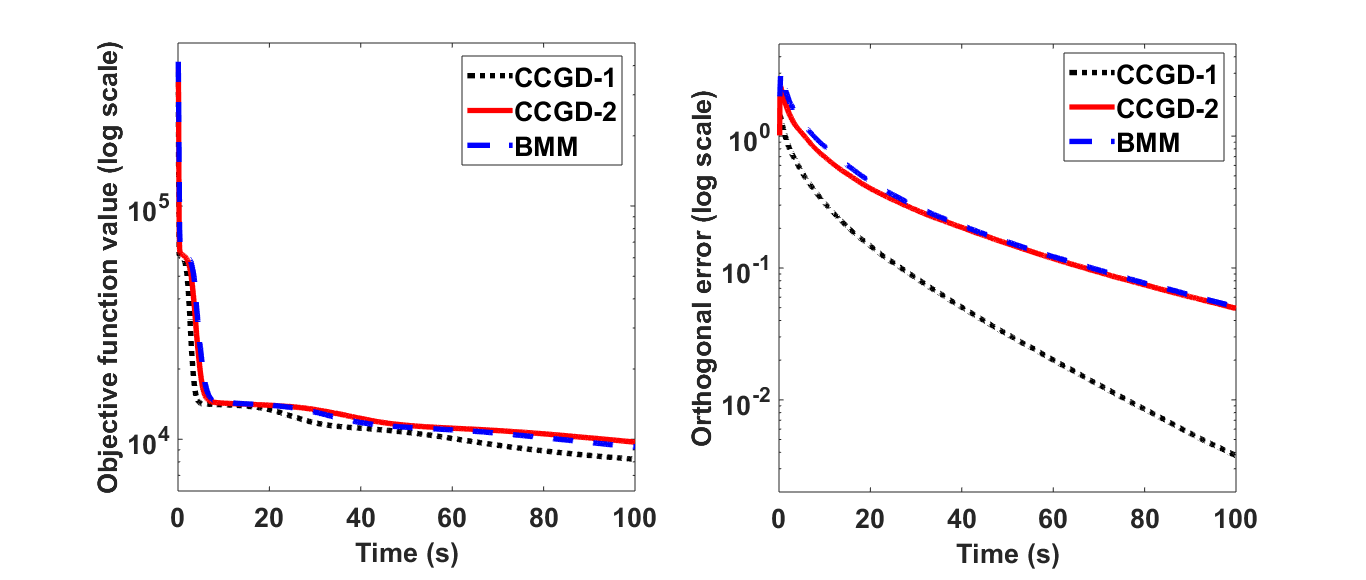} 
		\vspace*{-0.7cm}
	\caption{CCGD and BMM on Indian Pines: left - function values, right - orthogonality error.}
	\label{fig:2}
\end{figure}

%%%%%%%%%%%%%%%%%%%%%%%%%%%%%%%%%%%%
%%%%%%%%%%%%%%%%%%%%%%%%%%%%%%%%%%%%

\section{Conclusions}
In this paper we have considered composite problems having  the  objective  formed as a sum of two  terms, one smooth  and the other twice differentiable, both possibly nonconvex and nonseparable. For solving this  problem we have proposed two algorithms,  a coordinate proximal gradient   method and a coordinate gradient descent method,  respectively. For the second algorithm we have designed several novel  adaptive stepsize strategies which  guarantee descent. For both algorithms we derived  convergence bounds in both convex and nonconvex settings.   Preliminary numerical results confirm the  efficiency of our  algorithms on real applications.

\medskip
\noindent \textbf{Acknowledgements.}
The research leading to these results has received funding from: TraDE-OPT funded by the European Union’s Horizon 2020 Research and Innovation Programme under the Marie Skłodowska-Curie grant agreement No.  861137; 
%NO Grants 2014-2021, under project ELO-Hyp,  no. 24/2020;  
UEFISCDI PN-III-P4-PCE-2021-0720, under project L2O-MOC, nr.  70/2022. % 

\bibliographystyle{siamplain}
\bibliography{references}

\begin{thebibliography}{100}
%\bibitem{BauCom:11}
%H.  Bauschke and P. Combettes,  \emph{  Convex Analysis and Monotone Operator Theory in Hilbert Spaces},
%Springer, 2011. 

\bibitem{AbeBec:21}
A. Aberdam and A. Beck \emph{An Accelerated Coordinate Gradient Descent Algorithm for Non-separable Composite Optimization}, J. Opt. Theory and Appl., 193:  219--246  , 2021. 

\bibitem{Arm:66}
L. Armijo. \emph{Minimization of functions having lipschitz continuous first partial derivatives.} Pacific J. Math., 16(1): 1–3, 1966.

\bibitem{AhoHie:21}
M. Ahookhosh, L.T. K. Hien, N. Gillis and P. Patrinos, \emph{ Multi-block Bregman proximal alternating linearized minimization and its application to orthogonal nonnegative matrix factorization}, Computational Optimization and Application, 79:  681--715, 2021.

\bibitem{Bec:14}
A.  Beck, \emph{On the convergence of alternating minimization for convex programming with applications to iteratively reweighted least squares and decomposition schemes}, SIAM Journal on Optimization, 25(1): 185--209, 2014.


\bibitem{BecTet:13}
A.  Beck  and  L. Tetruashvili,  \emph{On the convergence of block coordinate descent type methods}, SIAM Journal on Optimization, 23(4): 2037--2060, 2013.

\bibitem{Ber:99}
D.  Bertsekas, \emph{Nonlinear Programming}, Athena Scientific,  1999.

\bibitem{BolDan:07}
J. Bolte, A. Daniilidis and A. Lewis, \emph{The Łojasiewicz inequality for nonsmooth subanalytic functions with applications to subgradient dynamical systems},  SIAM Journal on Optimization, 17: 1205--1223, 2007.

\bibitem{BolSab:14}
J. Bolte,  S. Sabach and M. Teboulle,  \emph{Proximal alternating linearized minimization for nonconvex and nonsmooth problems}, Mathematical Programming,  146:  459--494, 2014. 

\bibitem{Bon:21}	
S. Bonettini, \emph{Inexact block coordinate descent methods with application to non-negative matrix factorization}, IMA Journal of Numerical Analysis, 31:  1431--1452, 2021.

%\bibitem{CarRob:21}
%C. Cartis and L. Roberts,  \emph{Scalable subspace methods for derivative-free nonlinear least-squares optimization}, arxiv preprint:  2102.12016,  2021.

\bibitem{CarDuc:17}
Y.  Carmon  and  J.C. Duchi, \emph{Gradient descent efficiently finds the cubic  regularized nonconvex Newton step}, SIAM Journal on Optimization, 29(3):  2146--2178, 2019. 

%\bibitem{ChuLu:02}
%F. Chung and  L.  Lu, \emph{Connected components in random graphs with given expected degree sequences}, Annals of Combinatorics,  6: 125--145, 2002.

\bibitem{Dav:16}
D. Davis,  \emph{The asynchronous PALM algorithm for nonsmooth nonconvex problems},  arXiv preprint:  1604.00526, 2016.

\bibitem{DavHu}
T.A. Davis and Y. Hu. \emph{The University of Florida Sparse Matrix Collection}, ACM Transactions on Mathematical Software 38(1): 1--25, 2011.


%\bibitem{DriTanDavSch:21}
%D. Driggs, J. Tang, J. Liang, M. Davies and C.-B. Schönlieb,  \emph{SPRING: A fast stochastic proximal alternating method for non-smooth non-convex optimization},  arXiv preprint: 2002.12266, 2021.

%\bibitem{Euler:84}
%L.  Euler, \emph{Of a new method of resolving equations of the fourth degree},  Elements of Algebra, Springer, 1984.

%\red{\bibitem{FerBia:19}
%O. Fercoq and P. Bianchi, \emph{A coordinate-descent primal-dual algorithm with large step size and possibly nonseparable functions}, SIAM Journal on Optimization, 29(1): 100--134, 2019.} 

\bibitem{FerRic:15}
O. Fercoq and P. Richtarik. \emph{Accelerated, parallel and proximal coordinate descent}, SIAM Journal on Optimization, 25(4): 1997--2023, 2015.


\bibitem{FriHasHofTib}
J. Friedman, T. Hastie, H. Hofling and R. Tibshirani, \emph{Pathwise coordinate optimization}, The Annals of Applied Statistics: 1(2): 302--332, 2007.

%\bibitem{GraRoy:15}
%S. Gratton, C.W. Royer, L.N. Vicente and Z.  Zhang, \emph{Direct search based on probabilistic descent}, SIAM Journal on Optimization, 25: 1515--1541, 2015.

\bibitem{GriSci:00}
L. Grippo and  M. Sciandrone, \emph{On the convergence of the block nonlinear Gauss-Seidel method under convex constraints}, Operations Research Letters, 26(3): 127--136, 2000.

\bibitem{GriIut:21}
D. Grishchenko,  F. Iutzeler and  J. Malick \emph{Proximal gradient methods with adaptive subspace sampling}, Mathematics of Operations Research,  46(4): 1235--1657, 2021.


\bibitem{HanKon:18}
  F. Hanzely, K.  Mishchenko and P. Richtarik, \textit{SEGA: Variance Reduction via Gradient Sketching},  Advances in Neural Information Processing Systems, 31, 2018. 

%\bibitem{HanDoi:20}
%F. Hanzely, N. Doikov, P. Richtárik and Yu. Nesterov, \emph{Stochastic subspace cubic Newton method},  International Conference on Machine Learning, PMLR 119, 2020.

\bibitem{HiePha21}
L.T.K. Hien, D.N.  Phan, N. Gillis, M. Ahookhosh and P. Patrinos \emph{Block  bregman majorization minimization with extrapolation},  SIAM Jounal on Optimization, 4(1):  1 -- 25, 2022

\bibitem{LatThe:21}
P. Latafat,  A. Themelis  and  P. Patrinos \emph{Block-coordinate and incremental aggregated proximal	gradient methods for nonsmooth nonconvex problems}, Math. Prog., 193: 195 -- 224, 2021. 


\bibitem{LuFreNes:18}
H. Lu,  R.M. Freund and Yu. Nesterov,  \emph{Relatively Smooth Convex Optimization by First-Order Methods and Applications},  SIAM Journal on Optimization, 28(1): 333--354, 2018.  

\bibitem{LuXia:14}
Z. Lu and L. Xiao,  \emph{On the complexity analysis of randomized block-coordinate
	descent methods}, Mathematical Programming, 152(1-2): 615--642, 2015.


\bibitem{MauFadAtt:22}
R. Maulen,  S.J. Fadili and H. Attouch,  \emph{An SDE perspective on stochastic convex optimization}, arXiv prepint: 2207.02750, 2022.   

\bibitem{Mes:82}
B. E. Meserve,  \emph{Fundamental Concepts of Algebra}, Dover, New York, 1982.


\bibitem{Mit:97}
T. Mitchell, \emph{Machine Learning},  McGraw Hill, 1997. 

%\bibitem{Mah:16}
%M. Mahoney, \textit{Lecture notes on randomized linear algebra}, arXiv preprint: 1608.0448, 2016. 

\bibitem{Nec:13}
I. Necoara, \emph{Random coordinate descent algorithms for multi-agent convex optimization over networks}, IEEE Transactions on Automatic Control, 58(8):  2001--2012, 2013.

\bibitem{NecCho:21}
I. Necoara and  F. Chorobura, \emph{Efficiency of stochastic coordinate  proximal gradient  methods on nonseparable composite  optimization}, arXiv preprint: 2104.13370, 2021.

\bibitem{NecCli:13}
I. Necoara and  D. Clipici, \emph{Efficient parallel coordinate descent algorithm for convex optimization problems with separable constraints: Application to distributed MPC}, Journal of Process Control,  23(3):  243--253,  2013.

\bibitem{NecCli:16}
I. Necoara and D. Clipici, \emph{Parallel random coordinate descent methods for composite minimization: convergence analysis and error bounds}, SIAM J. Opt., 26(1): 197--226, 2016. 

%\bibitem{NecPat:14}
%I. Necoara and A. Patrascu, \emph{A random coordinate descent algorithm for optimization problems with composite objective function and linear coupled constraints}, Computational  Optimization and  Applications, 57: 307--337, 2014.

\bibitem{NecTak:20}
I. Necoara and  M. Takac, \textit{Randomized sketch descent methods for non-separable linearly constrained optimization},  IMA Journal of Numerical Analysis, 41(2): 1056--1092, 2021. 

\bibitem{Nes:08}
Yu. Nesterov, \emph{Accelerating the cubic regularization of Newton’s
	method on convex problems}, Mathematical Programming, 112: 159--181, 2008.


\bibitem{Nes:10}
Yu.  Nesterov, \emph{Efficiency of coordinate descent methods on huge-scale optimization problems}, SIAM Journal on Optimization:  22(2): 341--362,  2012.

%\bibitem{Nes:13}
%Yu. Nesterov, \emph{Gradient methods for minimizing composite objective function}, Mathematical Programming, 140: 125--161, 2013.

%\bibitem{Nes:04}
%Yu. Nesterov, \emph{Introductory lectures on convex optimization: a basic course}, Springer, 2004.


\bibitem{Nes:19}
Yu. Nesterov, \emph{Inexact basic tensor methods for some classes of convex optimization problems}, Optimization Methods and Software,  doi: 10.1080/10556788.2020.1854252, 2020.

\bibitem{NesPol:06} 
Yu.  Nesterov and  B.T.  Polyak, \emph{Cubic regularization of Newton method and its global performance}, Mathematical Programming, 108: 177--205, 2006. 

\bibitem{NesSti:17}
Yu. Nesterov and S.U. Stich, \emph{Efficiency of the accelerated coordinate descent method on structured optimization problems}, SIAM Journal on Optimization, 27(1): 110–123, 2017.

\bibitem{Pow:73}
M.J.D. Powell, \emph{On search directions for minimization algorithms}, Mathematical Programming, 4: 193–201, 1973.

\bibitem{RicTak:11}
P.  Richtarik  and  M. Takac, \emph{Iteration complexity of randomized block-coordinate descent methods for minimizing a composite function}, Mathematical Programming, 144: 1--38, 2014.

\bibitem{RobSie:71}
H. Robbins, and D. Siegmund, \emph{A convergence theorem for non-negative almost supermartingales and some applications},  Optimizing Methods in Statistics, 233–257, 1971.
%\bibitem{RocWet:98}
%R. T. Rockafellar and R. J-B Wets, \emph{Variatonial Analysis}, Springer, 1998.
\red{\bibitem{SteSha:05}
E. M. Stein and R. Shakarchi, \emph{Real Analysis: Measure Theory, Integration, and Hilbert Spaces}, Princenton University Press, 2005.}

%\bibitem{TseYun::09}
%P. Tseng and S. Yun, \emph{A coordinate gradient descent method for nonsmooth separable minimization}. Math. Program. 117: 387--423, 2009. DOI: 10.1007/s10107-007-0170-0

\bibitem{TseYun:09}
P. Tseng and  S. Yun, \emph{Block-coordinate gradient descent method for linearly constrained nonsmooth separable optimization}, Journal of Optim.  Theory and  Applications,  140, 2009. 

\bibitem{Wri:15}
S.J. Wright, \emph{Coordinate descent algorithms}, Mathematical Programming,  151(1): 3--34, 2015.

\bibitem{DataSet}
 \url{http://www.ehu.eus/ccwintco/index.php/Hyperspectral_Remote_Sensing_Scenes. }
\end{thebibliography}

%%%%%%%%%%%%%%%%%%%%%%%%%%%%%%%%

\section*{Appendix}	
\label{apendix}

\blue{ Recall that $\mathbbm{1}_{A}$  denotes the indicator function of the set $A$. 
\begin{lemma}
	\label{lem:prob}
	Let   $(X_{k})_{k \geq 0}$ be a sequence of random variables  on a  probability space ($\Omega$,$\mathcal{F}$,$P$).   Assume that exists  $C_{0} >  0$  such that $0 \leq X_{k}  \leq C_{0}$ with probability one for all $k \geq 0$. Let $\delta > 0$ and a measurable set   $\Omega_{\delta} \subset \Omega$ such that $P(\Omega_{\delta}) \geq 1- \delta$. Then:
	\vspace*{-0.1cm}
	\begin{equation}
	\mathbb{E}[X_{k}] - C_{0} \sqrt{\delta} \leq \mathbb{E}[X_{k} \mathbbm{1}_{\Omega_{ \delta}}] \leq \mathbb{E}[X_{k}] \quad  \forall k \geq 0.   \label{eq:136}
	\end{equation}	
\end{lemma}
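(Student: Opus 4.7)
The plan is to prove the two inequalities separately, both of which follow from elementary properties of expectation applied to the decomposition $\Omega = \Omega_\delta \cup (\Omega \setminus \Omega_\delta)$.

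For the upper bound, I would use the non-negativity assumption $X_k \geq 0$ together with $\mathbbm{1}_{\Omega_\delta} \leq 1$ pointwise, so that $X_k \mathbbm{1}_{\Omega_\delta} \leq X_k$ almost surely; monotonicity of expectation then yields $\mathbb{E}[X_k \mathbbm{1}_{\Omega_\delta}] \leq \mathbb{E}[X_k]$.

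For the lower bound, I would decompose
$$\mathbb{E}[X_k] = \mathbb{E}[X_k \mathbbm{1}_{\Omega_\delta}] + \mathbb{E}[X_k \mathbbm{1}_{\Omega \setminus \Omega_\delta}],$$
so it suffices to bound the residual term by $C_0 \sqrt{\delta}$. Since $0 \leq X_k \leq C_0$ almost surely, an application of the Cauchy--Schwarz inequality gives
$$\mathbb{E}[X_k \mathbbm{1}_{\Omega \setminus \Omega_\delta}] \leq \sqrt{\mathbb{E}[X_k^2]}\, \sqrt{\mathbb{E}[\mathbbm{1}_{\Omega \setminus \Omega_\delta}^2]} \leq C_0 \sqrt{P(\Omega \setminus \Omega_\delta)} \leq C_0 \sqrt{\delta},$$
where the last step uses $P(\Omega \setminus \Omega_\delta) = 1 - P(\Omega_\delta) \leq \delta$. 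Rearranging, $\mathbb{E}[X_k] - C_0 \sqrt{\delta} \leq \mathbb{E}[X_k \mathbbm{1}_{\Omega_\delta}]$, completing the proof.

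There is no substantive obstacle here: the argument is routine and relies only on the uniform bound $X_k \leq C_0$ and the measure estimate on $\Omega_\delta^c$. I note that a direct bound $X_k \mathbbm{1}_{\Omega \setminus \Omega_\delta} \leq C_0 \mathbbm{1}_{\Omega \setminus \Omega_\delta}$ would actually give the sharper constant $C_0 \delta$ in place of $C_0 \sqrt{\delta}$, but the weaker $\sqrt{\delta}$ form is what is invoked in the proof of Theorem \ref{theo:KL1} (where it arises naturally from Egorov-type almost-uniform convergence arguments), so I would keep the Cauchy--Schwarz form for consistency with the rest of the paper.
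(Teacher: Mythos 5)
Your proof is correct and follows essentially the same route as the paper: the same decomposition over $\Omega_\delta$ and its complement, the same Cauchy--Schwarz bound $\mathbb{E}[X_k \mathbbm{1}_{\Omega \setminus \Omega_\delta}] \leq C_0\sqrt{\delta}$, and nonnegativity for the upper bound. Your side remark that a direct pointwise bound would yield the sharper constant $C_0\delta$ is accurate but does not change the argument.
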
}
	\vspace*{-0.1cm} 
\begin{proof}	
\red{  Following an argument as in Lemma A.5 from \cite{MauFadAtt:22}, we have: 
	$	X_{k} = X_{k} \mathbbm{1}_{\Omega_{\delta}} +  X_{k}  \mathbbm{1}_{ \Omega \backslash  \Omega_{\delta}}.
$ This implies that: 
	\vspace*{-0.1cm} 
	\begin{equation}
		\mathbb{E}[  X_{k} ] = \mathbb{E}[  X_{k}  \mathbbm{1}_{\Omega_{\delta}}  ] + \mathbb{E}[X_{k}  \mathbbm{1}_{ \Omega \backslash  \Omega_{\delta}}]. \label{eq:134}
	\end{equation}	
	\noindent Using \blue{Cauchy- Schwarz} inequality,  $X_{k} \leq C_{0}$, and $\mathbb{E}[\mathbbm{1}_{ \Omega \backslash  \Omega_{\delta} }] = P(\Omega \backslash  \Omega_{\delta}) \leq \delta$, we get:
	\begin{equation}
		\mathbb{E}[X_{k} \mathbbm{1}_{ \Omega \backslash  \Omega_{\delta}}] \leq \sqrt{ \mathbb{E}[X_{k}^2]   } \sqrt{\mathbb{E}[\mathbbm{1}^2_{ \Omega \backslash  \Omega_{\delta}}]} \leq C_{0} \sqrt{\delta}. \label{eq:135}
	\end{equation}	
	\noindent From \eqref{eq:134} and \eqref{eq:135}, we get the left hand side in \eqref{eq:136}. Moreover, since $X_{k} \geq 0$, we have $\mathbb{E}[X_{k} \mathbbm{1}_{ \Omega \backslash  \Omega_{\delta}}] \geq 0$ and using \eqref{eq:134}, we get the right hand side in \eqref{eq:136}.} 
\end{proof}

\medskip

\textit{Proof of Lemma \ref{lemma:MVI}.} Consider a vector $u \in \mathbb{R}^{m}$, with $\|u\| =1$ and the parameterization $\alpha_{u}: [0,1] \to \mathbb{R}$ defined as	$\alpha_{u}(t) = \langle G\left(x+tUd\right) , u \rangle$.  From  mean value theorem, there exists $\bar{t} \in [0,1]$ such  that  	$\alpha_{u}(1) - \alpha_{u}(0) = \alpha_{u}'(\bar{t}) $.  This implies:
$
\langle G\left(x+Ud\right) - G\left(x\right)   , u \rangle  = \langle J\left( x+\bar{t}Ud\right)Ud, u \rangle. 	
$
 Hence 
 	\vspace*{-0.2cm}
\begin{equation*}
|\langle G\left(x+Ud\right) - G\left(x\right)   , u \rangle| \leq \| J\left( x+\bar{t}Ud\right)Ud\| \|u\| \leq  \| J\left( x+\bar{t}Ud\right)U\| \|d\| \|u\|.	
	\vspace*{-0.2cm}
\end{equation*}

\noindent If we define $y = x+\bar{t}Ud$ and take $u = \dfrac{G\left(x+Ud\right) - G\left(x\right)}{\|G\left(x+Ud\right) - G\left(x\right)\| } $, we get the statement.      \qed

\medskip
\textit{Proof of Lemma \ref{lem:recprob}.} We use  a similar definition as in \cite{RicTak:11}, i.e., let $\{\Delta_{k}^{\epsilon}\}_{k\geq 0}$ be the following sequence:
	\vspace*{-0.2cm}
	\begin{equation*}
	\Delta_{k}^{\epsilon} = \left\lbrace\begin{array}{ll} \Delta_{k}\; 
	\text{ if } \; \Delta_{k} \geq \epsilon, \quad \text{satisfies} \quad \Delta_{k}^{\epsilon} \leq \epsilon \iff \Delta_{k} \leq \epsilon \quad \forall k \geq \bar{k}. \\
	0 \; \text{ otherwise,}   
	\end{array}\right. 
\vspace*{-0.3cm}
	\end{equation*} 	
	
	\noindent Therefore, from Markov inequality, we have
	$\mathbb{P}\left[\Delta_{k} > \epsilon \right]  = \mathbb{P}\left[ \Delta_{k}^{\epsilon} > \epsilon \right]  \leq \dfrac{\mathbb{E}[\Delta_{k}^{\epsilon}]}{\epsilon}.$ Hence, it suffices to show that 
	$\theta_{K} \leq \epsilon\rho$, \noindent where $\theta_{k}:=\mathbb{E}[\Delta_{k}^{\epsilon}]$. If \eqref{eq:84} holds, then
		\vspace*{-0.2cm}
	\begin{equation*}
	\mathbb{E} [\Delta_{k+1}^{\epsilon} ] \leq \mathbb{E} [\Delta_{k}^{\epsilon}] - \mathbb{E} [ \Delta_{k}^{\epsilon}]^{\zeta + 1}, \quad
	\mathbb{E} [\Delta_{k+1}^{\epsilon}] \leq \left( 1 - \epsilon^{\zeta}\right) \mathbb{E} [\Delta_{k}^{\epsilon}]. 
		\vspace*{-0.2cm}
	\end{equation*}
Hence, we obtain $\theta_{k+1} \leq \theta_{k} - \theta_{k}^{\zeta+1} $ and $\theta_{k+1} \leq \left( 1 - \epsilon^{\zeta}\right)  \theta_{k} $. Using now the inequality (28) of Lemma 9 in \cite{NecCho:21}, we get 	$\left( k-\bar{k}\right) \zeta \leq \theta_{k}^{-\zeta} -  \theta_{\bar{k}}^{-\zeta}$. Therefore, if we let $k_{1} \geq \dfrac{1}{\zeta}\left(\dfrac{1}{\epsilon^{\zeta}}  - \dfrac{1}{ \Delta_{0}^{\zeta}}\right) + \bar{k}$, we obtain $\theta_{k_{1}} \leq \epsilon$. Finally, letting, $k_{2} \geq \dfrac{1}{\epsilon^{ \zeta}}\log\dfrac{1}{\rho}$, we have:
	\begin{equation*}
	\theta_{K} \leq \theta_{k_{1} + k_{2}} \leq \left( 1 - \epsilon^{\zeta}\right)^{k_{2}}  \theta_{k_1} \leq \left( (1-\epsilon^{\zeta})^{\frac{1}{\epsilon^{ \zeta}}}\right)^{\log\frac{1}{\rho}} \epsilon \leq \left( e^{-1} \right)^{\log\frac{1}{\rho}} \epsilon = \epsilon \rho, 
		\vspace*{-0.2cm}
	\end{equation*}
which proves our statement.

\end{document}